\documentclass[reqno]{amsart}  
\usepackage{amsmath,amssymb,amsthm}
\theoremstyle{plain}
\newtheorem{theorem}{Theorem}[section]

\newtheorem{remark}[theorem]{Remark}
\newtheorem{proposition}[theorem]{Proposition}

\newtheorem{corollary}[theorem]{Corollary}
\newtheorem{example}[theorem]{Example}

\numberwithin{equation}{section}
\allowdisplaybreaks[1]

\theoremstyle{definition}

\newtheorem{definition}[theorem]{Definition}

\theoremstyle{remark}

\newcommand{\sbm}[1]{\left[\begin{smallmatrix} #1
        \end{smallmatrix}\right]}

\newcommand{\cB}{{\mathcal B}}
\newcommand{\cC}{{\mathcal C}}
\newcommand{\cD}{{\mathcal D}}

\newcommand{\cG}{{\mathcal G}}
\newcommand{\cH}{{\mathcal H}}

\newcommand{\cJ}{{\mathcal J}}
\newcommand{\cK}{{\mathcal K}}
\newcommand{\cL}{{\mathcal L}}
\newcommand{\cM}{{\mathcal M}}

\newcommand{\cS}{{\mathcal S}}

\newcommand{\cU}{{\mathcal U}}

\newcommand{\cX}{{\mathcal X}}
\newcommand{\cY}{{\mathcal Y}}

\newcommand{\be}{{\mathbf e}}

\newcommand{\bA}{{\mathbf A}}
\newcommand{\bB}{{\mathbf B}}
\newcommand{\bC}{{\mathbf C}}
\newcommand{\bD}{{\mathbf D}}
\newcommand{\bU}{{\mathbf U}}
\newcommand{\bV}{{\mathbf V}}
\newcommand{\bY}{{\mathbf Y}}

\numberwithin{equation}{section}

\usepackage{amsfonts}

\begin{document}

\title[Schur--Agler and Herglotz--Agler classes]{Schur--Agler and
Herglotz--Agler classes of functions: positive-kernel
decompositions and transfer-function realizations}
\author[J.A.~Ball]{Joseph A. Ball}
\address{Department of Mathematics \\ Virginia Tech \\
Blacksburg, VA, 24061} \email{joball@math.vt.edu}
\author[D.S.~Kaliuzhnyi-Verbovetskyi]{Dmitry S.
Kaliuzhnyi-Verbovetskyi}\thanks{The authors were partially
supported by US--Israel BSF grant 2010432. The second author was
also partially supported by NSF grant DMS-0901628, and wishes to
thank the Department of Mathematics at Virginia Tech for
hospitality during his sabbatical visit in January--May 2013, when
a significant part of work on the paper was done.}
\address{Department of Mathematics \\
Drexel University\\
3141 Chestnut St.\\
  Philadelphia, PA, 19104}
\email{dmitryk@math.drexel.edu}
\date{}

\begin{abstract} We discuss transfer-function realization for
    multivariable holomorphic functions
    mapping the unit polydisk or the right polyhalfplane into the operator analogue of either the unit
    disk or the right halfplane (Schur/Herglotz functions over
    either the unit polydisk or the right polyhalfplane) which
    satisfy the appropriate stronger contractive/positive real part
    condition for the values of these functions on commutative
    tuples of strict contractions/strictly accretive operators
    (Schur--Agler/Herglotz--Agler functions over either the unit polydisk
    or the right polyhalfplane).  As originally shown by Agler, the
    first case (polydisk to disk) can be solved via unitary
    extensions of a partially defined isometry constructed in a
    canonical way from a kernel decomposition for the function (the
   {\em  lurking-isometry method}).  We show how a geometric reformulation of the
     lurking-isometry method (embedding of a given isotropic subspace of a Kre\u{\i}n
    space into a Lagrangian subspace---the {\em lurking-isotropic-subspace
    method}) can be used to handle the second two cases
    (polydisk to halfplane and polyhalfplane to disk), as well as the last
    case (polyhalfplane to halfplane) if an additional
    growth condition at $\infty$ is imposed.  For the general fourth
    case, we show how a linear-fractional-transformation change of
    variable can be used to arrive at the appropriate symmetrized
    nonhomogeneous Bessmertny\u{\i} long-resolvent realization.  We
    also indicate how this last result recovers the classical
    integral representation formula for scalar-valued holomorphic
    functions mapping the right halfplane into itself.
\end{abstract}

\subjclass[2010]{32A10; 47A48; 47A56} \keywords{Schur--Agler
class; Herglotz--Agler class; Bessmertny\u{\i} long resolvent
representation; positive-kernel decomposition; transfer-function
realization; lurking-isometry method; lurking-isotropic-subspace
method}

 \maketitle

 \section{Introduction}

 For $\cU$, $\cY$ coefficient separable Hilbert spaces, we define the
 operator-valued Schur class $\cS(\cU, \cY)$ (over the unit disk
 ${\mathbb D}$) to consist of all holomorphic functions $S$ on the
 unit disk ${\mathbb D}$ with values in the closed unit ball
  of the space
 $\cL(\cU, \cY)$ of bounded linear operators from $\cU$ to $\cY$,
 i.e.,  subject to $ \|S(\zeta)\| \le 1$  for all $\zeta \in
 {\mathbb D}$.
 The following result linking the theories of holomorphic functions,
 linear operators, and input/state/output linear systems is now well
 known (see e.g.~\cite{BB-Montreal} for a full discussion where
 multivariable extensions are also treated).

 \begin{theorem}  \label{T:Schur}  Given a function $S \colon
 {\mathbb D} \to \cL(\cU, \cY)$, the following are equivalent.

 \begin{itemize}
     \item[(1)] $S \in \cS(\cU, \cY)$.

     \item[(2)] The de Branges--Rovnyak kernel
  $$
  K_{S}(\omega, \zeta) = \frac{ I - S(\omega)^{*} S(\zeta)}{1 -
  \overline{\omega} \zeta}
  $$
  is a positive kernel on $\mathbb{D}$, i.e., there is an auxiliary Hilbert space
  $\cX$ and a holomorphic function $H \colon {\mathbb D} \to \cL(\cU,
  \cX)$ which gives rise to a {\em Kolmogorov decomposition} for
  $K_{S}$:
  $$
  K_{S}(\omega, \zeta) = H(\omega)^{*} H(\zeta).
  $$

  \item[(3)]  $S$ has a unitary transfer-function realization, i.e.,
  there is an auxiliary Hilbert state space $\cX$ and a unitary
  colligation matrix
  $$
  \bU  = \begin{bmatrix} A & B \\ C & D \end{bmatrix} \colon
  \begin{bmatrix} \cX \\ \cU \end{bmatrix} \to \begin{bmatrix} \cX\\
  \cY \end{bmatrix}
  $$
  so that
  $$
 S(\zeta) = D + \zeta C (I - \zeta A)^{-1} B \text{ for } \zeta
  \in {\mathbb D}.
  $$

  \item[(3$^{\prime}$)]  Condition (3) above holds where the
  colligation matrix $\bU$ is taken  to be any of (i) coisometric,
(ii)
  isometric, or (iii) contractive.
  \end{itemize}
  \end{theorem}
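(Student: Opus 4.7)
The plan is to run the cycle $(3) \Rightarrow (3') \Rightarrow (1) \Rightarrow (2) \Rightarrow (3)$. The step $(3) \Rightarrow (3')$ is free, since every unitary colligation is a fortiori contractive, isometric, and coisometric.

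For $(3') \Rightarrow (1)$, and simultaneously for $(3) \Rightarrow (2)$ in the unitary case, I would unwind the realization formula. Choosing an auxiliary function $H$ attached to the realization (for instance $H(\zeta) := C(I-\zeta A)^{-1}$ in the coisometric case) and exploiting the block decomposition of the condition $\bU^{*}\bU \le I$ or $\bU\bU^{*} \le I$, a routine algebraic manipulation produces an identity of the form
\[
  I - S(\omega)^{*}S(\zeta) \;=\; (1-\overline{\omega}\zeta)\bigl[H(\omega)^{*}H(\zeta) + \cR(\omega,\zeta)\bigr],
\]
in which $\cR$ is a positive kernel that vanishes when $\bU$ is unitary. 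Setting $\omega = \zeta$ yields $\|S(\zeta)\|\le 1$, hence (1); in the unitary case, dividing by $1 - \overline{\omega}\zeta$ exhibits the desired Kolmogorov factorization of $K_S$, hence (2).

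For $(2) \Rightarrow (3)$ I would implement the lurking-isometry method announced in the abstract. Given a Kolmogorov decomposition $K_S(\omega,\zeta) = H(\omega)^{*}H(\zeta)$, the defining identity rearranges to the Gram-matrix equality
\[
  \left\langle \sbm{H(\zeta)u \\ S(\zeta)u}, \sbm{H(\omega)v \\ S(\omega)v}\right\rangle \;=\; \left\langle \sbm{\zeta H(\zeta)u \\ u}, \sbm{\omega H(\omega)v \\ v}\right\rangle
\]
for all $u,v\in\cU$, $\omega,\zeta\in{\mathbb D}$. This identity says that the linear map
\[
  V \colon \sbm{\zeta H(\zeta)u \\ u} \longmapsto \sbm{H(\zeta)u \\ S(\zeta)u}
\]
is a well-defined isometry between the closed linear spans of its domain and range. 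Extending $V$ to a unitary $\bU = \sbm{A & B \\ C & D} \colon \sbm{\cX \\ \cU} \to \sbm{\cX \\ \cY}$ (enlarging $\cX$ orthogonally by defect spaces if necessary) and reading off the two component equations in $\bU \sbm{\zeta H(\zeta)u \\ u} = \sbm{H(\zeta)u \\ S(\zeta)u}$ gives $(I - \zeta A)H(\zeta) = B$ and $S(\zeta) = D + \zeta C H(\zeta)$, which combine to produce the transfer-function formula.

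The main obstacle is $(1) \Rightarrow (2)$: passing from one-point pointwise contractivity to two-point positivity of the entire de~Branges--Rovnyak kernel. My approach would be operator-theoretic. By the maximum modulus principle, (1) is equivalent to $\|M_S\|\le 1$, where $M_S \colon H^{2}(\cU)\to H^{2}(\cY)$ is multiplication by $S$; consequently $I - M_S M_S^{*}\ge 0$ on $H^{2}(\cY)$, and the de~Branges--Rovnyak model space $\cH(S) := \operatorname{Ran}(I - M_S M_S^{*})^{1/2}$ is a contractively contained Hilbert space in $H^{2}(\cY)$. A direct computation of its reproducing kernel against the Szeg\H{o} kernel $(1-\overline{\omega}\zeta)^{-1}$ of $H^{2}$ shows that this reproducing kernel is exactly $K_S$, so positivity of $K_S$ is automatic. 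Alternative routes via the Schur algorithm or commutant lifting would also work, but the $H^{2}$ route is the cleanest and closest in spirit to the multivariable generalizations the paper will pursue.
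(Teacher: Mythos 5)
The paper never proves Theorem \ref{T:Schur}; it is quoted as classical with a pointer to \cite{BB-Montreal}, so your proposal can only be compared with the standard argument and with the methods the paper actually develops. Judged that way, your architecture is sound, and your step (2)$\Rightarrow$(3) is exactly the lurking-isometry argument that the paper takes as its point of departure (compare the proof of (2)$\Rightarrow$(3) of Theorem \ref{T:HA-D}, where the same Gram-matrix identity is reinterpreted as isotropy of a subspace): the identity $I-S(\omega)^*S(\zeta)=(1-\overline{\omega}\zeta)H(\omega)^*H(\zeta)$ does rearrange to your equality of inner products, the map $V$ is a well-defined isometry between the closed spans, it extends to a unitary after enlarging $\cX$ to equalize defects, and the two component equations force $H(\zeta)=(I-\zeta A)^{-1}B$ and the transfer-function formula.

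Two points need repair, both stemming from the distinction (invisible for scalar $S$) between the $\cL(\cU)$-valued kernel $K_S(\omega,\zeta)=\frac{I-S(\omega)^*S(\zeta)}{1-\overline{\omega}\zeta}$ of the statement and its $\cL(\cY)$-valued dual $\frac{I-S(\zeta)S(\omega)^*}{1-\overline{\omega}\zeta}$. First, in (3$'$)$\Rightarrow$(1) and (3)$\Rightarrow$(2) the function adapted to $K_S$ is $H(\zeta)=(I-\zeta A)^{-1}B$ together with the \emph{column} condition $\bU^*\bU\le I$; your choice $C(I-\zeta A)^{-1}$ with the coisometric (row) condition produces the dual kernel. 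Moreover the remainder is not of the form $(1-\overline{\omega}\zeta)\cR(\omega,\zeta)$: the correct identity is
$I-S(\omega)^*S(\zeta)=(1-\overline{\omega}\zeta)H(\omega)^*H(\zeta)+\sbm{\omega H(\omega)\\ I}^*(I-\bU^*\bU)\sbm{\zeta H(\zeta)\\ I}$,
which is harmless for your purposes since only the diagonal $\omega=\zeta$ is needed for (1) and only the unitary case for (2), but your displayed form is not what the algebra yields. Second, and more substantively, in (1)$\Rightarrow$(2) the reproducing kernel of $\cH(S)=\operatorname{Ran}(I-M_SM_S^*)^{1/2}$ is the dual $\cL(\cY)$-valued kernel, not $K_S$; as written, your argument establishes positivity of the wrong kernel whenever $S$ is genuinely operator-valued. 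The standard fix is either to run the same argument for $S^\sim(\zeta):=S(\overline{\zeta})^*\in\cS(\cY,\cU)$ and translate back, or to observe directly that positivity of $K_S$ is exactly the contractivity of the densely defined map $k_\zeta\otimes u\mapsto k_\zeta\otimes S(\zeta)u$, which is the adjoint of the co-analytic Toeplitz operator $g\mapsto P_{H^2(\cU)}(S^*g)$ of norm at most $\sup_\zeta\|S(\zeta)\|\le 1$. With these adjustments the proof is complete and coincides with the classical one.
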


  It is natural to seek extensions of the Schur class to the
  multivariable setting where the disk ${\mathbb D}$ is replaced by
  the polydisk
  $$
  {\mathbb D}^{d} = \{ \zeta = (\zeta_{1}, \dots, \zeta_{d}) \in
  {\mathbb C}^{d} \colon |\zeta_{k}| < 1 \text{ for } k = 1, \dots,
  d\}.
  $$
  We therefore define the $d$-variable Schur class $\cS_{d}(\cU,
  \cY)$ to consist of holomorphic functions $S \colon {\mathbb D}^{d}
  \to \cL(\cU, \cY)$ subject to $\|S(\zeta) \| \le 1$ for all
  $\zeta \in {\mathbb D}^{d}$.  It was the profound observation of
  Agler \cite{Agler1990} that, unless $d \le 2$, a characterization
  of $\cS_{d}(\cU, \cY)$ of the same form as Theorem \ref{T:Schur} is
  not possible.  Instead, we define what is now called the
  Schur--Agler class, denoted as $\mathcal{SA}_{d}(\cU, \cY)$, to
  consist of holomorphic functions $S \colon {\mathbb D}^{d} \to
  \cL(\cU, \cY)$ such that $\|S(T_{1}, \dots, T_{d}) \| \le 1$
  whenever $T = (T_{1}, \dots, T_{d})$ is a commutative $d$-tuple of
  strict contraction operators on a fixed separable
  infinite-dimensional Hilbert space $\cK$.
Here the functional calculus defining $S(T_{1}, \dots, T_{d})$ can be
given by
$$
 S(T_{1}, \dots, T_{d}) = \sum_{n \in {\mathbb Z}^{d}_{+}} S_{n}
 \otimes T^{n}
$$
(convergence in the strong operator topology) where $S(\zeta) =
\sum_{n \in {\mathbb Z}^{d}_{+}} S_{n} \zeta^{n}$ is the
multivariable Taylor expansion for $S$ centered at the origin $0
\in {\mathbb D}^{d}$ and where we use standard multivariable
notation:
$$
\zeta^{n} = \zeta_{1}^{n_{1}} \cdots \zeta_{d}^{n_{d}}, \quad T^{n} =
T_{1}^{n_{1}} \cdots T_{d}^{n_{d}} \text{ if } n = (n_{1}, \dots,
n_{d}) \in {\mathbb Z}^{d}_{+}.
$$
The following result due to Agler \cite{Agler1990} (see also
\cite{AMcC99, BT})  has had a profound
impact on the subject over the years.

\begin{theorem}  \label{T:Agler}
     Given a function $S \colon
 {\mathbb D}^{d} \to \cL(\cU, \cY)$, the following are equivalent.

 \begin{itemize}
     \item[(1)] $S \in \mathcal{SA}_{d}(\cU, \cY)$.

     \item[(2)] $S$ has an {\em Agler decomposition} in the sense
     that there exists $d$ $\cL(\cY)$-valued positive kernels $K_{1},
     \dots, K_{d}$ on ${\mathbb D}^{d}$ such
     that
     \begin{equation} \label{DSchurAgdecom}
     I - S(\omega)^{*} S(\zeta) = \sum_{k=1}^{d} (1 - \overline{\omega}_{k}
     \zeta_{k}) K_{k}(\omega,\zeta).
     \end{equation}

  \item[(3)]  $S$ has a unitary Givone--Roesser $d$-dimensional
transfer-function realization, i.e.,
  there is an auxiliary Hilbert state space $\cX$ with a $d$-fold
orthogonal
  direct-sum decomposition $\cX = \cX_{1} \oplus \cdots \oplus
  \cX_{d}$ together with a unitary
  colligation matrix
  $$
  \bU  = \begin{bmatrix} A & B \\ C & D \end{bmatrix} \colon
  \begin{bmatrix} \cX \\ \cU \end{bmatrix} \to \begin{bmatrix} \cX\\
  \cY \end{bmatrix}
  $$
  so that
  \begin{equation}   \label{SAreal}
  S(\zeta) = D +  C (I - P(\zeta) A)^{-1} P(\zeta) B \text{ for }
\zeta
  \in {\mathbb D}^d.
  \end{equation}
  where we have set
  $$
   P(\zeta) = \zeta_{1} P_{1} + \cdots + \zeta_{d} P_{d}
   $$
   where $P_{k}$ is the orthogonal projection of $\cX$ onto $\cX_{k}$
   for each $k=1, \dots, d$.

  \item[(3$^{\prime}$)]  Condition (3) above holds where the
  colligation matrix $\bU$ is taken  to be any of (i) coisometric,
(ii)
  isometric, or (iii) contractive.
   \end{itemize}

  \end{theorem}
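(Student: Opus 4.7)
\emph{Strategy.} The plan is to close the cycle (1) $\Rightarrow$ (2) $\Rightarrow$ (3$'$) $\Rightarrow$ (1). Since (3) is a special case of (3$'$), the implication (3$'$) $\Rightarrow$ (1) will immediately yield (3) $\Rightarrow$ (1); conversely, the construction in (2) $\Rightarrow$ (3$'$) will produce an isometric colligation directly, which can be dilated in the standard Halmos/J-Sch\"affer manner (on a larger state space, still respecting the decomposition $\cX = \cX_{1} \oplus \cdots \oplus \cX_{d}$ by adding infinite multiplicities to each $\cX_{k}$) to yield coisometric or unitary versions with the same transfer function, hence condition (3).

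\emph{(3$'$) $\Rightarrow$ (1): operator substitution.} Given a contractive $\bU$ and a commuting $d$-tuple $T = (T_{1}, \dots, T_{d})$ of strict contractions on $\cK$, set $P(T) := \sum_{k=1}^{d} T_{k} \otimes P_{k} \in \cL(\cK \otimes \cX)$. Since the $P_{k}$ are mutually orthogonal projections summing to $I_{\cX}$, $\|P(T)\| = \max_{k} \|T_{k}\| < 1$, so $(I - P(T)(I_{\cK} \otimes A))^{-1}$ is given by a convergent Neumann series. Expanding that series and using commutativity of the $T_{k}$ identifies
\begin{equation*}
I_{\cK} \otimes D + (I_{\cK} \otimes C)\bigl(I - P(T)(I_{\cK} \otimes A)\bigr)^{-1} P(T) (I_{\cK} \otimes B)
\end{equation*}
with $S(T) = \sum_{n} S_{n} \otimes T^{n}$ coming from the Taylor expansion of $S$. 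The classical Julia/Redheffer linear-fractional bound, applied to the contraction $I_{\cK} \otimes \bU$ and the strict contraction $P(T)$, then yields $\|S(T)\| \le 1$.

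\emph{(2) $\Rightarrow$ (3$'$): lurking isometry.} Take Kolmogorov factorizations $K_{k}(\omega, \zeta) = H_{k}(\omega)^{*} H_{k}(\zeta)$ with $H_{k} \colon \mathbb{D}^{d} \to \cL(\cU, \cX_{k})$. Rearranging (\ref{DSchurAgdecom}) as
\begin{equation*}
\langle u, u' \rangle_{\cU} + \sum_{k=1}^{d} \langle \zeta_{k} H_{k}(\zeta) u, \omega_{k} H_{k}(\omega) u' \rangle = \langle S(\zeta) u, S(\omega) u' \rangle + \sum_{k=1}^{d} \langle H_{k}(\zeta) u, H_{k}(\omega) u' \rangle,
\end{equation*}
and writing $\cX := \cX_{1} \oplus \cdots \oplus \cX_{d}$ and $H(\zeta) u := (H_{1}(\zeta)u, \dots, H_{d}(\zeta)u) \in \cX$, I read both sides as inner products in $\cU \oplus \cX$ and $\cY \oplus \cX$ respectively. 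Thus the map
\begin{equation*}
V_{0} \colon \sbm{ P(\zeta) H(\zeta) u \\ u } \longmapsto \sbm{ H(\zeta) u \\ S(\zeta) u } \qquad (\zeta \in \mathbb{D}^{d},\ u \in \cU)
\end{equation*}
is a well-defined isometry between subspaces of $\cX \oplus \cU$ and $\cX \oplus \cY$. Enlarging each $\cX_{k}$ by infinite multiplicity, $V_{0}$ extends to an isometric colligation $\bU = \sbm{A & B \\ C & D}$; the relation $AP(\zeta)H(\zeta)u + Bu = H(\zeta)u$ yields $H(\zeta) u = (I - AP(\zeta))^{-1} Bu$, and combining this with $CP(\zeta) H(\zeta) u + Du = S(\zeta)u$ and the intertwining $P(\zeta)(I - AP(\zeta))^{-1} = (I - P(\zeta)A)^{-1}P(\zeta)$ gives exactly (\ref{SAreal}).

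\emph{(1) $\Rightarrow$ (2): Hahn--Banach duality (the main obstacle).} For each finite $\Omega \subset \mathbb{D}^{d}$, consider the closed convex cone $\cC_{\Omega} \subset M_{\Omega}(\cL(\cU))_{\mathrm{sa}}$ generated by matrices of the form $\sum_{k=1}^{d}\bigl[(1 - \bar\omega_{i,k}\omega_{j,k}) K_{k,ij}\bigr]_{i,j \in \Omega}$ with each $K_{k} = [K_{k,ij}]$ a positive $\Omega \times \Omega$ block matrix over $\cL(\cU)$. I would show $[I_{\cU} - S(\omega_{i})^{*} S(\omega_{j})]_{i,j} \in \cC_{\Omega}$. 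If not, a separating self-adjoint functional, combined with a GNS/Arveson-type dilation applied to the operator system generated by the coordinate functions on $\overline{\mathbb{D}}^{d}$, produces a commuting $d$-tuple of strict contractions $T$ on some Hilbert space with $\|S(T)\| > 1$, contradicting (1). A Montel/normal-families compactness argument then stitches the finite-set decompositions into a globally holomorphic Agler decomposition on $\mathbb{D}^{d}$. The principal difficulty is in this last implication: verifying that the separating functional gives genuinely \emph{strict} (not merely contractive) commuting operators, and then organising the limit so that the resulting kernels $K_{k}(\omega, \zeta)$ are holomorphic in $\zeta$ and conjugate-holomorphic in $\omega$ jointly on $\mathbb{D}^{d}$.
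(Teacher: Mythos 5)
Your proposal is a correct outline of the standard proof, but note that the paper does not actually prove Theorem \ref{T:Agler}: it quotes it as known, citing \cite{Agler1990, AMcC99, BT}, and your three steps --- cone separation plus a GNS-type construction for (1)$\Rightarrow$(2), the lurking isometry for (2)$\Rightarrow$(3$'$), and the Neumann-series/Redheffer contractivity bound for (3$'$)$\Rightarrow$(1) --- are precisely the architecture of the proofs in those references. Your (2)$\Rightarrow$(3$'$) step is also the template the paper itself adapts (in Kre\u{\i}n-space, isotropic-subspace form) to prove the analogous Theorem \ref{T:HA-D}, so there is no methodological divergence. Two small remarks on the direction you rightly identify as the hard one. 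First, the strictness issue is handled by the standard rescaling: the separating functional yields commuting contractions $T=(T_1,\dots,T_d)$ for which the relevant quadratic form is negative, and one passes to $rT$ with $r<1$ (strict contractions, for which $S(rT)$ is defined by the convergent Taylor series) and lets $r\uparrow 1$ to contradict (1). Second, the holomorphy of the kernels is not actually demanded by statement (2) as written (only positivity of the kernels is), and in any case sesqui-analyticity can be obtained a posteriori by running (2)$\Rightarrow$(3)$\Rightarrow$(2), since the kernels $K_k(\omega,\zeta)=B^*(I-P(\omega)^*A^*)^{-1}P_k(I-AP(\zeta))^{-1}B$ read off from a realization are automatically holomorphic in $\zeta$ and conjugate-holomorphic in $\omega$; so that worry can be discharged cheaply rather than by organizing the normal-families limit.
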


  The goal of this paper is to study parallel results for an
  assortment of linear-fractional transformed versions of the
  Schur--Agler class.  Specifically, we seek analogous
  characterizations of the following classes of holomorphic functions:
  \begin{enumerate}
      \item The Herglotz--Agler class over the unit polydisk ${\mathbb
      D}^{d}$, denoted $\mathcal{HA}({\mathbb D}^{d}, \cL(\cU))$
defined as the class of all holomorphic functions $F
      \colon {\mathbb D}^{d} \to \cL(\cU)$ such that $F(T)$ has positive real part
      $$
      F(T) + F(T)^{*} \ge 0
      $$
      for all commutative $d$-tuples $T = (T_{1}, \dots, T_{d})$ of
      strict contractions on the Hilbert space $\cK$.

      \item The Schur--Agler class over the right polyhalfplane
      $$ \Pi^{d} = \{ z = (z_{1}, \dots, z_{d}) \in {\mathbb C}^{d}
      \colon z_{k} + \overline{z}_{k} > 0 \text{ for } k = 1, \dots, d\},
      $$
      denoted by $\mathcal{SA}(\Pi^{d}, \cL(\cU, \cY))$,
      consisting of all holomorphic $\cL(\cU, \cY)$-valued functions
     $s$ on $\Pi^{d}$ such that $\|s(A)\| \le 1$ for all strictly
     accretive commutative $d$-tuples $A = (A_{1}, \dots, A_{d})$  of
operators
     on $\cK$ (i.e., such that $A_k+A_k^*\ge cI$ for some constant $c>0$, $k=1,\ldots,d$).

     \item The Herglotz--Agler class over $\Pi^d$, denoted by
      $\mathcal{HA}(\Pi^{d}, \cL(\cU))$, consisting of all
     holomorphic functions $f$ on $\Pi^{d}$ such that $f(A) + f(A)^{*}
     \ge 0$ for all  strictly accretive commutative $d$-tuples $A$ of operators on $\cK$.

     \item The Nevanlinna--Agler class over the upper polyhalfplane
     $$ (i\Pi)^{d} = \{ z = (z_{1}, \dots, z_{d}) \in
     {\mathbb C}^{d} \colon \frac{ z_{k} - \overline{z}_{k}}{2 i} > 0
\text{ for }
     k=1, \dots, d\},
     $$
     denoted by $\mathcal{NA}((i\Pi)^{d}, \cL(\cU))$,  consisting of
     all holomorphic $\cL(\cU)$-valued functions $\widetilde f$ on
$(i
     \Pi)^{d}$ such that $\frac{1}{2i} (\widetilde f(\widetilde A) -
\widetilde f(\widetilde
     A)^{*}) \ge 0$ whenever $\widetilde A = (\widetilde A_{1},
     \dots, \widetilde A_{d})$ is a commutative $d$-tuple of
     operators on $\cK$, each with strictly positive-definite
imaginary part (i.e., such that $\frac{1}{2i}(A_k-A_k^*)\ge cI$
for some constant $c>0$, $k=1,\ldots,d$).
       \end{enumerate}
 To be consistent with the more detailed notation used for these
 variants of the Schur--Agler class, we will also use the notation
 $\mathcal{SA}({\mathbb D}^{d}, \cL(\cU, \cY))$ for the Schur--Agler
 class $\mathcal{SA}_{d}(\cU, \cY)$ over the polydisk ${\mathbb
 D}^{d}$ discussed above.

 We note that our convention is to use the term {\em Herglotz} for
 functions with values having positive real part, and {\em Nevanlinna}
 for functions with values having positive imaginary part; we
 recognize that these
 conventions  are by no means universal (see e.g. \cite{ABT}).

  For the single-variable case such realization results have been
  explored in a systematic way in \cite{Staffans-ND} and \cite{BS}.
  For the multivariable setting, apart from the now classical
  Schur--Agler class over the polydisk $\mathcal{SA}_{d}(\cU,
  \cY)$, the only results along these lines which we are aware of are
  those in the recent paper of Agler--McCarthy--Young \cite{AMcCY} and
  of Agler--Tully-Doyle--Young \cite{ATDY1, ATDY2}.

  The approach in \cite{Staffans-ND} (in the single-variable
  setting) is to use a linear-fractional-transformation (LFT)
  change of variables (on the domain and/or the range side) to
  reduce the desired result to the corresponding result for the Schur
  class over the unit disk. This is also the main tool in
  \cite{AMcCY, ATDY1, ATDY2}: use an LFT Cayley-transform change of
  variables to reduce results for the Nevanlinna--Agler class to the
  corresponding known results for the Schur--Agler class.  However the
  procedure is rather intricate due to the added subtleties involved
  in  handling points at infinity in the multivariable case.

  In contrast, the approach in \cite{BS}
  is to apply a projective version of the lurking isometry argument
  (roughly, a lurking-isotropic-subspace argument in a
  Kre\u{\i}n-space setting) to arrive at the desired realization
  result via a direct but unified  Kre\u{\i}n-space geometric
  argument.  One of the main contributions of the present work is to
  extend this approach to the multivariable setting.  The main
  difficulty is to guarantee that a naturally defined isotropic
  subspace is actually a graph space with respect to a system of
  coordinates not coming from a fundamental decomposition of the
  ambient Kre\u{\i}n space.  We show how this difficulty can be
  overcome for the case of the ${\mathbb D}^{d}$-Herglotz--Agler class
  and $\Pi^{d}$-Schur--Agler class.  For the $\Pi^{d}$-Herglotz--Agler
  class, we are able to overcome the difficulty only in a special case
  (associated with the imposition of a growth condition at
  infinity), thereby recovering parallel results from \cite{AMcCY}.
  For the most general $\Pi^{d}$-Herglotz--Agler
  function $f$, we follow the LFT change-of-variable approach of
  \cite{ATDY1} combined with the more general realization formalism
  (Schur complement of an operator pencil) suggested by the work of
  Bessmertny\u{\i} (see \cite{Bes1, Bes2, Bes3, Bes4, K-V})
 to arrive at a realization formula for the most general
  $\Pi^{d}$-Herglotz--Agler function.  We note that  the original
Bessmertny\u{\i} class involved additional symmetries leading to
strong rigidity results. It was  conjectured in \cite{B} that an
appropriate weakening of the metric conditions for the
Bessmertny\u{\i} operator pencil should lead to a representation
for the most general $\Pi^{d}$-Herglotz--Agler function.  Here we
show that this conjecture is correct once one identifies the
appropriate modification:  one must allow the nonhomogeneous
skew-adjoint term in the nonhomogeneous Bessmertny\u{\i} operator
pencil to be unbounded (more precisely, a certain flip
$\Pi$-impedance-conservative system node in the sense of
\cite{Staffans-ND}).

  There has been a lot of work on transfer-function realization for
  the single-variable Schur and Herglotz classes over the right
  half plane.  The most influential for our point of view toward
  multivariable generalizations is the work of Arov-Nudelman
  \cite{AN} and of Staffans and collaborators (see
  \cite{StaffansMCSS, Staffans-ND, BS, MSW, Staffans2013} as well as the treatise \cite{Staffans}
  and the references there).  There is also a complementary approach
  to such realization theory (upper halfplane rather than right
  halfplane version)  with emphasis on the theory of selfadjoint
  extensions of densely defined symmetric operators  on a Hilbert
  space (see \cite{BHdS2008, BHdS2009, BHdST} as well as the recent
  book \cite{ABT} and the references there).

  The paper is organized as follows.  Section \ref{S:prelim} highlights
  the main ideas from Kre\u{\i}n-space geometry and from infinite-dimensional
  systems theory, in
  particular, the idea of a {\em system node}, which will be used in
  the later sections.  Section \ref{S:HA-D} presents our results for
  the Herglotz--Agler class over the polydisk while Section
  \ref{S:SA-Pi} does the same for the Schur--Agler class over the right
  polyhalfplane.  Section \ref{S:HA-Pi} presents our results for the
  restricted Herglotz--Agler class over the right polyhalfplane where
  a growth condition at infinity is imposed on the functions to be
  realized.  With this added restriction, the
  lurking-isotropic-subspace method from \cite{BS} adapts well to lead
  to a classical type realization (but with an in general unbounded
  $\Pi$-impedance-conservative system node) for the
  $\Pi^{d}$-Herglotz--Agler function.  Section 6 identifies the
 nonhomogeneous unbounded Bessmertny\u{\i} operator pencils which then
 lead to a realization for the most general Herglotz--Agler function
 over  $\Pi^{d}$.  We also mention that the results parallel to the
 results of this paper for the four classes under discussion
 (${\mathbb D}^{d}$-Schur--Agler, ${\mathbb D}^{d}$-Herglotz--Agler,
 $\Pi^{d}$-Schur--Agler and $\Pi^{d}$-Herglotz--Agler)  for the
 rational matrix-valued (Cayley) inner case, where the emphasis is on
 obtaining realizations with finite-dimensional state space, are
 obtained in our companion paper \cite{BK-V}.

 We shall have occasion to need a Cayley transform (with both scalar
 and operator argument) acting between the right halfplane and the
 unit disk.  Following the conventions in \cite{K-V} and \cite{BK-V},
 we shall make use of the following version:
 \begin{align}
  & \zeta \in {\mathbb D}  \mapsto w = \frac{1+ \zeta}{1-\zeta} \in
   \Pi, \text{ with inverse given by } \notag \\
   & w \in \Pi  \mapsto \zeta = \frac{w-1}{w+1} \in {\mathbb D}.
 \label{Cayley}
 \end{align}
 For $\zeta = (\zeta_{1}, \dots, \zeta_{d})$ a point in the unit
 polydisk ${\mathbb D}^{d}$, we continue to use the notation
 \begin{equation} \label{Cayley1}
     \frac{1 + \zeta}{1 - \zeta} : =
     \left( \frac{ 1 + \zeta_{1}}{1 - \zeta_{1}}, \dots,
       \frac{1 + \zeta_{d}}{1 - \zeta_{d}} \right)
  \end{equation}
  for the corresponding point in the right polyhalfplane $\Pi^{d}$.
  Similarly, given a point $w = (w_{1}, \dots, w_{d})$ in the right
  polyhalfplane $\Pi^{d}$, we use the notation
  \begin{equation}   \label{Cayley2}
      \frac{w-1}{w+1} := \left( \frac{ w_{1}-1}{w_{1}+1}, \dots,
      \frac{w_{d}-1}{w_{d}+1}\right)
   \end{equation}
for the associated point in the polydisk.

\subsection*{Acknowledgement.}  The authors are
thankful to the anonymous referee for his/her careful reading and
constructive remarks.

  \section{Preliminaries}  \label{S:prelim}

  \subsection{Decompositions of the identity} \label{S:decomid}

  Given a Hilbert space $\cX$, we shall say that a collection of $d$
  operators $(Y_{1}, \dots, Y_{d})$ on $\cX$ forms a {\em $d$-fold
  positive decomposition of the identity $I_{\cX}$} if each $Y_{k}$
  is a selfadjoint contraction ($0 \le Y_{k} \le I_{\cX}$ for $1 \le
  k \le d$) which together sum up to the identity ($\sum_{k=1}^{d} Y_{k} =
  I_{\cX}$).  In case $(Y_{1}, \dots, Y_{d}) = (P_{1}, \dots, P_{d})$
  consists of orthogonal projection operators (necessarily with
  pairwise orthogonal ranges) we shall say that $(P_{1}, \dots,
  P_{d})$ forms a {\em $d$-fold spectral decomposition of $I_{\cX}$}.
  Note that $d$-fold spectral decompositions $(P_{1}, \dots, P_{d})$
  arise in the realization formula for the Schur--Agler class in
  Theorem \ref{T:Agler}.  We shall see that the more general
  positive decompositions are needed for the realization formulas
  for functions in the $\Pi^{d}$-Schur--Agler class and in the
  $\Pi^{d}$-Herglotz--Agler class, as already discovered in
  \cite{AMcCY, ATDY1, ATDY2}.

  From the definitions we see that any spectral decomposition
  $(P_{1}, \dots, P_{d})$ is also a positive decomposition.
  There is also a result in the converse direction:  {\em if $(Y_{1},
  \dots, Y_{d})$ is a positive decomposition of $I_{\cX}$, then there
  exist a Hilbert space $\widetilde \cX$,
  a spectral decomposition  $(P_{1}, \dots, P_{d})$ of
  $I_{\widetilde \cX}$, and an isometric embedding
  $\iota\colon\mathcal{X}\to\widetilde{\mathcal{X}}$ such that}
  $$
     Y_{k} = \iota^*P_{k} \iota \ \text{ for } k = 1, \dots, d.
  $$
  This can be seen as a consequence of the Naimark dilation theorem
  (apply \cite[Theorem 4.6]{Paulsen} with the measurable space $X$ taken
  to be the finite set $\{k \in {\mathbb N} \colon 1 \le k \le d\}$).
  To prove the result for this simple case of the Naimark dilation
  theorem, simply define an isometric embedding of $\cX$ into
  $\bigoplus_{i=1}^{d} \cX$ by
  $$
   \iota = \left[ \begin{matrix} Q_{1} \\ \vdots \\ Q_{d}
\end{matrix} \right]
$$
where $Q_{k}$ provides a factorization $Y_{k} = Q_{k}^{*} Q_{k}$
and take $P_{k}$ equal to the projection onto the $k$-th block in the
direct-sum space $\bigoplus_{i=1}^{d} \cX$.

\subsection{Basics on the geometry of Kre\u{\i}n spaces}
\label{S:Krein}

In this Section we review some basics about the geometry of
Kre\u{\i}n spaces and Kre\u{\i}n-space operator theory which we shall
need in the sequel.  Other resources on this topic is a similar
survey section in the paper  \cite{BS} as well as the more complete
treatises \cite{Bognar, AI}.

A Kre\u{\i}n space by definition is a linear space $\cK$ endowed
with an indefinite inner product $[ \cdot, \cdot]$ which is
{\em complete} in the following sense:  there are two subspaces
$\cK_{+}$ and $\cK_{-}$ of $\cK$ such that the restriction of $[
\cdot, \cdot ]$ to $\cK_{+} \times \cK_{+}$ makes $\cK_{+}$ a
Hilbert space while the restriction of $-[ \cdot, \cdot]$ to
$\cK_{-} \times \cK_{-}$ makes $\cK_{-}$ a Hilbert space, and $\cK = \cK_{+} [\dot
+] \cK_{-}$ is a $[ \cdot, \cdot]$-orthogonal direct sum decomposition
of $\cK$.  In this case the decomposition $\cK = \cK_{+} [ \dot + ]
\cK_{-}$ is said to form a {\em fundamental decomposition} for $\cK$.
Fundamental decompositions are never unique except in the trivial
case where one of $\cK_{+}$ or $\cK_{-}$ is equal to the zero space.

Unlike the case of Hilbert spaces where closed subspaces all look
the same, there is a rich geometry for subspaces of a Kre\u{\i}n
space. A subspace $\cM$ of a Kre\u{\i}n space $\cK$ is said to be
{\em positive}, {\em isotropic}, or {\em negative} depending on
whether $[u, u] \ge 0$ for all $u \in \cM$, $[ u, u ] =0$ for all
$u \in \cM$ (in which case it follows that $[u,v] = 0$ for all
$u,v \in \cM$ as a consequence of the Cauchy-Schwarz inequality),
or $[u,u] \le 0$ for all $u \in \cM$.  Given any subspace $\cM$,
we define the Kre\u{\i}n-space orthogonal complement
$\cM^{[\perp]}$ to consist of all $v \in \cK$ such that $[u,v] =
0$ for all $u \in \cK$.  Note that the statement that $\cM$ is
isotropic is just the statement that $\cM \subset \cM^{[ \perp
]}$.  If it happens that $\cM = \cM^{[ \perp]}$, we say that $\cM$
is a {\em Lagrangian} subspace of $\cK$.

Examples of such subspaces arise from placing appropriate
Kre\u{\i}n-space inner products on the direct sum $\cH_{1} \oplus
\cH_{2}$ of two Hilbert spaces and looking at graphs of operators of
an appropriate class.

\begin{example}  \label{E:unitary}  {\em Suppose that $\cH'$ and
    $\cH$ are two Hilbert spaces and we take $\cK$ to be the
    external direct sum $\cH' \oplus \cH$ with inner product
    $$
    \left[ \begin{bmatrix} x \\ y \end{bmatrix}, \,
    \begin{bmatrix} x' \\ y' \end{bmatrix} \right] =
    \left\langle \begin{bmatrix} I_{\cH'} & 0 \\ 0 & -I_{\cH}
    \end{bmatrix} \begin{bmatrix} x \\ y \end{bmatrix}, \,
    \begin{bmatrix} x' \\ y' \end{bmatrix} \right\rangle_{\cH' \oplus \cH}
    $$
where $\langle \cdot, \cdot \rangle_{\cH' \oplus \cH}$ is the
standard Hilbert-space inner product on the direct-sum Hilbert
space $\cH' \oplus \cH$.  In this case it is easy to find a
fundamental decomposition: take $\cK_{+} = \sbm{ \cH \\ \{0\}}$
and $\cK_{-} = \sbm{ \{0\} \\ \cH' }$. Now let $T$ be a bounded
linear operator from $\cH$ to $\cH'$ and let $\cM$ be the graph of
$T$:
$$
  \cM = \cG_{T} = \left\{ \begin{bmatrix} T x \\ x \end{bmatrix}
  \colon x \in \cH \right\} \subset \cK.
  $$
  Then a good exercise is to work out the following facts:
  \begin{itemize}
      \item $\cG_{T}$ is negative if and only if $\|T\| \le 1$.

      \item $\cG_{T}$ is isotropic if and only if $T$ is isometric
      ($T^{*} T = I_{\cH}$).

      \item $\cG_{T}$ is Lagrangian if and only if $T$ is unitary:
      $T^{*} T = I_{\cH}$ and $T T^{*} = I_{\cH'}$.
   \end{itemize}
}\end{example}

\begin{example}  \label{E:skewadjoint} {\em  Let $\cH$ be a Hilbert space
    and set $\cK$ equal to the direct-sum space $\cK = \cH \oplus
    \cH$ with indefinite inner product given by
    $$ \left [ \begin{bmatrix} x \\ y \end{bmatrix},
    \begin{bmatrix} x' \\ y' \end{bmatrix} \right ] =
    \left \langle \begin{bmatrix} 0 & I_{\cH} \\ I_{\cH} & 0
    \end{bmatrix} \begin{bmatrix} x \\ y \end{bmatrix}, \,
    \begin{bmatrix} x' \\ y' \end{bmatrix} \right\rangle_{\cH \oplus
    \cH}.
$$
In this case a choice of fundamental decomposition is not so obvious;
one such choice is
$$
  \cK_{+} = \left\{ \begin{bmatrix} x \\ x \end{bmatrix} \colon x \in
  \cH \right\}, \quad
   \cK_{-} = \left\{ \begin{bmatrix} -y \\ y \end{bmatrix} \colon y \in
  \cH \right\}.
 $$
Then the exercise parallel to that suggested in Example
\ref{E:unitary} is to work out the following:  given a closed operator $T
\in \cL(\cH)$ with dense domain $\cD(T) \subset \cH$, let
$$
 \cG_{T} = \left\{ \begin{bmatrix} Tx \\ x \end{bmatrix} \colon x \in
 \cD(T) \right\} \subset \cK
 $$
 be its graph space.  Then:
 \begin{itemize}
     \item  $\cG_{T}$ is negative if and only if $T$ is dissipative:
     $\langle Tx, x \rangle + \langle x, Tx \rangle \le 0$ for all $x
     \in \cD(T)$.

     \item $\cG_{T}$ is {\em maximal negative}, i.e.,
     $\cG_{T}$ is negative and is not contained in any properly
     larger negative subspace, if and only if
     $T$ is {\em maximal dissipative}, i.e.,  $T$ is dissipative and  has no
     proper dissipative extension. An equivalent condition is $T$ is
     dissipative and the operator $I + T$ is onto (or equivalently
     $wI + T$ is onto for all $w$ in the right halfplane $\Pi^{+}$) (see
     \cite{Phillips59}).

     \item $\cG_{T}$ is isotropic if and only if $T$ is {\em
     skew-symmetric} or $T \subset -T^{*}$,
     i.e.  $\langle Tx, x \rangle_{\cH} + \langle x, T x
     \rangle_{\cH} = 0$ for all $x \in \cD(T)$.

     \item $\cG_{T}$ is Lagrangian if and only if $T$ is
     skew-adjoint or $T = -T^{*}$ (i.e., $T \subset -T^{*}$ and
     $T$ and $T^{*}$ have the same domain:  $y,z \in \cH$ such that
     $\langle Tx, y \rangle_{\cH} = \langle x, z \rangle_{\cH}$
     implies that $y \in \cD(T)$ and $z = -Ty$.  A closely related
     result is proved in Corollary \ref{C:impconserv} below.
 \end{itemize}
 }\end{example}

 We shall have use for the following connection between Examples
 \ref{E:unitary} and \ref{E:skewadjoint}.  Consider Example
 \ref{E:unitary} for the case where $\cH' = \cH$ and call this
 Kre\u{\i}n space $\cK_{1}$.  Let $\cK_{2}$ be the Kre\u{\i}n space
 $\cH \oplus \cH$ with Kre\u{\i}n-space inner product
 the negative of the inner product as in Example \ref{E:skewadjoint}:
  $$ \left [ \begin{bmatrix} x \\ y \end{bmatrix},
    \begin{bmatrix} x' \\ y' \end{bmatrix} \right ]_{\cK_{2}} =
    \left \langle \begin{bmatrix} 0 & -I_{\cH} \\ -I_{\cH} & 0
    \end{bmatrix} \begin{bmatrix} x \\ y \end{bmatrix}, \,
    \begin{bmatrix} x' \\ y' \end{bmatrix} \right\rangle_{\cH \oplus
    \cH}.
$$
Note that this modification of the inner product just interchanges
positive and negative subspaces and preserves isotropic and
Lagrangian subspaces.  Then the operator $\Gamma$ defined by
 $$
   \Gamma: = \frac{1}{\sqrt{2}} \begin{bmatrix} I_{\cH} & I_{\cH} \\ - I_{\cH}
   & I_{\cH} \end{bmatrix}
   \colon \begin{bmatrix} \cH \\ \cH \end{bmatrix} \to
   \begin{bmatrix} \cH \\ \cH \end{bmatrix}
  $$
  defines a Kre\u{\i}n-space isomorphism between the Kre\u{\i}n space
  $\cK_{1}$ as in Example \ref{E:skewadjoint} and the Kre\u{\i}n space
 $\cK_{2}$ as in Example \ref{E:unitary};  this is a consequence
 of the bijectivity of $\Gamma$ along with the identity
 $$
    \Gamma^{*} \begin{bmatrix} 0 & -I \\ -I & 0 \end{bmatrix} \Gamma =
    \begin{bmatrix} I & 0 \\ 0 & -I \end{bmatrix}.
$$
It follows that $\Gamma$ maps $\cK_{1}$-Lagrangian subspaces to
$\cK_{2}$-Lagrangian subspaces.  In particular, if $U$ is unitary and
does not have $1$ as an eigenvalue, we define the Cayley transform $Y$
of $T$ according to the formula
\begin{equation}   \label{defY}
 \cD(Y) = {\rm Ran} (I - U) \text{ and } Y = (I + U) (I - U)^{-1}.
\end{equation}
Note that
\begin{align*}
    \Gamma \cG_{U} & =  \frac{1}{\sqrt{2}}\begin{bmatrix} I & I \\ -I & I \end{bmatrix}
    \left\{
 \begin{bmatrix} U x \\ x \end{bmatrix} \colon x \in \cH \right\} \\
     & = \left\{ \begin{bmatrix} (I + U) x \\ (I - U) x \end{bmatrix}
     \colon x \in \cH \right\} \\
     & = \left\{ \begin{bmatrix} (I + U) (I - U)^{-1} \\ I
 \end{bmatrix} (I - U) x \colon x \in \cH\right\} \\
 & = \left\{ \begin{bmatrix} Y \\ I_{\cH} \end{bmatrix} y \colon y
 \in \cD(Y) \right\} = \cG_{Y}.
\end{align*}
There results the following fact concerning the Cayley transform map
of this type.

\begin{proposition}   \label{P:unitary/skewadj}  Suppose that $U$ is
a linear operator on a Hilbert space $\cH$ which does not have $1$ as
an eigenvalue.  Define the Cayley transform $Y$ of $U$ as in
\eqref{defY}.  Then $U$ is unitary if and only if $Y$ is skew-adjoint.
\end{proposition}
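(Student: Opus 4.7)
The plan is to derive Proposition \ref{P:unitary/skewadj} directly from the Kre\u{\i}n-geometric calculation performed immediately above the statement, combined with Examples \ref{E:unitary} and \ref{E:skewadjoint}. Specifically, Example \ref{E:unitary} (in the case $\cH' = \cH$) identifies unitarity of $U$ with $\cG_U$ being Lagrangian in $\cK_1$, while Example \ref{E:skewadjoint} identifies skew-adjointness of a closed densely defined $Y$ with $\cG_Y$ being Lagrangian in the Kre\u{\i}n space with Gram matrix $\sbm{0 & I \\ I & 0}$; because $\cK_2$ uses the opposite sign of this form and Kre\u{\i}n-orthogonality depends only on which vectors pair to zero, $Y$ is skew-adjoint if and only if $\cG_Y$ is $\cK_2$-Lagrangian.

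First I would observe that the hypothesis $1 \notin \sigma_p(U)$ makes $I-U$ injective, so $Y = (I+U)(I-U)^{-1}$ is a bona fide linear operator with $\cD(Y) = {\rm Ran}(I-U)$. Next I would invoke the computation displayed immediately before the proposition, namely $\Gamma \cG_U = \cG_Y$; combined with the identity $\Gamma^{*}\sbm{0 & -I \\ -I & 0}\Gamma = \sbm{I & 0 \\ 0 & -I}$ and the invertibility of $\Gamma$, this exhibits $\Gamma$ as a Kre\u{\i}n-space isomorphism $\cK_1 \to \cK_2$ carrying $\cG_U$ onto $\cG_Y$. Because Kre\u{\i}n-space isomorphisms preserve Kre\u{\i}n-orthogonal complements, and hence the Lagrangian property, the chain
$$
U \text{ unitary} \iff \cG_U \text{ is } \cK_1\text{-Lagrangian} \iff \cG_Y \text{ is } \cK_2\text{-Lagrangian} \iff Y \text{ is skew-adjoint}
$$
yields the proposition.

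The main (and essentially only) technical point is to verify that the hypotheses of Example \ref{E:skewadjoint} are in force, i.e., that $Y$ is closed and densely defined whenever the equivalence is invoked. Closedness is automatic: $\Gamma$ is a bounded invertible operator on $\cH \oplus \cH$, and $\cG_U$ is closed (as $U$ is bounded in the forward direction), so $\cG_Y = \Gamma\cG_U$ is closed, whence $Y$ is closed. Density of $\cD(Y) = {\rm Ran}(I-U)$ in $\cH$ follows from $1 \notin \sigma_p(U^{*})$, which for unitary $U$ (so $U^{*} = U^{-1}$) is equivalent to the standing assumption $1 \notin \sigma_p(U)$. In the reverse direction, the analogous properties of a skew-adjoint $Y$ supply both closedness and density, and the boundedness and unitarity of $U = (Y-I)(Y+I)^{-1}$ can, if desired, also be read off directly from the algebraic identity.
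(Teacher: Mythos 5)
Your proposal is correct and is essentially the paper's own argument: the authors prove the proposition precisely by the preceding computation $\Gamma\cG_U = \cG_Y$, the identity $\Gamma^*\sbm{0 & -I \\ -I & 0}\Gamma = \sbm{I & 0 \\ 0 & -I}$ showing $\Gamma$ is a Kre\u{\i}n-space isomorphism carrying Lagrangian subspaces to Lagrangian subspaces, and the graph characterizations of unitarity and skew-adjointness from Examples \ref{E:unitary} and \ref{E:skewadjoint}. Your added verification that $Y$ is closed with dense domain (via $1\notin\sigma_p(U^*)$ for unitary $U$) fills in a point the paper leaves implicit.
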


  \subsection{Well-posed linear systems and system nodes}
  \label{S:sys}

  A continuous-time input/state/output (i/s/o) linear system is a system of
  equations of the form
  \begin{equation}  \label{sys}
      \Sigma \colon \left\{
      \begin{array}{rcl}
      \dot x(t) & = & A x(t) + B u(t)  \\
      y(t) & = & C x(t) + D u(t)
  \end{array}  \right.
  \end{equation}
  where $x(t)$ takes values in the {\em state space} $\cX$, $u(t)$
  takes values in the {\em input space} $\cU$, and $y(t)$ takes values in
the
  {\em output space} $\cY$.  Under the assumption that the {\em system
  matrix} (sometimes also called the {\em colligation matrix})
  $$
    \Sigma: = \begin{bmatrix} A & B \\ C & D \end{bmatrix} \colon
    \begin{bmatrix} \cX \\ \cU \end{bmatrix} \to \begin{bmatrix} \cX
    \\ \cY \end{bmatrix}
  $$
  consists of {\em  bounded} operators,
  imposition of the initial condition $x(0) = 0$
  and application of the Laplace transform
  $$
  \widehat x(w) = \int_{0}^{\infty} e^{-wt} x(t)\,{\tt d}t, \quad
  \widehat u(w) = \int_{0}^{\infty} e^{-wt} u(t)\,{\tt d}t, \quad
  \widehat y(w) = \int_{0}^{\infty} e^{-wt} y(t)\,{\tt d}t
  $$
  leads to the input-output relation in the frequency domain
  $$
    \widehat y(w) = T_{\Sigma}(w) \widehat u(w)
  $$
  where
  \begin{equation*}
      T_{\Sigma}(w) = D + C (wI - A)^{-1} B
   \end{equation*}
   is the {\em transfer function} of the linear system $\Sigma$
   \eqref{sys}.
   The converse question of when an $\cL(\cU, \cY)$-valued
function
   $f$ on the right halfplane $\Pi$ can be realized as $f=
T_{\Sigma}$
   for some system $\Sigma =
   \left[ \begin{matrix} A & B \\ C & D
\end{matrix}\right]$ has generated much interest over the
years.  The obvious necessary condition is that $w \mapsto f(w)$
be analytic on some right halfplane but this is not sufficient if
we limit our attention to systems $\Sigma$ consisting only of
bounded operators $A,B,C,D$.  While it is clear that the right
generalization of the state operator $A$ is that it should be the
generator of a $C_{0}$-semigroup, exactly how to handle the
remaining operators $B,C,D$ so as to get a meaningful theory
containing compelling examples of interest  was not so clear, but
some progress was made already in the 1970s (see \cite{Fuhrmann,
Helton}).  It is now understood that a useful notion of
generalized system matrix $\Sigma$  is that associated with
so-called {\em well-posed} system.  Roughly, a {\em well-posed
linear system} is an i/s/o linear system for which the integral
form of the system operators ${\mathfrak A}, {\mathfrak B},
{\mathfrak C}, {\mathfrak D}$ satisfy natural compatibility
conditions and the integral form of the system matrix
\begin{equation}   \label{wellposedsys}
  \begin{bmatrix}  \mathfrak A^{t} & {\mathfrak B}_{0}^{t} \\
      {\mathfrak C}_{0}^{t} & {\mathfrak D}_{0}^{t} \end{bmatrix}
      \colon \begin{bmatrix} x(0) \\ u|_{[0,t)} \end{bmatrix} \mapsto
      \begin{bmatrix} x(t) \\ y|_{[0,t)} \end{bmatrix}
\end{equation}
makes sense as a bounded operator from $\cX \oplus
L^{2}_{\cU}([0,t))$ to $\cX \oplus L^{2}_{\cY}([0,t))$ for each
$t>0$ (see \cite{Staffans} for complete details).    The ``right''
infinitesimal object (the analogue of the system matrix $\Sigma =
\left[ \begin{matrix} A & B \\ C & D \end{matrix} \right]$
appearing in \eqref{sys}) is the notion of {\em system node} defined
as follows; this notion is well laid out in the work of Staffans
\cite{Staffans, Staffans-ND} where it is acknowledged that much of
the idea was already anticipated in the earlier work of Salamon \cite{Sala87} and Smuljan
\cite{Smuljan}.

We first make some preliminary observations.  A system node $\Sigma$
is still an operator from $\cX \oplus \cU$ to $\cX \oplus \cY$ but
now allowed to be unbounded with some domain $\cD(\Sigma) \subset \cX
\oplus \cU$.  We may then split $\Sigma$ in the form
$$
  \Sigma = \begin{bmatrix} \Sigma_{1} \\ \Sigma_{2} \end{bmatrix}
 $$
 where $\Sigma_{1} \colon \cD(\Sigma) \to \cX$ and $\Sigma_{2} \colon
 \cD(\Sigma) \to \cY$. However we allow the possibility that
 $\cD(\Sigma)$ does not split $\cD(\Sigma) = \left[
\begin{matrix}
 \cD(\Sigma)_{1} \\ \cD(\Sigma)_{2} \end{matrix} \right]$ as the
 direct sum of a linear manifold $\cD(\Sigma)_{1}$ in $\cX$ with a
 linear manifold $\cD(\Sigma)_{2}$ in $\cU$.  To keep the parallel
 with the classical case, we therefore write
 $$
   \Sigma = \begin{bmatrix} A \& B \\ C \& D \end{bmatrix}
   $$
   with the notation $A \& B$ (and similarly $C \& D$) suggesting that
 the common domain of $A \& B$  and $C \& D$ in $\cX \oplus \cU$ may
 not have a splitting.  However $A \& B$ will have a splitting
 $\begin{bmatrix} A_{|\cX} & B \end{bmatrix}$ where $A_{|\cX}$ and $B$
are
 operators  mapping the spaces $\cX$ and $\cU$ into a larger ``rigged'' space
 $\cX_{-1}$ which is part of a so-called {\em Gelfand triple} defined
 as follows.

 We assume that $A$ is an in general unbounded closed operator with dense
 domain $\cD(A): = \cX_{1}$
 in $\cX$ and with nonempty resolvent set.  Then $\cX_{1}$ is a Hilbert space in its own right with
 respect to the $\cX_{1}$-norm given by
 $$
    \| x \|_{1} : = \| (\alpha I - A) x \| _{\cX}
 $$
 where $\alpha$ is any fixed number in the resolvent set of $A$.
  While the norm depends on the choice of $\alpha$, any other choice
  $\alpha'$
 of $\alpha$ leads to the same space but with an equivalent norm, as
 long as $\alpha$ and $\alpha'$ are in the same connected component
 of the resolvent set of $A$.  If $A$ is the generator of a
 $C_{0}$-semigroup (the most interesting case for us), one can take
 the connected component of the resolvent set to be a right half plane.
 Note that $(\alpha I - A)|_{\cX_{1}}$ can be viewed as an isometry from $\cX_{1}$
 onto $\cX$.

 We next construct another Hilbert space $\cX_{-1}$ as the completion
 of $\cX$ in the norm
 $$
   \| x \|_{-1} : = \| (\alpha I - A)^{-1} x \| _{\cX}.
 $$
 If $\{x_{n}\}_{n \in {\mathbb Z}_{+}}$ is a sequence in $\cX_{1}$
 converging to $x \in \cX$ in $\cX$-norm, then the sequence
 $\{(\alpha I - A) x_{n}\}_{n \in {\mathbb Z}_{+}}$ is Cauchy in
 $X_{-1}$-norm and hence converges to an element $x^{-1} \in
 \cX_{-1}$.  One can check that this element $x^{-1} \in \cX_{-1}$ is
 independent of the choice of sequence $\{x_{n}\} \subset \cX_{1}$
 converging in $\cX$-norm to $x$; we denote this element $y \in
 \cX_{-1}$ by $y = (\alpha I - A) x$ and then define an extension
 $A_{|\cX} \colon \cX \to \cX_{-1}$ by
 $$
 A_{|\cX} \colon x \mapsto \alpha x -  (\alpha I - A)x \in \cX_{-1} \text{
 if } x \in \cX.
 $$
 We then have that the extended operator $(\alpha I - A)_{|\cX} $ is
 an isometry from $\cX$ onto $\cX_{-1}$ and we have
 the nested inclusions $\cX_{1} \subset \cX \subset
 \cX_{-1}$ with continuous and dense injections.  We will on occasion
 simplify the notation $A_{|\cX}$ to simply $A$ when the meaning is
 clear; thus for $x \in \cX$ and $x$ not necessarily in $\cX_{1} =
 \cD(A)$, the element $A x$ is still defined but as an element of
 $\cX_{-1}$.

 It is also useful to note the role of these spaces in duality
 pairings.  First, we note that the constructions in the previous
 paragraph can be carried out using the operator $A^{*}$ in place of
 $A$.  When this is done we get spaces $X_{1}^{\star} = (\overline{\alpha}
 I - A^{*})^{-1} \cX$ (with the norm $\|x\|_{1,\star}:=\|(\overline{\alpha}I-A^*)x\|_{\cX}$
 and $X_{-1}^{\star}$ equal to the completion of
 $\cX$ in the $\cX_{-1}^{\star}$-norm $\| x \|_{-1,\star} =
 \|(\overline{\alpha} I - A^{*})^{-1} x\|_{\cX}$ with the properties
 that $(\overline{\alpha} I - A^{*})^{-1}$ is an isometry from $\cX$ onto
 $\cX_{1}^\star$ and $(\overline{\alpha} I - A^{*})$ extends to an
 isometry from $\cX$ onto $\cX_{-1}^{\star}$ with the nesting $\cX_{1}^{\star}
 \subset \cX \subset \cX_{-1}^{\star}$.
 Given any $x \in \cX$, we can view $x$ as a linear
 functional on $\cX_{1}^{\star}$ using the $\cX$-pairing:
 \begin{equation}    \label{Xpairing}
   \ell_{x}(x_{1}^{\star}): = \langle x_{1}^{\star}, x \rangle_{\cX} \text{ for }
   x_{1}^\star \in \cX_{1}^\star.
 \end{equation}
 If we write $x_{1}^{\star} = (\overline{\alpha} I - A^{*})^{-1} y$ with
 $y \in \cX$, then
 \begin{align*}
    | \ell_{x}(x_{1}^{\star})| & =| \langle x_{1}^{\star}, x \rangle_{\cX}| \\
     & = |\langle (\overline{\alpha} I - A^{*})^{-1} y, x
     \rangle_{\cX}| \\
     & = |\langle y, (\alpha I - A)^{-1} x \rangle_{\cX}| \\
     & \le \| y \|_{\cX} \| (\alpha I - A)^{-1} x \|_{\cX} \\
     & =\| x_{1}^{\star}\|_{1,\star} \| x\|_{-1}
 \end{align*}
 with equality if  we take $x_{1}^{\star} = (\overline{\alpha} I -
 A^{*})^{-1}(\alpha I-A)^{-1} x$ (or $y = (\alpha I-A)^{-1}x$).  We conclude that the
 linear-functional norm of $\ell_{x}$ is equal to the $\cX_{-1}$-norm of $\cX$:
 $$
 \| \ell_{x} \|_{(\cX_{1}^{\star})^{*}} = \| x  \|_{-1}
 $$
 and $\cX$ can be identified with a subspace of
 $(\cX_{1}^{\star})^{*}$.  It is not difficult to see that this subspace
 is dense and hence, after taking completions, we have that
 $\cX_{-1}$ is naturally isomorphic to the dual space of
 $\cX_{1}^{\star}$ via the $\cX$-pairing \eqref{Xpairing}.
 For our application to infinite-dimensional linear systems, in
 practice the unbounded closed operator $A$ in this construction will
 also be taken to be the generator of a $C_{0}$-semigroup, so as to
 make sense of a differential equation of the form $\frac{dx}{dt}(t)
 = A x(t)$.

 We are now ready to introduce the notion of {\em system node}.

\begin{definition}  \label{D:sysnode}  By a {\em system node}
    $\Sigma$ on the collection of three Hilbert spaces $(\cU, \cX,
    \cY)$, we mean a linear operator
$$ \Sigma: = \begin{bmatrix} A \& B \\ C \& D \end{bmatrix} \colon
\begin{bmatrix} \cX \\ \cU \end{bmatrix} \supset \cD(\Sigma) \to
    \begin{bmatrix} \cX \\ \cY \end{bmatrix}
$$
such that:
\begin{enumerate}
    \item $\Sigma$ is a closed operator with domain $\cD(\Sigma)$ dense
    in $\left[ \begin{matrix} \cX \\
\cU \end{matrix} \right]$,
\item If we define an operator $A$ with domain $\cD(A) = \left\{ x \in \cX
\colon \sbm{ x \\ 0} \in \cD(\Sigma) \right\}$ by
\begin{equation}  \label{defA}
    A x = \Sigma
\sbm{ x \\ 0 } \text{ for } x \in \cD(A),
\end{equation}
then $A$ is the generator of a
$C_{0}$-semigroup on $X$.

     \item Let $A_{|\cX} \colon X \to \cX_{-1}$ be the extension of the
     operator $A \colon \cX_{1} = \cD(A) \to \cX$ (as in \eqref{defA})
     as described in the
     previous paragraph.  Then there is a bounded linear operator $B
     \colon \cU \to \cX_{-1}$ so that we recover the operator $A \&
     B$ as the restriction of the operator $\begin{bmatrix} A_{|\cX}
     & B \end{bmatrix} \colon \sbm{ \cX \\ \cU } \to \cX_{-1}$ to
     $\cD(\Sigma)$:
  $$
   \Sigma = \begin{bmatrix} A_{|\cX} & B \end{bmatrix}|_{\cD(\Sigma)}.
  $$
   \item $C \& D$ is a bounded operator from $\cD(\Sigma)$ to $\cY$
   $$
      C \& D \in \cL(\cD(\Sigma) , \cY)
   $$
 where $\cD(\Sigma)$ carries the graph norm.
\item The domain $\cD(\Sigma)$ of $\Sigma$ is characterized as
 \begin{equation}   \label{defdomain}
  \cD(\Sigma) = \left\{ \begin{bmatrix} x \\ u \end{bmatrix} \in
  \begin{bmatrix} \cX \\ \cU \end{bmatrix} \colon A_{| \cX} x + B u
      \in \cX \right\}.
\end{equation}
 \end{enumerate}
 \end{definition}

 A consequence of this definition of system node is the following
 fact:
\begin{equation}   \label{fact'}
    \text{ \em Given } u \in \cU, \text{ \em there exists } x_{u} \in
    \cD(\Sigma) \text{ \em so that } \sbm{ x_{u} \\ u } \in \cD(\Sigma).
\end{equation}
Indeed, it suffices to take $x_{u} = \left( (\alpha
I-A)_{|\cX}\right)^{-1} B u$. To see this, we use the criterion in
part (5) of Definition \ref{D:sysnode}  to check that this
$\sbm{x_{u}
\\ u}$ is in $\cD(\Sigma)$:
\begin{align*}
    \begin{bmatrix} A_{|\cX} & B \end{bmatrix} \sbm{ x_{u} \\ u } & =
    A_{|\cX} \left( (\alpha I - A)_{|\cX} \right)^{-1} B u + Bu \\
& = ( A - \alpha I)_{|\cX} \left( (\alpha I - A)_{|\cX}
\right)^{-1} B u + \alpha \cdot \left( (\alpha I - A)_{|\cX}
\right)^{-1} B u + Bu
\\
& = \alpha \cdot \left( (\alpha I - A)_{|\cX} \right)^{-1} B u \in
\cX.
\end{align*}

In the definition of system node, we took pains to write the top
component in the form $A \& B$ to indicate that its domain in
$\sbm{ \cX \\ \cU }$ does not split; yet in the end we found a
splitting by extending to a larger space $\sbm{  \cX_{-1} \\ \cU}$
and writing $A \& B$ as the restriction of an extended operator
$\begin{bmatrix} A_{|\cX} & B \end{bmatrix} \colon \sbm{ \cX_{-1}
\\ \cU} \to \cY$ whose domain does split.  Similarly, there is at
least a partial splitting for the operator $C \& D \colon
\cD(\Sigma) \to \cY$. Indeed, we have seen that $X_{1} = \cD(A) =
\left\{ x \in \cX \colon \sbm{ x \\ 0 } \in \cD(\Sigma\right\}$ is
a dense subset of $\cX$. We may therefore define an operator $C
\colon \cX_{1} \to \cY$ by
\begin{equation}   \label{Cdef}
  C x = C \& D \sbm{ x \\ 0} \text{ for } x \in \cX_{1}.
\end{equation}
Since $C\& D$ is a bounded operator from $\cD(\Sigma)$ (graph norm)
to $\cY$, it follows that $C$ so defined is a bounded operator from
$\cX_{1}$ (graph norm induced by the operator $A$) to $\cY$. In
practice we assign no independent meaning to the $D$ in $C \& D$
except under some additional hypotheses (e.g., for the case of a
{\em regular system}---see the paper of Weiss \cite{WeissTAMS94} or
\cite[Section 5.6]{Staffans}). To this point we have
at least versions of all the usual constituents for a linear
input/state/output linear system:
\begin{align}
& A \colon \cX_{1} \to \cX \text{ main operator or state dynamics,}
\notag  \\
& B \colon \cU \to \cX_{-1} \text{ input or control operator,} \notag
\\
& C \colon \cX_{1} \to \cY \text{ output or observation operator,}
\notag \\
& C \& D \colon \cD(\Sigma) \to \cY \text{ combined
observation/feedthrough operator,}
\label{sysops}
\end{align}
 and we lack in general an
independent well-defined feedthrough operator $D$.

The formula for $x_{u}$ in \eqref{fact'} can use any point $w$ in
the connected component (e.g., an appropriate right half plane) of
the resolvent set of $A$ containing $\alpha$. For clarity, let us
write $x_{u}(w) = \left( (wI - A)_{|\cX}\right)^{-1}Bu$ to indicate
the dependence of $x_{u}$ on the point $w$ in the right half
plane.  From the fact that $\sbm{ x_{u}(w) \\ u } \in \cD(\Sigma)
= \cD( C \& D)$ it follows that the expression
$$
T_{\Sigma}(w) \colon u \mapsto  C \& D \begin{bmatrix} x_{u}(w) \\ u
\end{bmatrix} = C \& D  \begin{bmatrix} \left( (wI -
A)_{|\cX}\right)^{-1}Bu \\ u \end{bmatrix}
$$
is well defined and defines the {\em transfer function} of the system
node.    Let $\alpha$ be any fixed point in the resolvent set
of $A$.   Then we can recover the value of the transfer function at
any point $w$ in the same connected component of the resolvent set of $A$ from its value
at the fixed point $\alpha$ according to the recipe
\begin{align}
  T_{\Sigma}(w) u & = \left( T_{\Sigma}(w) - T_{\Sigma}(\alpha) \right)u +
  T_{\Sigma}(\alpha) u \notag \\
  & = C (x_{u}(w) - x_{u}(\alpha) ) + T_{\Sigma}(\alpha) u \notag \\
  & = \left( (\alpha - w) C (w I - A)^{-1} \left((\alpha I - A)_{|\cX}\right)^{-1}B
  + T_{\Sigma}(\alpha) \right) u
  \label{transfunc'}
\end{align}

Conversely, start with any semigroup generator $A$ on $\cX$ with
domain $\cD(A) = \cX_{1}$ with induced Gelfand rigging $\cX_{1}
\subset \cX \subset \cX_{-1}$, any input operator $B \colon \cU \to
\cX_{-1}$, and an output operator $C \colon \cX_{1} \to \cY$ along
with a value $T_{\Sigma}(\alpha) \in \cL(\cU, \cY)$ for the transfer
function at the point $\alpha$. Define
\begin{align}
& \cD(\Sigma) = \left\{ \begin{bmatrix} x \\ u \end{bmatrix} \colon
A_{|\cX} x + B u \in \cX \right\} \text{ with } \notag \\
& A \& B = \begin{bmatrix} A_{|\cX} & B
\end{bmatrix}|_{\cD(\Sigma)}, \quad C \& D \sbm{ x \\ u } = C (x
-x_{u}(\alpha)) + T_{\Sigma}(\alpha) u. \label{sysnodeABCD}
\end{align}
Then $\Sigma$ so defined is a system node with value of its transfer
function at $\alpha$ equal to the prescribed value
$T_{\Sigma}(\alpha)$.  Note here that $x - x_{u}(\alpha)$ is in
$\cX_{1}$ since $\sbm{ x \\ u}$ and $\sbm{ x_{u}(\alpha) \\ u}$ are in
$\cD(\Sigma)$ and hence so also is
\begin{equation} \label{keyprop}
    \begin{bmatrix} x - x_{u}(\alpha) \\ 0 \end{bmatrix} =
    \begin{bmatrix}x \\ u \end{bmatrix} - \begin{bmatrix}
        x_{u}(\alpha) \\ u  \end{bmatrix}
    \end{equation}
resulting in $x - x_{u}(\alpha) \in \cX_{1}$.
Moreover, we recover the transfer function
$T_{\Sigma}$ at a general point from $A,B,C$ and $T_{\Sigma}(\alpha)$ via
the formula \eqref{transfunc'}.
(see \cite[Lemma 2.2]{StaffansMCSS} for more complete details).

Given a system node $\Sigma$, it is possible to make sense of the
associated system of differential equations
\begin{equation}   \label{sysnodesys}
\begin{bmatrix} \dot x(t) \\ y(t) \end{bmatrix} = \Sigma
    \begin{bmatrix} x(t) \\ u(t) \end{bmatrix}, \quad x(0) = x_{0}
\end{equation}
as long as $u \in C^{2}([0, \infty); \cU)$ and $\left[
\begin{matrix} x(0) \\ u(0) \end{matrix} \right] \in
\cD(\Sigma)$ (see \cite[Proposition 2.6]{MSW}).
 Application of the Laplace transform
 $$
 x(t) \mapsto \widehat x(w): = \int_{0}^{\infty} e^{-wt} x(t)\, {\tt
 d}t
 $$
 to the system equations \eqref{sysnodesys} leads us to the
 input-output property of the transfer function \eqref{transfunc'}:
 $$
  \widehat y(w) = C (wI - A)^{-1} x(0) + T_{\Sigma}(w) \widehat u(w)
 $$
 for $w$ with sufficiently large real part.

 We mention that any
 well-posed linear system \eqref{wellposedsys} is the integral form
 of the dynamical system associated with a system
 node (see e.g.~\cite{Staffans});  however there are system nodes for which the associated
 dynamical system \eqref{sysnodesys} is not well-posed (i.e., one or more of the block operators
 ${\mathfrak B}^{t}_{0}$, ${\mathfrak C}^{t}_{0}$, ${\mathfrak
 D}^{t}_{0}$ appearing in \eqref{wellposedsys} fail to exist as
 bounded operators between the appropriate spaces), despite the fact
 that the infinitesimal form of the system equations
 \eqref{sysnodesys} does make sense.

 The following examples of system nodes will be useful
 in the sequel.  In this discussion we make use of Kre\u{\i}n-space
 geometry notions discussion in Section \ref{S:Krein}.

 \begin{example} \label{E:scatconserv}
     {\em Suppose that $\Sigma = \left[ \begin{matrix} A \& B \\ C \& D
 \end{matrix} \right] \colon \left[ \begin{matrix} \cX \\
 \cU \end{matrix} \right] \supset \cD(\Sigma) \to \left[
 \begin{matrix} \cX \\ \cY \end{matrix} \right]$ is a
     closed operator such that its graph space
     $$ \cG(\Sigma) : = \begin{bmatrix} A \& B \\ C \& D \\
     \begin{matrix} I & 0 \end{matrix}  \\ \begin{matrix} 0 & I
     \end{matrix} \end{bmatrix} \cD(\Sigma) \subset \begin{bmatrix}
     \cX \\ \cY \\ \cX \\ \cU \end{bmatrix}
     $$
     is a Lagrangian subspace of ${\boldsymbol \cK}: =  \cX \oplus \cY \oplus \cX \oplus
     \cU$, where ${\boldsymbol \cK}$ is given a Kre\u{\i}n space
     structure using the signature operator
     $$ \cJ = \left[ \begin{matrix} 0 & 0 & I_{\cX} & 0 \\ 0 &
     I_{\cY} & 0 & 0 \\ I_{\cX} & 0 & 0 & 0 \\ 0 & 0 & 0 & -I_{\cU}
     \end{matrix} \right].
     $$
      Then $\Sigma$ is a system node (see
     \cite[Proposition 4.9]{BS}).  In fact,
     Schur-class functions over the right halfplane $
     \Pi$ are characterized as those functions $s$ having a
     realization $s(w) = T_{\Sigma}(w)$ as in \eqref{transfunc'} with a system node $\Sigma$ of
     this form (see \cite[Theorem 4.10]{BS}).
     These systems are also characterized by the energy-balance
     property that the block matrix
     $\sbm{ {\mathfrak A}^{t} &  {\mathfrak B}_{0}^{t} \\ {\mathfrak C}_{0}^{t}
     & {\mathfrak D}_{0}^{t}}$ associated with the integral
     form \eqref{wellposedsys} of the system equations is unitary for each $t$
     (see \cite{Staffans-ND}).  For this reason any such system node is said to be a
     {\em $\Pi$-scattering conservative system node}. In particular,
     any $\Pi$-scattering conservative system is well-posed.
    For more recent information concerning $\Pi$-scattering
     conservative system nodes and the related notion of
     $\Pi$-scattering dissipative system nodes (where the block
     matrix $\sbm{ {\mathfrak A}^{t} &  {\mathfrak
     B}_{0}^{t} \\ {\mathfrak C}_{0}^{t} & {\mathfrak D}_{0}^{t}}$ in
     \eqref{wellposedsys} is assumed only to be contractive), we refer to the
     recent work of Malinen, Staffans, and Weiss
    (\cite{MSW}, \cite{Staffans2013}, \cite{SW2012}).  In particular, the result
     of \cite{Staffans2013} is that a linear operator $S \colon
     \left[ \begin{matrix} \cX \\ \cU \end{matrix} \right]
     \to \left[ \begin{matrix} \cX \\ \cY \end{matrix}
     \right]$ is a $\Pi$-scattering passive system node if and only
     if it is closed with its graph equal to a maximal $\cJ$-negative
     subspace of $\cX \oplus \cY \oplus \cX \oplus \cU$.  Much of
     this work also has a focus of fitting physical examples into
     this framework; a recent accomplishment was to fit Maxwell's
     equations into this framework (see \cite{WS2013}).
      }\end{example}

  \begin{example} \label{E:impconserv}
  {\em  Suppose that  $\Sigma = \left[ \begin{matrix} A \& B \\ C \& D
 \end{matrix} \right] \colon \left[ \begin{matrix} \cX \\
 \cU \end{matrix} \right] \supset \cD(\Sigma) \to \left[
 \begin{matrix} \cX \\ \cU \end{matrix} \right]$ is a
     closed operator  with output space
     $\cY$ taken to be the same as the input space $\cU$ such that
     \begin{enumerate}
     \item
      its graph $\cG(\Sigma)$ is a Lagrangian
     subspace of ${\boldsymbol \cK}$, but where now ${\boldsymbol
     \cK}$ is given the Kre\u{\i}n-space inner product induced by the
     signature operator $\cJ'$ given by
     \begin{equation}   \label{cJ'}
     \cJ' = \left[ \begin{matrix}  0 & 0 & I_{\cX} & 0 \\
          0 & 0 & 0 & -I_{\cU} \\ I_{\cX} & 0 & 0 & 0 \\
      0 & -I_{\cU} & 0 & 0 \end{matrix} \right],
  \end{equation}
      and
   \item for each $u \in \cU$ there is an $x_{u} \in \cX$ so that
   $\left[ \begin{matrix} x_{u} \\ u \end{matrix} \right]$
   is in $\cD(\Sigma)$.
   \end{enumerate}
   Then $\Sigma$ is a system node (see \cite[Proposition 4.11]{BS}).
   In fact, Herglotz functions over the right halfplane  which also satisfy the growth condition at
   infinity
   \begin{equation*}
       \lim_{t \to +\infty} t^{-1} f(t) u = 0 \text{ for each } u \in
       \cU
    \end{equation*}
   are characterized as those functions $f$
   having a realization $f(w) = T_{\Sigma}(w)$ as in
   \eqref{transfunc'} with a system node $\Sigma$ of this form
   (see \cite[Theorem 4.12]{BS}).  The trajectories $(u(t), x(t),
   y(t))$ satisfy the alternative energy-conservation law
   $$
     \| x(t) \|^{2}_{\cX} - \| x_{0} \|^{2}_{\cX} = 2 \int_{0}^{t}
     {\rm Re}\, \langle y(t), u(t) \rangle_{\cU}\, {\tt d}t
   $$
   and $\Sigma$ is called a {\em $\Pi$-impedance-conservative system node} (see
   \cite{Staffans-ND}).
   As the transfer function for a $\Pi$-impedance-conservative system
   node need not be bounded in the right halfplane, it follows that
   $\Pi$-impedance-conservative system nodes need not be well-posed
   in general (see \cite{StaffansMCSS}).
   }\end{example}

   In connection with Example \ref{E:impconserv} we shall have use
   for the following additional fact.

   \begin{corollary}  \label{C:impconserv}  Suppose that $\bY  \colon
       \cD(\bY) \subset \left[ \begin{matrix} \cX \\ \cU
   \end{matrix} \right] \to \left[ \begin{matrix} \cX \\
   \cU \end{matrix} \right]$ is a   closed densely defined
   operator such that
   \begin{enumerate}
       \item $\bY$ is skew-adjoint:  $\bY  = -\bY^{*}$, and
       \item for each $u \in \cU$ there is an $x_{u} \in \cX$ such
       that $\left[ \begin{matrix} x_{u} \\ u \end{matrix}
       \right] \in \cD(\bY)$.
   \end{enumerate}
   Set $\cJ'_{0} = \left[ \begin{matrix} I_{\cX} & 0 \\ 0 & -
   I_{\cU} \end{matrix} \right]$ and set $\Sigma :
   = - \bY \cJ'_{0}$.  Then $\Sigma$ is a
   $\Pi$-impedance-conservative system node as in Example
   \ref{E:impconserv}.

   Conversely, if $\Sigma$ is a $\Pi$-impedance-conservative system node as in Example
   \ref{E:impconserv}, then $\bY:=-\Sigma
   J'_0\colon\cD(\bY)\subset\left[ \begin{matrix} \cX \\ \cU
   \end{matrix} \right] \to \left[ \begin{matrix} \cX \\
   \cU \end{matrix} \right]$ is a closed operator with the dense
   domain
   $$\cD(\bY)=\Big\{\left[ \begin{matrix} x \\ u   \end{matrix} \right] \in \left[ \begin{matrix} \cX \\
   \cU \end{matrix}\right]\colon J'_0\left[ \begin{matrix} x \\ u   \end{matrix}
   \right]=\left[ \begin{matrix} x \\ -u   \end{matrix}
   \right]\in\cD(\Sigma)\Big\}$$
   satisfying conditions (1) and (2).
  \end{corollary}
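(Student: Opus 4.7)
The plan is to transfer the Krein-space characterization of $\Pi$-impedance-conservative system nodes from Example \ref{E:impconserv} to the Krein-space characterization of skew-adjoint operators from Example \ref{E:skewadjoint}, using the bounded self-adjoint involution $\cJ'_0$ as a bridge. Since $\cJ'_0$ is a bounded invertible operator on $\cX \oplus \cU$, the map $\bY \mapsto -\bY \cJ'_0 =: \Sigma$ is an involutive bijection between closed operators on $\cX \oplus \cU$, and it automatically preserves closedness and density of the domain: we have $\cD(\Sigma) = \cJ'_0 \cD(\bY)$ and $\cJ'_0$ is an isometric isomorphism of $\cX \oplus \cU$.

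The existence conditions (2) in the two statements match under the substitution $u \mapsto -u$: indeed $\sbm{x \\ u} \in \cD(\bY)$ if and only if $\cJ'_0 \sbm{x \\ u} = \sbm{x \\ -u} \in \cD(\Sigma)$, so given $u \in \cU$ and property (2) for $\Sigma$, choosing $x'$ with $\sbm{x' \\ -u} \in \cD(\Sigma)$ produces $\sbm{x' \\ u} \in \cD(\bY)$, and conversely.

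The heart of the argument is matching the two Lagrangian-subspace conditions. Substituting $v = \cJ'_0 w$ in the parametrization of $\cG(\bY)$, the graph of $\Sigma$ becomes
\[
\cG(\Sigma) = \left\{ \begin{bmatrix} -\bY v \\ \cJ'_0 v \end{bmatrix} : v \in \cD(\bY) \right\} = \Phi(\cG(\bY)),
\]
where $\Phi \colon (\cX \oplus \cU) \oplus (\cX \oplus \cU) \to {\boldsymbol \cK}$ is the bounded invertible linear map $\Phi \sbm{a \\ b} = \sbm{-a \\ \cJ'_0 b}$. A direct computation---applying $\cJ'$ to $\Phi\sbm{a \\ b}$ and pairing with $\Phi\sbm{a' \\ b'}$---yields $[\Phi z, \Phi z']_{\cJ'} = -[z, z']_{\mathrm{swap}}$, where $[\cdot,\cdot]_{\mathrm{swap}}$ is the Krein inner product with signature $\sbm{0 & I \\ I & 0}$ from Example \ref{E:skewadjoint}. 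As noted in the discussion of $\cK_{2}$ versus $\cK_{1}$ preceding Proposition \ref{P:unitary/skewadj}, negating a Krein-space inner product preserves isotropic and Lagrangian subspaces, so $\Phi$ carries Lagrangian subspaces bijectively to Lagrangian subspaces. Invoking Examples \ref{E:skewadjoint} and \ref{E:impconserv}, we conclude that $\bY$ is skew-adjoint if and only if $\cG(\bY)$ is Lagrangian in the swap signature if and only if $\cG(\Sigma)$ is Lagrangian in $\cJ'$, which, together with closedness and property (2), is exactly the condition making $\Sigma$ a $\Pi$-impedance-conservative system node.

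The only real obstacle is the sign bookkeeping in verifying $[\Phi z, \Phi z']_{\cJ'} = -[z, z']_{\mathrm{swap}}$, but this is a short calculation once the action of $\cJ'$ on a column vector is spelled out. Both directions of the corollary follow at once from this bijective correspondence, with closedness, density, and the existence condition transferring routinely through the bounded self-adjoint involution $\cJ'_0$.
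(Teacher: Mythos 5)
Your proof is correct. The sign bookkeeping does work out: with $\Phi\sbm{a \\ b} = \sbm{-a \\ \cJ'_0 b}$ one gets $\cJ'\Phi\sbm{a\\b} = \sbm{b \\ -\cJ'_0 a}$ and hence $[\Phi z, \Phi z']_{\cJ'} = -\langle b, a'\rangle - \langle a, b'\rangle = -[z,z']_{\mathrm{swap}}$, so $\Phi$ is a bicontinuous anti-isometry carrying $\cG(\bY)$ onto $\cG(\Sigma)$ and preserving Lagrangian subspaces; closedness, density, and the matching of condition (2) under $u \mapsto -u$ transfer through the bounded involution $\cJ'_0$ exactly as you say.

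Your route differs from the paper's in one respect worth noting. The paper works directly in the $\cJ'$ signature: it starts from the Hilbert-space graph identity $\left(\sbm{\Sigma \\ I}\cD(\Sigma)\right)^{\perp} = \sbm{I \\ -\Sigma^{*}}\cD(\Sigma^{*})$, computes the $\cJ'$-orthogonal complement of $\cG(\Sigma)$ from it, and reads off that the Lagrangian condition is the operator identity $\Sigma = -\cJ'_0\Sigma^{*}\cJ'_0$, i.e., skew-adjointness of $\Sigma\cJ'_0$. You instead transport the problem to the swap signature and then quote the fourth bullet of Example \ref{E:skewadjoint} ($\cG_T$ Lagrangian iff $T = -T^{*}$). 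These are logically equivalent, but the paper states that bullet only as an exercise and explicitly remarks that ``a closely related result is proved in Corollary \ref{C:impconserv} below,'' so strictly speaking you are citing a fact the corollary is partly intended to establish. This is not a real gap --- the exercise is precisely the $\cJ'_0 = I$ instance of the paper's computation and follows from the same adjoint-graph identity --- but to make your argument self-contained you should either supply that one-line orthogonal-complement computation in the swap signature or run the paper's calculation directly in the $\cJ'$ signature. What your version buys is a cleaner conceptual picture (one fixed Kre\u{\i}n-space isomorphism handles both directions of the equivalence at once); what the paper's version buys is self-containedness and an explicit operator identity for $\Sigma^{*}$ that it reuses later.
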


  \begin{proof}  Given any closed, densely defined operator $\Sigma
      \colon \cD(\Sigma) \subset \left[ \begin{matrix} \cX \\
      \cU \end{matrix} \right] \to \left[ \begin{matrix} \cX \\
      \cU \end{matrix} \right]$, the following translation of
      the definition of adjoint operator to graph spaces is well
      known:
  $$
  \left( \begin{bmatrix} \Sigma \\ I \end{bmatrix} \cD(\Sigma)
  \right)^{\perp} = \begin{bmatrix} I \\ -\Sigma^{*} \end{bmatrix}
  \cD(\Sigma^{*})
  $$
  where here the orthogonal complement is with respect to the
  standard Hilbert space inner product.  More generally,
  compute the $\cJ'$-orthogonal complement (where $\cJ' = \left[
  \begin{matrix} 0 & \cJ'_{0} \\ \cJ'_{0} & 0 \end{matrix}
      \right]$ by the definition \eqref{cJ'} of $\cJ'$) as follows:
      \begin{align*}
\begin{bmatrix} y_{1} \\ y_{2} \end{bmatrix} \in \left(
    \begin{bmatrix} \Sigma \\ I \end{bmatrix} \cD(\Sigma)
    \right)^{\perp \cJ'} & \Leftrightarrow
    \cJ' \begin{bmatrix} y_{1} \\ y_{2} \end{bmatrix} =
    \begin{bmatrix} \cJ_{0}' y_{2} \\ \cJ'_{0} y_{1}
    \end{bmatrix} \in \left( \begin{bmatrix} \Sigma \\ I
    \end{bmatrix} \cD(\Sigma) \right)^{\perp} = \begin{bmatrix} I \\
    - \Sigma^{*} \end{bmatrix} \cD(\Sigma^{*}) \\
 & \Leftrightarrow  \begin{bmatrix} y_{1} \\ y_{2} \end{bmatrix} \in
 \begin{bmatrix} -\cJ'_{0} \Sigma^{*} \cJ'_{0} \\ I \end{bmatrix}
     \cD(\Sigma^* \cJ'_{0}).
 \end{align*}
Thus condition (1) in Example \ref{E:impconserv} for $\Sigma$ to
be a $\Pi$-impedance-conservative system node translates to
$$
\begin{bmatrix} \Sigma \\ I \end{bmatrix} \cD(\Sigma) =
    \begin{bmatrix} - \cJ'_{0} \Sigma^{*} \cJ'_{0} \\ I \end{bmatrix}
    \cD(\Sigma^{*} \cJ'_{0})
$$
or simply
$$
    \Sigma = - \cJ'_{0} \Sigma^{*} \cJ'_{0}.
$$
An equivalent condition is:  {\em the operator $\bY: = \Sigma
\cJ'_{0}$ (or equivalently $-\bY = - \Sigma \cJ'_{0}$) is
skew-adjoint: $\bY = - \bY^{*}$.}

Conversely, if $\bY$ is any
skew-adjoint operator with dense domain in $\left[
\begin{matrix} \cX \\ \cU \end{matrix} \right]$, then
$\Sigma = - \bY \cJ'_{0}$ satisfies condition (1) in Example
\ref{E:impconserv}. Since
 $\left[
\begin{matrix} x \\ u \end{matrix} \right]\in\cD(\Sigma)$ if and
only if  $\left[
\begin{matrix} x\\ -u\end{matrix} \right]\in\cD(\bY)$, condition (2) in Example
\ref{E:impconserv} is equivalent to condition (2) in the statement
of the corollary.
\end{proof}

The next result gives a model for $\Pi$-impedance-conservative system
nodes as described in Corollary \ref{C:impconserv}.

\begin{proposition}   \label{P:impconserv}
    Let $(T, V_{0}, R)$ be a triple of operators such that:
    \begin{enumerate}
    \item
    $T$ is a  densely defined skew-adjoint operator on the
    Hilbert space $\cX$,
    \item  $V_{0} \in \cL(\cU, \cX)$ is a
    bounded linear operator from the input-output space $\cU$ into
    $\cX$, and
    \item $R$ is a bounded skew-adjoint operator on $\cU$.
    \end{enumerate}
    Define an operator $\Sigma \colon \cD(\Sigma) \subset
    \left[ \begin{matrix} \cX \\ \cU \end{matrix} \right]
    \to  \left[ \begin{matrix} \cX \\ \cU \end{matrix}
    \right]$  as follows.  Set
    \begin{equation}   \label{Sig-dom}
    \cD(\Sigma) = \left\{ \begin{bmatrix} x \\ u \end{bmatrix} \in
    \begin{bmatrix} \cX \\ \cU \end{bmatrix} \colon x - V_{0}u \in
    \cD(T) \right\}
\end{equation}
and then define
\begin{equation}  \label{Sig-form}
\Sigma \begin{bmatrix} x \\ u \end{bmatrix} = \begin{bmatrix} T(x
- V_{0}u) + V_{0} u \\ V_{0}^{*} x + V_{0}^{*}T(x - V_{0}u) + R u \end{bmatrix}
\in \begin{bmatrix} \cX \\ \cU \end{bmatrix} \text{ for } \begin{bmatrix}
    x \\ u \end{bmatrix} \in \cD(\Sigma).
\end{equation}
Equivalently,  $\Sigma$ can be defined as the system node
constructed from the data
\begin{align}
    & A = T \in \cL(\cX_{1}, \cX), \quad B = (I-T)V_{0} \in \cL(\cU,
\cX_{-1}), \notag  \\
& C = V_{0}^{*}(I - T)^{*} \in \cL(\cX_{1}, \cU),  \quad
\alpha = 1 \text{ and } T_{\Sigma}(1) = V_{0}^{*} V_{0} + i R \in \cL(\cU)
\label{dataset}
\end{align}
according to the recipe \eqref{sysnodeABCD}.
Then $\Sigma$ is a $\Pi$-impedance-conservative system node.

Conversely, any $\Pi$-impedance-conservative system node
arises in this way from a triple of operators $T, V_{0}, R$
satisfying conditions (1), (2), and (3) above.
\end{proposition}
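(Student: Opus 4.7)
My plan is to reduce both directions to Corollary \ref{C:impconserv}, which reformulates the $\Pi$-impedance-conservative property of $\Sigma$ as skew-adjointness of $\bY := -\Sigma \cJ'_0$ together with the range condition~(2) there.

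\emph{Forward direction.} Given $(T, V_0, R)$, I would first verify that the two presentations of $\Sigma$ coincide. Because $T = -T^*$ is skew-adjoint, $\alpha = 1$ lies in $\rho(T)$, and a direct computation gives $x_u(1) = ((I-T)_{|\cX})^{-1}(I-T) V_0 u = V_0 u$. The domain criterion \eqref{defdomain} then reads $T(x - V_0 u) \in \cX$, i.e., $x - V_0 u \in \cD(T)$, matching \eqref{Sig-dom}; substituting $(I - T)^* = I + T$ into the recipe \eqref{sysnodeABCD} recovers the action \eqref{Sig-form}. Condition~(2) of Corollary~\ref{C:impconserv} is immediate with $x_u := V_0 u$. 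The substantive task is to show $\bY$ is skew-adjoint; under the change of variable $\tilde x := x - V_0 u$, which parametrizes $\cD(\bY)$ by $\cD(T) \times \cU$, the Green-type identity $\langle \bY\xi, \eta\rangle + \langle \xi, \bY \eta\rangle = 0$ for $\xi, \eta \in \cD(\bY)$ decouples into the separate identities $T = -T^*$ and $R = -R^*$, with all $V_0$- and $V_0^*$-cross-terms cancelling pairwise because $B = (I - T) V_0$ and $C = V_0^*(I - T)^*$ are formally dual under $T^* = -T$. Maximality $\cD(\bY^*) = \cD(\bY)$ is forced by the bounded invertibility of $I - T$ on $\cX$.

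\emph{Converse direction.} Suppose $\Sigma$ is $\Pi$-impedance-conservative with main operator $A$ and associated $B, C$ on the Gelfand triple $\cX_1 \subset \cX \subset \cX_{-1}$. The $\cJ'$-Lagrangian identity for $\cG(\Sigma)$ applied to pairs $\sbm{x\\0}, \sbm{x'\\0}$ with $x, x' \in \cD(A)$ gives $\langle Ax, x'\rangle + \langle x, Ax'\rangle = 0$, i.e., $A \subset -A^*$; combining this skew-symmetry with $A$ generating a $C_0$-semigroup (so that $1 \in \rho(A)$) upgrades $A$ to a skew-adjoint operator by the standard range criterion, so $T := A$ meets condition~(1). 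Define $V_0 \in \cL(\cU, \cX)$ by $V_0 u := x_u(1) = ((I - A)_{|\cX})^{-1} B u$. Applying the $\cJ'$-Lagrangian identity to $\sbm{V_0 u \\ u} \in \cD(\Sigma)$, on which $C\&D$ acts as $T_\Sigma(1) u$, yields $\operatorname{Re}\langle T_\Sigma(1) u, u\rangle_\cU = \|V_0 u\|_\cX^2$, whence by polarization $T_\Sigma(1) + T_\Sigma(1)^* = 2 V_0^* V_0$. Setting $R := \tfrac{1}{2i}(T_\Sigma(1) - T_\Sigma(1)^*)$ produces a bounded skew-adjoint operator on $\cU$ with $T_\Sigma(1) = V_0^* V_0 + iR$, and feeding $(T, V_0, R)$ into the forward direction reproduces $\Sigma$.

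The main obstacle is the skew-adjointness of $\bY$ in the forward direction: the adjoint of an unbounded operator interlacing $T$ with the bounded coupling $V_0$ must be shown to have domain exactly $\cD(\bY)$, which ultimately rests on the formal duality $B^* = V_0^*(I - T)^* = C$ built into the data \eqref{dataset}.
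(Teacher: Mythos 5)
Your overall strategy (reduce to Corollary \ref{C:impconserv}, take $x_u=V_0u$ for condition (2), prove skew-adjointness of the flipped operator) is the paper's strategy, but in both directions the decisive maximality step is asserted rather than proved. In the forward direction, the Green-type identity only gives skew-\emph{symmetry}, $\bY\subset-\bY^*$; the content of the proposition is the reverse inclusion $\cD(\bY^*)\subset\cD(\bY)$, and ``forced by the bounded invertibility of $I-T$'' is not an argument. The paper's proof is essentially nothing but this step: for $\sbm{y_1\\y_2}$ in the domain of the adjoint, boundedness of the sesquilinear form $\langle T(x_1-x_2),\,y_2-y_1\rangle+\cdots$ in $\sbm{x_1\\x_2}$, with $x_1-x_2$ ranging over all of $\cD(T)$, forces $y_2-y_1\in\cD(T^*)=\cD(T)$, i.e.\ membership in $\cD$. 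Alternatively one can run a deficiency/range argument, but then one must actually solve $(\bY\pm I)\sbm{x\\u}=\sbm{f\\g}$, and this uses not only invertibility of $I\mp T$ but also invertibility of $I+V_0^*V_0\pm R$ (which holds because $V_0^*V_0\ge 0$ and $R=-R^*$); neither version is supplied in your sketch.

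The converse contains an outright false step: skew-symmetry of $A$ (which is all you get from isotropy applied to pairs $\sbm{x\\0}$, $\sbm{x'\\0}$) together with $A$ generating a $C_0$-semigroup does \emph{not} imply $A=-A^*$. The generator of the right-shift semigroup on $L^2(0,\infty)$, $A=-d/dx$ with domain $\{f\in H^1:f(0)=0\}$, is skew-symmetric, generates a contraction semigroup (so $1\in\rho(A)$), yet is not skew-adjoint; the range criterion needs surjectivity of both $I-A$ and $I+A$, and the semigroup gives only one. To rule this out you must use the maximality half of the Lagrangian condition, which is exactly what the paper does by invoking the duality theory of system nodes to get $A'=-A'^{*}$ \emph{and} $C'=B'^{*}$. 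The second identity is also missing from your argument and is needed: knowing only that your model node and $\Sigma$ share $A$, $B$ and the value $T_\Sigma(1)$ does not force their $C\&D$ rows to agree (two nodes with the same $A,B$ but different observation operators $C$ can have the same transfer-function value at one point), so ``feeding $(T,V_0,R)$ into the forward direction reproduces $\Sigma$'' is unjustified until you show $C=V_0^{*}(I+T)=B^{*}$. A small further point: $\frac{1}{2i}\bigl(T_\Sigma(1)-T_\Sigma(1)^{*}\bigr)$ is self-adjoint, not skew-adjoint; to satisfy condition (3) and match \eqref{Sig-form} you should take $R=\frac12\bigl(T_\Sigma(1)-T_\Sigma(1)^{*}\bigr)$, so that $T_\Sigma(1)=V_0^{*}V_0+R$.
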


\begin{remark} \label{R:impconserv}
    {  \em  We note that, in case $T$ is bounded, the operator $\Sigma$
    given by \eqref{Sig-dom} and \eqref{Sig-form} is simply
\begin{equation}  \label{fact}
\Sigma = \begin{bmatrix} I & 0 \\ 0 & V_{0}^{*} \end{bmatrix}
\begin{bmatrix} T & I-T \\ (I-T)^{*} & -T \end{bmatrix}
    \begin{bmatrix} I & 0 \\ 0 & V_{0} \end{bmatrix} +
    \begin{bmatrix} 0 & 0 \\ 0 & R \end{bmatrix}.
\end{equation}
One way to make sense of the first term in this formula for the general case
where $T$ is allowed to be unbounded
is as follows.  We may view $\widetilde \Sigma = \left[
\begin{matrix} T & I - T \\ (I - T)^{*} & -T \end{matrix}
    \right]  =
 \left[ \begin{matrix} T & I - T \\ I + T & -T
\end{matrix}\right]$ as   an operator from $\left[
\begin{matrix} \cX \\ \cX \end{matrix} \right]$ to
 $\left[
\begin{matrix} \cX_{-1} \\ \cX_{-1} \end{matrix} \right]$,
    where $\cX_{-1}$ is the rigged level-($-1$) space associated with
    the skew-adjoint operator $T$ as explained in discussion at the
    beginning of this section.  It is natural to introduce a domain
 \begin{equation}   \label{domain}
    \cD = \left\{
    \begin{bmatrix} x_{1} \\ x_{2} \end{bmatrix} \colon
    \widetilde \Sigma \begin{bmatrix} x_{1} \\ x_{2} \end{bmatrix}
    \in \begin{bmatrix} \cX \\ \cX \end{bmatrix} \right\} =
    \left\{ \begin{bmatrix} x_{1} \\ x_{2} \end{bmatrix} \colon
    T(x_{1} - x_{2}) \in \cX \right\}.
 \end{equation}
and define an operator $\widetilde \Sigma_{0} \colon \cD \subset
\left[ \begin{matrix} \cX \\ \cX \end{matrix} \right] \to \left[
\begin{matrix} \cX \\ \cX \end{matrix} \right]$ by $\widetilde
\Sigma_{0} = \widetilde \Sigma|_{\cD}$. Then we may define the
operator $\Sigma$ via the formula \eqref{fact} with domain
given by
$$
  \cD(\Sigma) = \left\{ \begin{bmatrix} x \\ u \end{bmatrix} \colon
  \begin{bmatrix} I & 0 \\ 0 & V_{0} \end{bmatrix} \begin{bmatrix} x
      \\ u \end{bmatrix} \in \cD(\widetilde \Sigma_{0}) \right\}.
$$
} \end{remark}

\begin{proof}[Proof of Proposition \ref{P:impconserv}]
    We first show that $\Sigma$ defined as in \eqref{Sig-dom},
    \eqref{Sig-form} satisfies conditions (1) and (2) in Corollary
    \ref{C:impconserv}.  As for condition (2), note that if we set
    $x_{u} = V_{0}u$ for each $u \in \cU$, then $x_{u} - V_{0}u = 0
    \in \cD(T)$, so condition (2) is satisfied.  It remains to
    verify condition (1).

    Toward this end, by the representation \eqref{fact} for $\Sigma$
    as explained in Remark \ref{R:impconserv}, it suffices to show
    that the operator
    $$
    \Sigma':= \begin{bmatrix} -I & 0 \\ 0 & I \end{bmatrix} \begin{bmatrix} T
     & I-T \\ I+T & -T + R \end{bmatrix} = \begin{bmatrix} -T & T-I \\
     T+I & -T + R \end{bmatrix}
$$
with domain $\cD$ as in \eqref{domain} is skew-adjoint.  Note that
$\left[ \begin{matrix} y_{1} \\ y_{2}
\end{matrix}\right] \in \cD\left( \left[ \begin{matrix} -T & T-I \\
T+I & -T  + R \end{matrix} \right]^{*} \right)$ means that the
sesquilinear form
\begin{align*}
& \left\langle \begin{bmatrix} -T & T-I \\ T+I & -T + R \end{bmatrix}
\begin{bmatrix} x_{1} \\ x_{2} \end{bmatrix}, \begin{bmatrix} y_{1}
    \\ y_{2} \end{bmatrix} \right\rangle \\
& \quad  = \langle T(x_{1} - x_{2}), -y_{1} + y_{2} \rangle
- \langle x_{2}, y_{1} \rangle + \langle  x_{1}, y_{2} \rangle
+ \langle R x_{2}, y_{2} \rangle
\end{align*}
defined for $\sbm{x_{1} \\ x_{2}}  \in
\cD\left( \sbm{ -T & T-I \\ T + I & -T + R} \right)$  is continuous in the argument
$\sbm{ x_{1} \\ x_{2} }$ in the $\cX \oplus \cX$ norm.
As the second, third, and fourth terms are automatically
    bounded in the $\sbm{ x_{1} \\ x_{2}}$-argument and the element $x_{1} - x_{2}$
is an arbitrary element of $\cD(T)$ (e.g., take $x_{2} = 0$ and
$x_{1} \in \cD(T)$), it follows that necessarily $y_{2} - y_{1} \in
\cD(T^{*}) = \cD(T)$ (i.e., $\sbm{ y_{1} \\
y_{2} } \in \cD$) and the calculation above
continues as
\begin{align*}
& \left\langle \begin{bmatrix} -T & T-I \\ T+I & -T + R \end{bmatrix}
\begin{bmatrix} x_{1} \\ x_{2} \end{bmatrix}, \begin{bmatrix} y_{1}
    \\ y_{2} \end{bmatrix} \right\rangle \\
    & \quad =
    \langle x_{1} - x_{2}, T^{*}(y_{2}-y_{1})\rangle - \langle x_{2}, y_{1}
    \rangle + \langle x_{1}, y_{2} \rangle  + \langle R x_{2}, y_{2}
    \rangle \\
 & = \left\langle \begin{bmatrix} x_{1} \\ x_{2} \end{bmatrix},
 \begin{bmatrix} -Ty_{2} + Ty_{1} + y_{2} \\ T y_{2} - Ty_{1} - y_{1}
     + R^{*} y_{2}  \end{bmatrix} \right\rangle
  = \left\langle \begin{bmatrix} x_{1} \\ x_{2} \end{bmatrix}, \,
  - \begin{bmatrix} -T & T-I \\ T+I & -T  + R \end{bmatrix}
  \begin{bmatrix} y_{1} \\ y_{2} \end{bmatrix} \right\rangle
\end{align*}
where we use $R$ is bounded with $R = -R^{*}$ in the last two steps.
The skew-adjointness of the operator $\Sigma'$ now follows as
wanted. As a consequence of Corollary \ref{C:impconserv} it
follows in particular that $\Sigma$ is a system node. The
equivalence of the original definition \eqref{Sig-form} of
$\Sigma$ with the alternative formulation based on the data set
\eqref{dataset} is a simple consequence of the identities
\eqref{sysnodeABCD} along with computation of the
value of the transfer function at the point $1 \in \Pi$:
$T_{\Sigma}(1) = V_{0}^{*} V_{0} + i R$; this in turn is a routine
verification which we leave to the reader.

For the converse statement, we let $\Sigma' = \left[ \begin{matrix} A' \& B' \\
C' \& D'
\end{matrix} \right]$ be any $\Pi$-impedance-conser\-vative system node.
As $\left[ \begin{matrix} -(A' \& B') \\ C' \& D'
\end{matrix} \right]$ is skew-adjoint, from the duality theory
of system nodes (see e.g.~\cite{Staffans}) one can see that
necessarily $A' = -A^{\prime*}$ and $C' = B^{\prime *}$; here $B'
\in \cL(\cU, \cX'_{-1})$, $C '\in \cL(\cX'_{1}, \cU)$ and
computation of the adjoint $B^{\prime *}$ is with respect to the duality
pairing between $\cX'_{-1}$ and $\cX'_{1}$ via the $\cX'$ pairing
\eqref{Xpairing} (note that $\cX'_{1} = \cX^{\prime \star}_{1}$ since $A' = -
A^{\prime *}$).  Set $T = A'$. As $T$ is skew-adjoint, both $1$
and $-1$ are in the resolvent set of $T$ and we may define a
bounded operator $V_{0} \in \cL(\cU, \cX')$ by $V_{0} = (I-T)^{-1}
B'$. We then have $B' = (I-T) V_{0}$ and $C' = V_{0}^{*} (I+T)$.
We now have all the ingredients to define another
$\Pi$-impedance-conservative system node $\Sigma_0 = \left[
\begin{matrix} I & 0  \\ 0 &  V_{0}^{*} \end{matrix}
    \right] \left[ \begin{matrix} T & I-T \\ I+T & -T
\end{matrix} \right] \left[ \begin{matrix} I & 0 \\ 0 &
V_{0} \end{matrix} \right]$ as in \eqref{Sig-dom},
\eqref{Sig-form} (with $R$ taken equal to zero).
When we write $\Sigma_{0}$ in the form
$$
  \Sigma_{0} = \begin{bmatrix} A_{0} \& B_{0} \\ C_{0} \& D_{0}
\end{bmatrix},
$$
we see that the construction gives that $\cD(\Sigma') =
\cD(\Sigma_{0})$, i.e., $\cD( A_{0} \& B_{0}) = \cD(A' \& B')$, with
$A_{0} \& B_{0} = A' \& B'$, so
\begin{align*}
& A_{0} : = A_{0} \& B_{0}|_{\cD(\Sigma_{0}) \cap
\left[ \begin{smallmatrix} \cX \\ 0 \end{smallmatrix} \right] } =
A' \& B'|_{\cD(\Sigma_{0}) \cap
\left[ \begin{smallmatrix} \cX \\ 0 \end{smallmatrix} \right]} =: A',
\quad B_{0} = B' \in \cL(\cU, \cX_{-1}), \\
&  C_{0} : = C_{0} \& D_{0}|_{\cD(\Sigma_{0}) \cap
\left[ \begin{smallmatrix} \cX \\ 0 \end{smallmatrix}
\right]} = C' \& D'|_{\cD(\Sigma) \cap
\left[ \begin{smallmatrix} \cX \\ 0 \end{smallmatrix}
\right]} =: C'.
\end{align*}
As observed in \eqref{keyprop}, for any fixed choice of $\alpha \in
\Pi$ (e.g., $\alpha = 1$) an element $\left[ \begin{smallmatrix} x \\ u \end{smallmatrix}
\right] \in \cD(\Sigma')$ can be decomposed as
$$
\begin{bmatrix} x \\ u \end{bmatrix} = \begin{bmatrix} x - (\alpha I
    - A)^{-1} B u \\ 0 \end{bmatrix} + \begin{bmatrix} (\alpha I -
    A)^{-1} B u \\ u \end{bmatrix}
$$
where each summand is again in $\cD(\Sigma') = \cD(\Sigma_{0})$.  Then
making use of the formulas \eqref{sysnodeABCD} we compute
$$
 \Sigma \begin{bmatrix} x \\ u \end{bmatrix} = \begin{bmatrix} A x +
 B u \\ C( x - (\alpha I - A)^{-} B u) \end{bmatrix} + \begin{bmatrix}
 0 \\ T_{\Sigma}(\alpha) u \end{bmatrix}
$$
and similarly
$$
  \Sigma_{0} \begin{bmatrix} x \\ u \end{bmatrix} = \begin{bmatrix}
  Ax + B u \\ C (x - (\alpha I - A)^{-1} B u) \end{bmatrix} +
  \begin{bmatrix} 0 \\ T_{\Sigma_{0}}(\alpha) u \end{bmatrix}
$$
from which we read off
$$
\Sigma \begin{bmatrix} x \\ u \end{bmatrix} = \Sigma_{0}
\begin{bmatrix} x \\ u \end{bmatrix} + \begin{bmatrix} 0 \\ \left(
    T_{\Sigma}(\alpha) - T_{\Sigma_{0}}(\alpha) \right) u
\end{bmatrix}.
$$
Thus $\Sigma = \Sigma_{0} + \left[ \begin{smallmatrix} 0 & 0 \\ 0 & R
\end{smallmatrix} \right]$ with $R: = T_{\Sigma}(\alpha) -
T_{\Sigma_{0}}(\alpha)$ equal to a bounded operator on $\cU$. As both
$ \Sigma \left[ \begin{smallmatrix} I & 0 \\ 0 & -I \end{smallmatrix}
\right]$ and $ \Sigma_{0} \left[ \begin{smallmatrix} I & 0 \\ 0 & -I \end{smallmatrix}
\right]$ are skew-adjoint, it is necessarily the case that $R =
-R^{*}$ as well.
\end{proof}

 We need a generalization of the formula for the transfer function
 \eqref{transfunc'} to the setting where the resolvent
 $(w I - A)^{-1}$ is replaced by the structured resolvent $( Y(w) -
 A)^{-1}$.  For the statement of this result it is convenient to
 assume that $A$ is {\em maximal dissipative}.
 Recall from the discussion in Example \ref{E:skewadjoint}
 above that a densely defined closed Hilbert-space operator $A$ is
 maximal dissipative if it  is dissipative (${\rm Re} \langle Ax, x
 \rangle \le 0$ for all $x \in \cD(A)$) and $I+ A$ is onto; it then
 follows that $wI + A$ is onto for all $w \in \Pi$ and that  $A$
 is the generator of a contractive semigroup (see \cite{Phillips59}).

 \begin{proposition}   \label{P:strucresol}
     Let $A$ be a maximal dissipative operator on the Hilbert space
     $\cX$ (and hence $A$ is the generator of a contractive
     $C_{0}$-semigroup with resolvent set containing the right
     halfplane $\Pi$), and suppose that $(Y_{1}, \dots, Y_{d})$ is a
     $d$-fold positive decomposition of $I_{\cX}$.  For $w = (w_{1},
     \dots, w_{d}) \in {\mathbb C}^{d}$, we set $Y(w) = w_{1} Y_{1} +
     \cdots + w_{d} Y_{d}$.  Then the following observations hold
     true:
     \begin{enumerate}
     \item
     For $w \in \Pi^{d}$, $Y(w) - A$ is
     invertible with
     \begin{equation}  \label{est}
     \|( Y(w) - A)^{-1} \| \le \frac{1}{ \operatorname{min}_{j}
     \operatorname{Re} w_{j} }.
     \end{equation}

     \item
   For $w \in \Pi^{d}$, the operator $(Y(w) - A)^{-1}$ is a
   bicontinuous bijection from $\cX$ onto $\cX_{1}$.

   \item For $w \in \Pi^{d}$, the operator $Y(w) - A \colon
   \cX_{1} \to \cX$ has an extension
   $$
   (Y(w) - A)_{|\cX} \colon \cX \to \cX_{-1}
   $$
   which is a bicontinuous bijection from $\cX$ onto $\cX_{-1}$.

    \item For any system node $\Sigma = \left[ \begin{matrix} A
    \& B \\ C \& D \end{matrix} \right]$ containing $A$ as its
    state operator/semi\-group generator, it holds that
    $$  \begin{bmatrix} ( (Y(w) - A)_{_{|\cX}})^{-1} B u \\ u
\end{bmatrix} \in \cD(\Sigma).
$$
\end{enumerate}
\end{proposition}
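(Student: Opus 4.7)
The plan is to dispatch the four claims in turn. Claim (1) carries the main analytic content; Claims (2)--(4) then follow from Claim (1) combined with the Gelfand-rigging machinery developed earlier in this section.

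For Claim (1), I start from the elementary accretivity bound
\[
\operatorname{Re}\langle Y(w)x,x\rangle \;=\; \sum_{k=1}^{d}\operatorname{Re}(w_k)\,\langle Y_k x,x\rangle \;\ge\; c\,\|x\|^{2}, \qquad c := \min_{j}\operatorname{Re} w_j,
\]
valid for all $x\in\cX$ since each $Y_k$ is positive and $\sum_k Y_k = I$. Combined with dissipativity of $A$ this gives $\operatorname{Re}\langle(Y(w)-A)x,x\rangle \ge c\|x\|^{2}$ for $x \in \cD(A)$, delivering injectivity, closed range of $Y(w)-A$, and the norm estimate \eqref{est}. Density of the range follows because any $y$ orthogonal to $\operatorname{Ran}(Y(w)-A)$ must lie in $\cD(A^{*})$ with $(Y(w)^{*}-A^{*})y = 0$; but $A^{*}$ is also maximal dissipative (cf.~\cite{Phillips59} and Example \ref{E:skewadjoint}), so the identical accretivity argument applied to $A^{*}$ forces $y = 0$.

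Claim (2) is immediate: applying the identity $(\alpha I - A)x = (\alpha I - Y(w))x + (Y(w)-A)x$ to $x = (Y(w)-A)^{-1}y$ and using boundedness of $Y(w)$ on $\cX$ together with Claim (1) yields $\|x\|_{1}\le C\|y\|_{\cX}$, so $(Y(w)-A)^{-1}:\cX\to\cX_{1}$ is bounded. The reverse estimate for $Y(w)-A:\cX_{1}\to\cX$ is immediate from the definition of the graph norm and boundedness of $Y(w)$, completing the proof that the map is a bicontinuous bijection.

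Claim (3) is the main subtlety. The extension $(Y(w)-A)_{|\cX}:\cX\to\cX_{-1}$ is well-defined and bounded because $A_{|\cX}:\cX\to\cX_{-1}$ is bounded by the Gelfand-rigging construction while $Y(w)$ is bounded on $\cX \hookrightarrow \cX_{-1}$. To invert it, I apply Claims (1) and (2) to the maximal dissipative operator $A^{*}$, obtaining a bounded bijection $(Y(w)^{*}-A^{*}):\cX_{1}^{\star}\to\cX$. Under the canonical identification $\cX_{-1}\cong(\cX_{1}^{\star})^{*}$ via the $\cX$-pairing \eqref{Xpairing} established earlier in this section, the operator $(Y(w)-A)_{|\cX}:\cX\to\cX_{-1}$ is precisely the Banach-space adjoint of $(Y(w)^{*}-A^{*}):\cX_{1}^{\star}\to\cX$ and is therefore itself a bicontinuous bijection.

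Finally, Claim (4) is a direct verification against the characterization \eqref{defdomain}. For $x := \bigl((Y(w)-A)_{|\cX}\bigr)^{-1}Bu \in \cX$, the defining relation $(Y(w)-A)_{|\cX}x = Bu$ rearranges in $\cX_{-1}$ to
\[
A_{|\cX}x + Bu \;=\; Y(w)x,
\]
and the right-hand side lies in $\cX$ since $Y(w)$ is bounded on $\cX$ and $x\in\cX$; hence $\sbm{x\\u}\in\cD(\Sigma)$. The principal difficulty is the extension/duality bookkeeping in Claim (3); once that identification is in hand, the remaining verifications are routine.
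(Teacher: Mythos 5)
Your proposal is correct and follows essentially the same route as the paper: the accretivity estimate $\operatorname{Re}\langle (Y(w)-A)x,x\rangle\ge(\min_j\operatorname{Re}w_j)\|x\|^2$ for part (1) (with surjectivity obtained from the dual estimate for $Y(w)^*-A^*$), the identity $A(Y(w)-A)^{-1}x=-x+Y(w)(Y(w)-A)^{-1}x$ for part (2), duality via the $\cX$-pairing identification $\cX_{-1}\cong(\cX_1^\star)^*$ for part (3), and the rearrangement $A_{|\cX}x+Bu=Y(w)x\in\cX$ against \eqref{defdomain} for part (4). The only cosmetic difference is that you establish bicontinuity in (2) by an explicit norm estimate where the paper invokes the open mapping theorem.
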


    \begin{proof}

    \textbf{(1)}  For $w = (w_{1}, \dots, w_{d}) \in \Pi^{d}$ and $x
    \in \cD(A)$, we have
    \begin{align}
        \| (Y(w)  - A) x \|  \cdot \|x\| & \ge | \langle (Y(w) - A) x, x
        \rangle | \notag  \\
        & \ge \operatorname{Re} \langle (Y(w) - A) x, x \rangle
        \notag \\
        & \ge \operatorname{Re} \langle Y(w) x, x \rangle \notag  \\
        & = \sum_{k=1}^{d} \langle ( \operatorname{Re} w_{k})
        Y_{k} x, x \rangle  \notag \\
        & \ge \operatorname{min}_{j} ( \operatorname{Re} w_{j} )
        \sum_{k=1}^{d} \langle Y_{k} x, x \rangle  \notag \\
        & = \operatorname{min}_{j} ( \operatorname{Re} w_{j} )
        \| x\|^{2}.
        \label{compute}
    \end{align}
Thus $Y(w) - A$ is bounded below and has a left inverse.  A similar
argument with $Y(w) - A$ replaced by $(Y(w) - A)^{*}$ shows that
$Y(w) - A$ also has a right inverse.  The computation  \eqref{compute}
gives the estimate \eqref{est}.

\smallskip

\textbf{(2)}
Next note that for $w \in \Pi^{d}$ and $x \in \cX$,
$$
  A (Y(w) - A)^{-1} x = - x + Y(w) (Y(w) - A)^{-1} x  \in \cX
$$
from which we conclude that $(Y(w) - A)^{-1}$ maps $\cX$ into
$\cX_{1}$.  Conversely if $x \in \cX_{1}$, then $ y = (Y(w) - A) x \in \cX$
and we recover $x$ as $x = (Y(w) - A)^{-1} y$.  We conclude that
$(Y(w) - A)^{-1}$ maps $\cX$ bijectively to $\cX_{1}$.  The fact that
$(Y(w) - A)^{-1}$ is bicontinuous is then a consequence of the open
mapping theorem.

\smallskip

\textbf{(3)}  We use the dual version of a result from part (2):  for
$w \in \Pi^{d}$,
$$
Y(w)^{*} - A^{*} \colon \cX_{1}^\star \to \cX
$$
is a bicontinuous bijection from $\cX_{1}^\star$ (with the
$(1,\star)$-norm) onto $\cX$.
    If we then take adjoints with respect
to the $\cX$-pairing, we get an operator
$$
(Y(w)^{*} - A^{*})^{*} : \cX \to \cX_{-1}
$$
which must also be a bicontinuous bijection, but now from $\cX$ to
$\cX_{-1}$.  Since the duality is with respect to the
$\cX$-pairing \eqref{Xpairing}, it is clear that this map provides
an extension of the operator $Y(w) - A \colon \cX_{1} \to \cX$:
$$
(Y(w) - A)_{|\cX} : = ( Y(w)^{*} - A^{*})^{*} \colon \cX \to
\cX_{-1}.
$$

\textbf{(4)} To show that $\left[ \begin{matrix} ( (Y(w) -
A)_{|\cX})^{-1} B u \\ u \end{matrix} \right]$ is in
$\cD(\Sigma)$, by the characterization of $\cD(\Sigma)$ in
Definition \ref{D:sysnode} we need only show that
\begin{equation}  \label{toshow}
  \begin{bmatrix} A_{|\cX} & B \end{bmatrix} \begin{bmatrix} ( (Y(w)
- A)_{|\cX})^{-1} B u \\ u
\end{bmatrix} \in \cX.
\end{equation}
 As $Bu \in \cX_{-1}$, $(Y(w) - A)^{-1}$ maps $\cX_{-1}$ to $\cX$ by
 part (3) and $Y(w)$ is a bounded operator on
$\cX$, we conclude that
$$
  Y(w) ( (Y(w) - A)_{|\cX})^{-1} B u \in \cX.
$$
We now rewrite the expression in \eqref{toshow} as
 \begin{align*}
   & \begin{bmatrix} A_{|\cX} & B \end{bmatrix} \begin{bmatrix} (
(Y(w) - A)_{|\cX})^{-1} B u \\ u
\end{bmatrix} \\
& = [ (A - Y(w))_{|\cX} + Y(w)] ( (Y(w) -
A)_{|\cX})^{-1} B u + Bu \\
& = Y(w) ( (Y(w) - A)_{|\cX})^{-1} B u
\end{align*}
to conclude that  \eqref{toshow} holds as desired.
\end{proof}

\begin{remark}  \label{R:sysnodes} {\em
    We note that as a consequence of property (4) in Proposition
    \ref{P:strucresol} it is possible to define the transfer function associated
    with the structured resolvent $(Y(w) - A)^{-1}$ via
    $$
    T_{\Sigma, \{Y_{k}\}}(w) = [ C \& D ] \ \begin{bmatrix}
   \left( (Y(w) - A)_{|\cX} \right)^{-1} Bu \\ u \end{bmatrix}.
   $$
   It is tempting to view this as the transfer function for a
   multidimensional linear system
   \begin{equation}   \label{CT-FM} \begin{array}{rcl}
   \sum_{k=1}^{d} Y_{k}  \frac{ \partial x}{\partial t_{k}}(t) & = & A x(t) + B u(t) \\
     y(t) & = & C x(t) + D u(t).  \end{array}
    \end{equation}
   where $t = (t_{1}, \dots, t_{d})$ (a continuous-time version of a
   multidimensional linear system of Fornasini--Marchesini type---see
   \cite{FM}).  However, in general it is not clear how to extend the
   operators $Y_{1}, \dots, Y_{d} \in \cL(\cX)$ to operators
   ${Y_{k}}_{|\cX_{-1}}$ in a sensible way.
   Special cases where this is possible are: (1) the case where $B$ and
   $C$ are bounded, i.e.,  $B \in \cL(\cU, \cX)$ and $C \in \cL(\cX, \cY)$:
   in this situation the system
  \eqref{CT-FM} makes sense with state space taken simply to be
  $\cX$, and (2) in case the operators $Y_{k}$ commute with $A$: in this
  case one can extend $Y_{k}$ to $\cX_{-1}$ via the formula
  $$
    {Y_{k}}_{|\cX_{-1}} = (\alpha I - A) Y_{k} (\alpha I - A)^{-1}
    \colon \cX_{-1} \to \cX_{-1}.
  $$
  We note that continuous-time counterparts of Fornasini--Marchesini
  models of a somewhat different form have been considered in the
  literature (see \cite{BM} and the references there).
}\end{remark}

  \section{The Herglotz--Agler class over the polydisk}  \label{S:HA-D}

  In this section we present our results for the Herglotz--Agler class
  over the polydisk ${\mathbb D}^{d}$.

  \begin{theorem}  \label{T:HA-D}
      Given a function $F \colon {\mathbb D}^{d} \to \cL(\cU)$, the
      following are equivalent.

      \begin{enumerate}
      \item $F$ is in the Herglotz--Agler class
      $\mathcal{HA}({\mathbb D}^{d}, \cL(\cU))$.

      \item $F$ has a ${\mathbb D}^{d}$-Herglotz--Agler
      decomposition, i.e., there exist $\cL(\cU)$-valued positive
      kernels $K_{1}, \dots, K_{d}$ on ${\mathbb D}^{d} $ such that
 \begin{equation}   \label{Dd-HerAgdecom}
     F(\omega)^{*} + F(\zeta) = \sum_{k=1}^{d} (1 - \overline{\omega}_{k}
     \zeta_{k}) K_{k}(\omega, \zeta).
  \end{equation}

  \item There exist a Hilbert space $\cX$, a $d$-fold
  spectral decomposition $(P_{1}, \dots, P_{d})$ of $I_{\cX}$ with
  associated operator pencil $P(\zeta) = \zeta_{1}P_{1} + \cdots +
  \zeta_{d} P_{d}$ (see Section \ref{S:decomid}),  and a bounded colligation matrix
  $$
  \bU = \begin{bmatrix} A & B \\ C & D \end{bmatrix} \colon
  \begin{bmatrix} \cX \\ \cU \end{bmatrix} \to \begin{bmatrix} \cX \\
      \cU \end{bmatrix}
   $$
  with block matrix entries $A,B,C,D$ satisfying the relations
  \begin{equation}   \label{id}
      A^{*}A = AA^{*} = I_{\cX}, \quad B = AC^{*},
      \quad D+D^{*} = CC^{*}\ (= B^{*}B)
  \end{equation}
  such that
  \begin{equation} \label{DdHerreal}
      F(\zeta) = D + C (I - P(\zeta) A)^{-1} P(\zeta) B.
  \end{equation}

\end{enumerate}
\end{theorem}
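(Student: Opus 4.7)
I would establish the equivalences via (3)$\Rightarrow$(2)$\Rightarrow$(1), (1)$\Rightarrow$(2), and (2)$\Rightarrow$(3).

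For (3)$\Rightarrow$(2), I would first absorb $P(\zeta)B$ into the resolvent using $B = AC^{*}$ and the identity $(I - P(\zeta)A)^{-1}P(\zeta)A = (I - P(\zeta)A)^{-1} - I$, giving $F(\zeta) = C(I - P(\zeta)A)^{-1}C^{*} + D - CC^{*}$ and similarly for $F(\omega)^{*}$.  Because $D + D^{*} = CC^{*}$ the constant terms cancel, leaving
\[ F(\omega)^{*} + F(\zeta) = C\bigl[(I - A^{*}P(\omega)^{*})^{-1} + (I - P(\zeta)A)^{-1} - I\bigr]C^{*}. \]
Applying the resolvent identity $(I-Y)^{-1} + (I-X)^{-1} - I = (I-Y)^{-1}(I - YX)(I-X)^{-1}$ with $Y = A^{*}P(\omega)^{*}$, $X = P(\zeta)A$, and simplifying $I - A^{*}P(\omega)^{*}P(\zeta)A = \sum_{k}(1 - \bar{\omega}_{k}\zeta_{k})A^{*}P_{k}A$ via $A^{*}A = I$, $\sum_{k}P_{k} = I$, and $P_{j}P_{k} = \delta_{jk}P_{k}$, one reads off \eqref{Dd-HerAgdecom} with $H_{k}(\zeta) := P_{k}A(I - P(\zeta)A)^{-1}C^{*}$ and $K_{k} = H_{k}^{*}H_{k}$.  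For (2)$\Rightarrow$(1), Kolmogorov-decompose $K_{k} = H_{k}^{*}H_{k}$ and substitute a commuting tuple $T$ of strict contractions via the Taylor-series functional calculus; commutativity of the $T_{j}$ gives $H_{k}(T)T_{k} = T_{k}H_{k}(T)$ in the tensor sense, so that $F(T) + F(T)^{*} = \sum_{k}H_{k}(T)^{*}(I - T_{k}^{*}T_{k})H_{k}(T) \ge 0$.

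For (1)$\Rightarrow$(2), I would reduce to Theorem \ref{T:Agler} via a Cayley transform.  Since $\operatorname{Re} F(\zeta) \ge 0$ implies $F(\zeta) + I$ is invertible at every $\zeta \in \mathbb{D}^{d}$ (and likewise at every strict-contraction test tuple $T$), the function $S := (F - I)(F + I)^{-1}$ is well defined, and the identity $\|(W+I)y\|^{2} - \|(W-I)y\|^{2} = 2\langle (W + W^{*})y, y\rangle$ gives $\|S(T)\| \le 1$, so $S \in \mathcal{SA}_{d}(\cU,\cU)$.  Theorem \ref{T:Agler} then yields $I - S(\omega)^{*}S(\zeta) = \sum_{k}(1 - \bar{\omega}_{k}\zeta_{k})K'_{k}(\omega,\zeta)$; pulling back through
\[ F(\omega)^{*} + F(\zeta) = 2(I - S(\omega)^{*})^{-1}\bigl[I - S(\omega)^{*}S(\zeta)\bigr](I - S(\zeta))^{-1} \]
produces \eqref{Dd-HerAgdecom} with $K_{k}(\omega,\zeta) := 2(I - S(\omega)^{*})^{-1}K'_{k}(\omega,\zeta)(I - S(\zeta))^{-1}$, which is again a positive kernel.

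The main work, and the expected main obstacle, lie in (2)$\Rightarrow$(3), the lurking-isotropic-subspace construction.  With $\cX = \bigoplus_{k}\cX_{k}$, $H = \operatorname{col}(H_{1},\dots,H_{d})$ coming from the Kolmogorov decompositions, and $P_{k}$ the orthogonal projection onto $\cX_{k}$, the identity \eqref{Dd-HerAgdecom} is equivalent to the sesquilinear relation
\[ \langle H(\zeta)u, H(\omega)v\rangle - \langle P(\zeta)H(\zeta)u, P(\omega)H(\omega)v\rangle = \langle F(\zeta)u, v\rangle + \langle u, F(\omega)v\rangle. \]
I would interpret this inside a Kre\u{\i}n space on $\cX \oplus \cU \oplus \cU$ whose two $\cU$-copies are paired in the skew-adjoint style of Example \ref{E:skewadjoint}, so that the two $F$-terms on the right are absorbed into the indefinite part of the inner product.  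The identity then reads: the densely defined map sending $(P(\zeta)H(\zeta)u, u, 0)$ to $(H(\zeta)u, u, F(\zeta)u)$ is a Kre\u{\i}n isometry between the closures of its source and target spans, equivalently its graph is an isotropic subspace in the doubled Kre\u{\i}n space.  Extending this graph to a Lagrangian subspace (the lurking-isotropic-subspace step) produces a bounded operator $\bU = \sbm{A & B \\ C & D}\colon \cX \oplus \cU \to \cX \oplus \cU$ whose Kre\u{\i}n-isometric/Lagrangian relations translate directly into the algebraic identities \eqref{id}, and the realization \eqref{DdHerreal} is obtained by solving $\bU\sbm{P(\zeta)H(\zeta)u \\ u} = \sbm{H(\zeta)u \\ F(\zeta)u}$ for $H(\zeta)u$ and $F(\zeta)u$ in terms of $u$.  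As the authors flag in the introduction, the hard part is to verify that the isotropic subspace so constructed is in fact a \emph{graph space} with respect to the non-fundamental coordinate system $(P(\zeta)H(\zeta)u, u)$ on $\cX \oplus \cU$, so that the Lagrangian extension yields an everywhere-defined operator $\bU$ rather than a multivalued linear relation.
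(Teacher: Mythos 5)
Your overall route coincides with the paper's: (1)$\Rightarrow$(2) via the Cayley transform $S=(F-I)(F+I)^{-1}$ and Theorem \ref{T:Agler}, with exactly the paper's pullback of kernels (your $2(I-S(\omega)^{*})^{-1}K_k'(I-S(\zeta))^{-1}$ equals the paper's $\tfrac12[F(\omega)^{*}+I]\widetilde K_k[F(\zeta)+I]$); (3)$\Rightarrow$(2) by direct algebra (your variant, absorbing $B=AC^{*}$ into the resolvent and invoking the identity $(I-Y)^{-1}+(I-X)^{-1}-I=(I-Y)^{-1}(I-YX)(I-X)^{-1}$, is correct and a bit slicker than the paper's expansion); (2)$\Rightarrow$(1) by functional calculus; and (2)$\Rightarrow$(3) by the lurking-isotropic-subspace method in the four-component Kre\u{\i}n space $\widetilde\cX\oplus\cU\oplus\widetilde\cX\oplus\cU$ with the signature that cross-pairs the two $\cU$-copies.

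The genuine gap is in (2)$\Rightarrow$(3): you correctly identify, but do not carry out, the step on which the whole implication rests, namely that the Lagrangian extension $\cG$ is the graph of a \emph{bounded, everywhere-defined} colligation satisfying \eqref{id}. This does not follow from Lagrangian-ness alone; in these non-fundamental coordinates a Lagrangian subspace is in general only a closed, possibly unbounded or multivalued, linear relation. The paper fills this in three moves: (i) if $\sbm{x\\ u\\ 0\\ 0}\in\cG$, isotropy against the generating vectors forces $u=H(\zeta)^{*}P_{\widetilde\cX}x$ for all $\zeta$, and then self-pairing gives $-\|x\|^{2}=0$, so $\cG\cap\sbm{\cX\\ \cU\\ \{0\}\\ \{0\}}=\{0\}$ and $\cG$ is a graph; (ii) setting $\zeta=0$ shows $\sbm{0\\ u}$ lies in the domain for every $u\in\cU$, so the domain splits as $\sbm{\cD_{1}\\ \cU}$; (iii) Lemma 3.4 of \cite{BS} is then invoked to conclude boundedness with full domain and to extract \eqref{id} (isotropy of the graph yields $A^{*}A=I$, $B=AC^{*}$, $D+D^{*}=B^{*}B$, while maximality is what produces $AA^{*}=I$). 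Without (i)--(iii) one cannot solve the system \eqref{colsys} to read off \eqref{DdHerreal}. A smaller point: your standalone (2)$\Rightarrow$(1) applies the Taylor-coefficient functional calculus to Kolmogorov factors $H_{k}$ of arbitrary positive kernels $K_{k}$, which need not be holomorphic; the paper avoids this by first passing (2)$\Rightarrow$(3)$\Rightarrow$(2) to obtain explicit holomorphic factors, and your $H_{k}(\zeta)=P_{k}A(I-P(\zeta)A)^{-1}C^{*}$ from (3)$\Rightarrow$(2) would serve the same purpose.
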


\begin{proof}

   (1)$\Rightarrow$(2):  Use that $F \in \mathcal{HA}({\mathbb
    D}^{d}, \cL(\cU))$ if and only if  $S(\zeta): = [F(\zeta) - I]
[ F(\zeta) + I]^{-1}$ is in the Schur--Agler class
    $\mathcal{SA}({\mathbb D}^{d}, \cL(\cU))$.  Then $S$ has a
    Schur--Agler decomposition
    $$
    I - S(\omega)^{*} S(\zeta) = \sum_{k=1}^{d} (1 - \overline{\omega}_{k}
    \zeta_{k}) \widetilde K_{k}(\omega, \zeta)
    $$
    for $\cL(\cU)$-valued positive kernels $\widetilde K_{1}, \dots,
    \widetilde K_{d}$ on ${\mathbb D}^{d}$.
    A routine computation gives
    $$
    I - S(\omega)^{*} S(\zeta) = 2 [ F(\omega)^{*} + I]^{-1}
    \left(F(\omega)^{*} + F(\zeta) \right) [ F(\zeta) + I]^{-1}.
    $$
    This leads us to the ${\mathbb D}^{d}$-Herglotz--Agler
    decomposition \eqref{Dd-HerAgdecom} with
    $$
    K_{k}(\omega, \zeta) = \frac{1}{2} [F(\omega)^{*} + I]
   \,  \widetilde K_{k}(\omega, \zeta) \,  [F(\zeta) + I].
    $$
Notice that Agler in \cite{Agler1990} proved the implications
(1)$\Rightarrow$(2) in Theorems \ref{T:Agler} and \ref{T:HA-D}
simultaneously, while we show here that they are, in fact,
equivalent.

    \smallskip
    (2)$\Rightarrow$(3):  Since each kernel $K_{k}$ is positive, each
    $K_{k}$ has a Kolmogorov decomposition, i.e., there is a function
    $H_{k} \colon {\mathbb D}^{d} \to \cL(\cU, \widetilde \cX_{k})$
    (for some auxiliary Hilbert space $\widetilde \cX_{k}$) so that
    we have the factorization
    $K_{k}(\omega, \zeta) = H_{k}(\omega)^{*} H_{k}(\zeta)$.  We let
    $H(\zeta) = \operatorname{Col}_{k=1, \dots, d} [ H_{k}(\zeta)]$
    be the associated block column matrix function defining a function on
    ${\mathbb D}^{d}$ with values in $\cL(\cU, \widetilde \cX)$,
    where we set $\widetilde \cX = \widetilde \cX_{1} \oplus \cdots
    \oplus \widetilde \cX_{d}$ (written as columns).  We set
$P(\zeta)
    =\left[  \begin{smallmatrix}  \zeta_{1} I_{\widetilde \cX_{1}} &
    & \\ & \ddots & \\ & & \zeta_{d} I_{\widetilde \cX_{d}}
\end{smallmatrix} \right]$.
    We now may rewrite the Agler decomposition
    \eqref{Dd-HerAgdecom} in the form
    \begin{equation}   \label{Agdecom'}
    F(\omega)^{*} + F(\zeta) = H(\omega)^{*} (I_{\widetilde \cX} -
    P(\omega)^{*} P(\zeta) ) H(\zeta).
    \end{equation}
    We consider the subspace
    $$
    \widetilde{\cG} = \overline{\operatorname{span}} \left\{ \begin{bmatrix}
    H(\zeta) \\ F(\zeta) \\ P(\zeta) H(\zeta) \\ I_{\cU}
\end{bmatrix} u \colon u \in \cU, \, \zeta \in {\mathbb D}^{d}
 \right\} \subset \begin{bmatrix} \widetilde \cX \\ \cU \\ \widetilde
\cX \\ \cU \end{bmatrix}
$$
where the ambient space $\widetilde \cX \oplus \cU \oplus
\widetilde \cX \oplus \cU$ is given the Kre\u{\i}n-space inner
product induced by the signature matrix $\widetilde{\cJ}$ given by
$$
\widetilde{\cJ} = \begin{bmatrix} -I_{\widetilde \cX} & 0 & 0 & 0 \\
      0 & 0 & 0 & I_{\cU} \\ 0 & 0 & I_{\widetilde \cX} & 0 \\ 0 &
      I_{\cU} & 0 & 0 \end{bmatrix}.
$$
Then one can check
\begin{align*}
  &  \left\langle \begin{bmatrix} -I_{\widetilde \cX} & 0 & 0 & 0 \\
0
    & 0 & 0 & I_{\cU} \\ 0 & 0 & I_{\widetilde \cX} & 0 \\ 0 &
    I_{\cU} & 0 & 0 \end{bmatrix}
    \begin{bmatrix} H(\zeta) \\ F(\zeta) \\ P(\zeta) H(\zeta) \\
    I_{\cU} \end{bmatrix} u, \,
    \begin{bmatrix} H(\omega) \\ F(\omega) \\  P(\omega) H(\omega) \\
        I_{\cU} \end{bmatrix} u' \right\rangle   \\
        & \quad =
    \left\langle \left[ -H(\omega)^{*} H(\zeta) + F(\omega)^{*} +
    H(\omega)^{*} P(\omega)^{*} P(\zeta) H(\zeta) +
    F(\zeta)\right] u,\, u' \right\rangle = 0
\end{align*}
where the last equality follows as a consequence of the Agler
decomposition \eqref{Agdecom'}.  We conclude that
$\widetilde{\cG}$ is a $\widetilde{\cJ}$-isotropic subspace.  We
then extend $\widetilde{\cG}$ to a $\cJ$-Lagrangian subspace
$\cG$, where the ambient space $\widetilde \cX \oplus \cU \oplus
\widetilde \cX \oplus \cU$ extended to a space of the form $\cX
\oplus \cU \oplus \cX \oplus \cU$ with $\cX \supset \widetilde
\cX$ and the Kre\u{\i}n-space Gramian matrix $\cJ$ of the same
block form as $\widetilde \cJ$ above.

We claim that
\begin{equation*}
    \cG \cap \left[ \begin{smallmatrix} \cX \\ \cU \\ \{0\} \\ \{0\}
    \end{smallmatrix} \right] = \{0\}.
 \end{equation*}
 Indeed, suppose that $\left[ \begin{smallmatrix} x \\ u \\ 0 \\ 0
 \end{smallmatrix} \right] \in \cG$.  As $\cG$ is isotropic, we then
 must have, for all $u' \in \cU$ and $\zeta \in {\mathbb D}^{d}$,
 $$
  0 = \left\langle \cJ \left[ \begin{smallmatrix} x \\ u \\ 0 \\ 0
  \end{smallmatrix} \right], \left[ \begin{smallmatrix} H(\zeta) \\
  F(\zeta) \\
  P(\zeta) H(\zeta) \\ I_{\cU} \end{smallmatrix} \right]
  u' \right\rangle = \langle -H(\zeta)^{*} P_{\widetilde \cX} x + u,
  u' \rangle_{\cU}.
  $$
  Hence $u = H(\zeta)^{*} P_{\widetilde \cX} x$ for all $\zeta \in
  {\mathbb D}^{d}$; in particular, $u = H(0)^{*} P_{\widetilde \cX}
  x$.  Thus our element of $\cG$ has the form $\left[
  \begin{smallmatrix} x \\ H(0)^{*} P_{\widetilde \cX} x \\ 0 \\ 0
      \end{smallmatrix} \right]$.  We must also have
  $$
  0 = \left\langle \cJ \left[ \begin{smallmatrix} x \\ H(0)^{*}
  P_{\widetilde \cX} x \\ 0 \\ 0 \end{smallmatrix} \right], \left[
  \begin{smallmatrix}  x \\ H(0)^{*} P_{\widetilde \cX}x \\ 0 \\ 0
      \end{smallmatrix} \right] \right\rangle = -\|x\|^{2}_{\cX}
  $$
    which enables us to conclude that $x=0$ and hence also $\left[
    \begin{smallmatrix} x \\ H(0)^{*}P_{\widetilde \cX}x \\ 0 \\ 0
    \end{smallmatrix} \right] = 0$, and the claim follows.

    We are now able to conclude that  $\cG$ is a graph space:
$$
\cG = \left\{ \begin{bmatrix} A \& B \\ C \& D \\ \begin{matrix}
I_{\cX} & 0 \\ 0 & I_{\cU} \end{matrix} \end{bmatrix}
\begin{bmatrix} x \\ u
\end{bmatrix} \colon \begin{bmatrix} x \\ u \end{bmatrix} \in \cD
\right\}
$$
for some closed linear operator $\left[ \begin{smallmatrix} A \& B \\
C \& D \end{smallmatrix} \right]$ with domain $\cD \subset \left[
\begin{smallmatrix} \cX \\ \cU \end{smallmatrix} \right]$.
    Furthermore, by construction the vector $\sbm{ P(\zeta) H(\zeta)
    u \\ u}$ is in $\cD$ for all $\zeta \in {\mathbb D}^{d}$ and $u
    \in \cU$; in particular, by setting $\zeta = 0 \in {\mathbb
    D}^{d}$ we see that $\sbm{ 0 \\ u } \in \cD$ for all $u \in \cU$
    and hence $\cD$ splits:  $\cD = \sbm{ \cD_{1} \\ \cU }$ for some
    linear manifold $\cD_{1}  \subset \cX$.  We are now in position
    to apply Lemma 3.4 from \cite{BS} to conclude that in fact $\left[
    \begin{smallmatrix} A \& B \\ C \& D \end{smallmatrix} \right] =
    \left[ \begin{smallmatrix} A & B \\ C & D \end{smallmatrix}
    \right]$ is bounded with domain equal to all of $\left[
    \begin{smallmatrix} \cX \\ \cU \end{smallmatrix} \right]$,
        and moreover the identities \eqref{id} hold.   We have
        now produced a colligation matrix $\bU = \left[
        \begin{smallmatrix} A & B \\ C & D \end{smallmatrix}
        \right]$ so that
$$
\cG = \begin{bmatrix} A & B \\ C & D \\ I_{\cX} & 0 \\ 0 & I_{\cU}
\end{bmatrix} \begin{bmatrix} \cX \\ \cU \end{bmatrix}.
$$
In particular, it follows that, for any $u \in \cU$, there is a
corresponding $\left[ \begin{smallmatrix} x' \\ u' \end{smallmatrix}
\right]$ in $\left[ \begin{smallmatrix} \cX \\ \cU \end{smallmatrix}
\right]$ so that
\begin{equation}   \label{id1}
 \begin{bmatrix} H(\zeta) \\ F(\zeta) \\ P(\zeta)
H(\zeta) \\ I_{\cU} \end{bmatrix}  u =
\begin{bmatrix} A & B \\ C & D \\ I_{\cX} & 0 \\ 0 & I _{\cU}
\end{bmatrix}   \begin{bmatrix} x' \\ u'
\end{bmatrix}.
\end{equation}
From the bottom two components of \eqref{id1} we read off
$$
 x' = P(\zeta) H(\zeta) u, \quad u' = u.
$$
Then the top two components of \eqref{id1} give
$$
  \begin{bmatrix} A & B \\ C & D \end{bmatrix} \begin{bmatrix}
  P(\zeta) H(\zeta) u \\ u \end{bmatrix} = \begin{bmatrix}
  H(\zeta) u \\ F(\zeta) u \end{bmatrix} \text{ for all } u \in \cU
$$
which we rewrite as a linear system of operator equations
\begin{equation}   \label{colsys}
    \left\{ \begin{array}{rcl}
    A P(\zeta) H(\zeta) + B & = & H(\zeta) \\
    C P(\zeta) H(\zeta) + D & = & F(\zeta). \end{array}  \right.
\end{equation}
Solving the first equation in \eqref{colsys} for $H(\zeta)$ gives
$$
   H(\zeta) = (I_{\cX} - A P(\zeta) )^{-1} B.
$$
(We note that the inverse on the right hand side exists since $A$ is
unitary and $\| P(\omega) \| < 1$ for $\omega \in {\mathbb D}^{d}$.)
Plugging this last expression into the second of equations
\eqref{colsys} then yields
\begin{align*}
    F(\zeta) & = D + C P(\zeta) (I - A P(\zeta))^{-1} B \\
    & = D + C (I - P(\zeta) A)^{-1} P(\zeta) B
 \end{align*}
 and condition (3) in the statement of Theorem \ref{T:HA-D} follows.

 \smallskip
 (3)$\Rightarrow$(2):  We assume that $F$ has the representation
 \eqref{DdHerreal} where the coefficient matrices $A, B, C, D$
 satisfy the relations \eqref{id}.  Then we compute
 \begin{align*}
   &  F(\zeta)^{*} + F(\omega)  =
     D^{*} + B^{*}(I - P(\zeta)^{*}A^{*})^{-1} P(\zeta)^{*} C^{*} + D
     + C P(\omega) (I - A P(\omega))^{-1} B   \\
     & \quad  = [D + D^{*}] + B^{*} (I - P(\zeta)^{*} A^{*})^{-1}
     P(\zeta)^{*} A^{*} B + B^{* } A P(\omega) (I - A P(\omega))^{-1}
     B \\
     & \quad =B^{*} (I - P(\zeta)^{*} A^{*})^{-1} X (I - A
     P(\omega))^{-1} B
 \end{align*}
 where we have set $X$ equal to
 \begin{align*}
     X &  = (I - P(\zeta)^{*} A^{*}) (I - A P(\omega)) + P(\zeta)^{*}
     A^{*} (I - A P(\omega)) + (I - P(\zeta)^{*} A^{*}) A P(\omega)
\\
     & = I - P(\zeta)^{*} A^{*} - A P(\omega) + P(\zeta)^{*}
     P(\omega)  \\
     & \quad + P(\zeta)^{*} A^{*} - P(\zeta)^{*} P(\omega) + A
     P(\omega) - P(\zeta)^{*} P(\omega) \\
     & = I - P(\zeta)^{*} P(\omega).
  \end{align*}
  We conclude that the Agler decomposition \eqref{Dd-HerAgdecom}
  holds with
  $$
   K_{k}(\zeta, \omega) = B^{*} (I - P(\zeta)^{*} A^{*})^{-1} P_{k}
   (I - A P(\omega))^{-1} B,
   $$
   where $P_k=P_{\widetilde{\cX}_k}$ is the orthogonal projection
   of $\widetilde{\cX}$ onto $\widetilde{\cX}_k$,
   and condition (2) in Theorem \ref{T:HA-D} follows.

   \smallskip

   (2)$\Rightarrow$(1):  Given an Agler decomposition
   \eqref{Dd-HerAgdecom}, we may rewrite it in the form
   \eqref{Agdecom'}.
   From the proof of
   (2)$\Rightarrow$(3)$\Rightarrow$(2) we see that we may assume that
   $H(\zeta)$ is holomorphic in $\zeta \in {\mathbb D}^{d}$.  If $T =
   (T_{1}, \dots, T_{d})$ is a commutative $d$-tuple of strict
   contractions on $\cK$, it is straightforward to verify that the
   formula \eqref{Agdecom'} leads to
   $$
   F(T)^{*} + F(T) = H(T)^{*} \left( I_{\cX \otimes \cK} - P(T)^{*}
   P(T) \right) H(T).
   $$
   From the diagonal form of $P(\zeta) = \sum_{k=1}^{d} \zeta_{k}
   P_{k}$, we see that $P(T) = \sum_{k=1}^{d} P_{k} \otimes T_{k}$
has
   $\| P(T) \| < 1$.  Hence $F(T)^{*} + F(T) \ge 0$ and we conclude
   that $F \in \mathcal{HA}({\mathbb D}^{d}, \cL(\cU))$.
  \end{proof}

  \begin{remark}  \label{R:HAdisk}  {\em  Given a representation for
      $F \in \mathcal{HA}({\mathbb D}^{d}, \cL(\cU))$ as in
      \eqref{DdHerreal} and \eqref{id}, let us separate out the selfadjoint and
skew-adjoint parts of $F(0) = D$ to rewrite the formula
      \eqref{DdHerreal} as
    $$
    F(\zeta) = {\rm Re}\,F(0) + C (I - P(\zeta) A)^{-1}
    P(\zeta) B + R
    $$
    where ${\rm Re}\,F(0)=\frac{1}{2}(D+D^*)$ and we set  $R = \frac{1}{2} (D - D^{*}) = - R^{*}$.
    From the relations \eqref{id} we see that
       \begin{align}
    F(\zeta) - R & =
    \frac{1}{2} B^{*}\left(I + 2A (I - P(\zeta) A)^{-1}
    P(\zeta) \right) B   \notag \\
& = \frac{1}{2} B^{*} \left( I + 2A P(\zeta) (I - A P(\zeta))^{-1} \right) B   \notag \\
    & = V^{*}(I - A
    P(\zeta))^{-1} (I + A P(\zeta)) V,   \label{DdHerreal2'}
    \end{align}
 where  $V:= \frac{1}{\sqrt{2}} B$ is such that
 $V^*V=  \operatorname{Re} F(0)$.

 We note that Agler \cite{Agler1990} obtained the representation
 \eqref{DdHerreal2'} for a function in the Herglotz--Agler class
 $\mathcal{HA}({\mathbb D}^{d}, \cL(\cU))$ starting with the realization
 $S(\zeta) = \bD + \bC(I - P(\zeta) \bA)^{-1} P(\zeta) \bB
 $ (with $\bU = \left[ \begin{smallmatrix} \bA & \bB \\ \bC & \bD
\end{smallmatrix} \right]$ unitary as in part (3) of Theorem
\ref{T:Agler})
for the associated function
\begin{equation}\label{E:F-S}
  S(\zeta) = (F(\zeta) - I) (F(\zeta) + I)^{-1}
\end{equation}
in the Schur--Agler class $\mathcal{SA}({\mathbb D}^{d},
\cL(\cU))$ as follows.  The fact that $S$ arises as the Cayley
transform of a function in the Herglotz--Agler class implies that
$I - S(\zeta) = 2 (F(\zeta) + I)^{-1}$ is injective:  in
particular, $\bD = S(0)$ has the property that $I-\bD$ is
invertible; moreover, the fact that $\bU$ is unitary implies that
the $\bU_{0}:= \bA + \bB(I - \bD)^{-1} \bC$ is also unitary. This
follows from the two identities, \begin{align*}
\begin{bmatrix}
\bA & \bB  \\ \bC & \bD
\end{bmatrix}
\begin{bmatrix}
I \\ (I-\bD)^{-1} \bC
\end{bmatrix} &=\begin{bmatrix}
\bU_{0}\\
(I-\bD)^{-1}\bC
\end{bmatrix},\\ \begin{bmatrix}
\bA^* & \bC^*\\
\bB^* & \bD^*
\end{bmatrix}\begin{bmatrix}
I\\
(I-\bD^*)^{-1} \bB^*
\end{bmatrix} &=\begin{bmatrix}
\bU_{0}^*\\
(I-\bD^*)^{-1}\bB^*
\end{bmatrix},
\end{align*}
which, when combined with that fact that $\sbm{ \bA & \bB \\ \bC &
\bD}$ is unitary, imply that both $\bU_0$ and $\bU_0^*$ are isometries. In
terms of our notation, the result of Agler \cite[Proof of theorem
1.8]{Agler1990} is that then $F$ has the representation
\eqref{DdHerreal2'} with
\begin{equation}   \label{Agler-rep}
 A = \bU_{0}, \quad V= \frac{1}{\sqrt{2}} \bB.
\end{equation}

 The formula \eqref{DdHerreal2'} can be further adjusted as follows:
 \begin{align}
     F(\zeta) - R & = V^{*}(I - A
     P(\zeta))^{-1} (I + AP(\zeta))V  \notag \\
          & = V^{*}(U - P(\zeta))^{-1} (U + P(\zeta)) V
     \label{DdHerreal2}
  \end{align}
where we set  $U = A^{*}$. Here we still have $V^{*}V =
\operatorname{Re} F(0)$ and $U = A^{*}$ is unitary.  We shall use
this representation for a ${\mathbb D}^{d}$-Herglotz--Agler
function in Section \ref{S:Bes}. For an alternate direct
derivation of the realization \eqref{DdHerreal2} for a ${\mathbb
D}^{d}$-Herglotz--Agler function, see \cite{BK-V}, where the
result is given for the rational matrix-valued case with the
additional constraint that $F$ have zero real part on the unit
circle; in this case one can arrange that the state space $\cX$ is
finite-dimensional. }
\end{remark}

  \section{The Schur--Agler class over the right polyhalfplane}
  \label{S:SA-Pi}

  In this section we present our realization results for the
  Schur--Agler class over the right polyhalfplane $\Pi^{d}$.

  \begin{theorem}  \label{T:SA-Pi}
      Given a function $s \colon \Pi^{d} \to \cL(\cU, \cY)$, the
      following are equivalent.

      \begin{enumerate}
      \item $s$ is in the right polyhalfplane Schur--Agler class
      $\mathcal{SA}(\Pi^{d}, \cL(\cU, \cY))$.

      \item $s$ has a $\Pi^{d}$-Schur--Agler decomposition, i.e.,
      there exist positive kernels $K_{1}, \dots, K_{d}$ on
      $\Pi^{d} $ such that
     \begin{equation}  \label{Pi-SchurAgdecom}
         I - s(z)^{*} s(w) = \sum_{k=1}^{d} (\overline{z}_{k} + w_{k})
         K_{k}(z,w).
 \end{equation}
 for all $z,w \in \Pi^{d}$.

 \item There exists a state space $\cX$ and a $d$-fold positive
 decomposition of $I_{\cX}$ $(Y_{1}, \dots, Y_{d})$ (see Section
 \ref{S:decomid}) together with a $\Pi$-scattering-conservative system node (see Example
 \ref{E:scatconserv})
 $$
 \bU = \begin{bmatrix} A \& B \\ C \& D \end{bmatrix} \colon \cD
 \subset \begin{bmatrix} \cX \\ \cU \end{bmatrix} \to \begin{bmatrix}
 \cX \\ \cY \end{bmatrix}
 $$
 so that
 \begin{equation}   \label{real'}
   s(w) = C \& D  \begin{bmatrix} \left((Y(w) -A)_{|\cX}\right)^{-1}
B \\ I_{\cU}
   \end{bmatrix},
 \end{equation}
 where we have set $Y(w) = w_{1}Y_{1} + \cdots + w_{d}Y_{d}$.
  \end{enumerate}
 \end{theorem}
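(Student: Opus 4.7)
The plan mirrors the proof of Theorem \ref{T:HA-D}: establish $(1)\Leftrightarrow(2)$ by a Cayley-transform reduction to Agler's Theorem \ref{T:Agler} on the polydisk, and establish $(2)\Leftrightarrow(3)$ by the lurking-isotropic-subspace method adapted to the $\Pi$-scattering-conservative Kre\u{\i}n geometry of Example \ref{E:scatconserv}. The direction $(3)\Rightarrow(2)$ is essentially a routine Lagrangian-orthogonality calculation, so the main work lies in the implications leading into~(3).

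For $(1)\Leftrightarrow(2)$, set $S(\zeta):=s\bigl(\tfrac{1+\zeta}{1-\zeta}\bigr)$. The holomorphic functional calculus for commuting tuples together with the Cayley correspondence $T\leftrightarrow\tfrac{1+T}{1-T}$ (strict contractions $\leftrightarrow$ strictly accretive tuples) gives $S(T)=s\bigl(\tfrac{1+T}{1-T}\bigr)$, hence $s\in\mathcal{SA}(\Pi^{d},\cL(\cU,\cY))\Leftrightarrow S\in\mathcal{SA}(\mathbb{D}^{d},\cL(\cU,\cY))$. The elementary scalar identity
\begin{equation*}
1-\overline{\omega}_{k}\zeta_{k}=\frac{2(\overline{z}_{k}+w_{k})}{(\overline{z}_{k}+1)(w_{k}+1)},\qquad \omega_{k}=\tfrac{z_{k}-1}{z_{k}+1},\ \zeta_{k}=\tfrac{w_{k}-1}{w_{k}+1},
\end{equation*}
converts the polydisk Agler decomposition of $S$ supplied by Theorem \ref{T:Agler} into a $\Pi^{d}$-Schur--Agler decomposition of $s$, and conversely, with the positive scalar factor $\tfrac{2}{(\overline{z}_{k}+1)(w_{k}+1)}$ preserving positivity of the kernels.

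For $(2)\Rightarrow(3)$, take Kolmogorov factorizations $K_{k}(z,w)=H_{k}(z)^{*}H_{k}(w)$ with $H_{k}\colon\Pi^{d}\to\cL(\cU,\widetilde{\cX}_{k})$ holomorphic and minimal. Aggregate: $\widetilde{\cX}=\bigoplus_{k}\widetilde{\cX}_{k}$, $H(w)=\operatorname{Col}_{k}[H_{k}(w)]$, $Z(w)=\sum_{k}w_{k}P_{k}$ with $P_{k}$ the orthogonal projection onto $\widetilde{\cX}_{k}$. The decomposition takes the aggregated form $I-s(z)^{*}s(w)=H(z)^{*}\bigl(Z(z)^{*}+Z(w)\bigr)H(w)$. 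In the Kre\u{\i}n space $\widetilde{\cX}\oplus\cY\oplus\widetilde{\cX}\oplus\cU$ with signature operator $\widetilde{\cJ}$ as in Example \ref{E:scatconserv}, the subspace
\begin{equation*}
\widetilde{\cG}=\overline{\operatorname{span}}\left\{\sbm{Z(w)H(w)u \\ s(w)u \\ H(w)u \\ u}\colon u\in\cU,\ w\in\Pi^{d}\right\}
\end{equation*}
is $\widetilde{\cJ}$-isotropic: the four-term Kre\u{\i}n inner product collapses to the aggregated decomposition after swapping $z\leftrightarrow w$. Extend $\widetilde{\cG}$ to a $\cJ$-Lagrangian subspace $\cG$ in an enlarged Kre\u{\i}n space $\cX\oplus\cY\oplus\cX\oplus\cU$ with $\widetilde{\cX}\subset\cX$, and extend $(P_{1},\dots,P_{d})$ by any positive decomposition of $I_{\cX\ominus\widetilde{\cX}}$ (e.g.~$\tfrac{1}{d}I$ in each coordinate) to obtain a positive decomposition $(Y_{1},\dots,Y_{d})$ of $I_{\cX}$ whose pencil $Y(w)$ extends $Z(w)$. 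By Example \ref{E:scatconserv}, $\cG$ is the graph of a $\Pi$-scattering-conservative system node $\Sigma=\sbm{A\&B \\ C\&D}$. Reading off $\Sigma\sbm{H(w)u \\ u}=\sbm{Z(w)H(w)u \\ s(w)u}$ from membership in $\cG$, and using Proposition \ref{P:strucresol}(4) to identify $H(w)u=((Y(w)-A)_{|\cX})^{-1}Bu$, delivers the realization \eqref{real'}. The reverse $(3)\Rightarrow(2)$ proceeds symmetrically: applying the Lagrangian identity to generating vectors $\sbm{Y(w)x(w)u \\ s(w)u \\ x(w)u \\ u}$ with $x(w)=((Y(w)-A)_{|\cX})^{-1}B$ yields $I-s(w)^{*}s(z)=x(w)^{*}(Y(w)^{*}+Y(z))x(z)$, and each kernel $K_{k}(z,w)=x(z)^{*}Y_{k}x(w)=(Y_{k}^{1/2}x(z))^{*}(Y_{k}^{1/2}x(w))$ is automatically positive.

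The main obstacle is ensuring the Lagrangian extension is a genuine graph, i.e., $\cG\cap\sbm{\cX \\ \cY \\ \{0\} \\ \{0\}}=\{0\}$. Isotropy immediately forces the $\cY$-component of any such intersection vector to vanish, and minimality of the Kolmogorov factorization forces its $\widetilde{\cX}$-component to vanish, but the $\widetilde{\cX}^{\perp}$-component must be killed by a careful choice of the extension — one cannot short-circuit this by specializing to $\zeta=0$ as in the proof of Theorem \ref{T:HA-D}, since $0\notin\Pi^{d}$. That the state operator $A$ is automatically maximal dissipative is built into the $\Pi$-scattering-conservative framework (as $A$ generates a contraction semigroup), so once the graph property is secured, Proposition \ref{P:strucresol} immediately justifies the structured resolvent in \eqref{real'}.
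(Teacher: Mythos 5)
Your architecture matches the paper's (Cayley transform for (1)$\Leftrightarrow$(2), lurking isotropic subspace for (2)$\Rightarrow$(3), Lagrangian identity for (3)$\Rightarrow$(2)), but there is a genuine gap in (2)$\Rightarrow$(3), at precisely the point you flag as the main obstacle. You build $\widetilde{\cG}$ inside $\widetilde{\cX}\oplus\cY\oplus\widetilde{\cX}\oplus\cU$ with $\widetilde{\cX}=\bigoplus_k\widetilde{\cX}_k$ and keep the \emph{spectral} pencil $Z(w)=\sum_k w_kP_k$. The isotropy argument then only shows that the first component $x'$ of a vector in $\widetilde{\cG}\cap\sbm{\widetilde{\cX} \\ \cY \\ \{0\} \\ \{0\}}$ is orthogonal to $\widetilde{\cX}_0:=\overline{\operatorname{span}}\{\operatorname{Ran}H(w)\colon w\in\Pi^d\}$. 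Your appeal to ``minimality of the Kolmogorov factorization'' does not close this: minimality of each $H_k$ makes the ranges of $H_k(w)$ span $\widetilde{\cX}_k$, but the ranges of the \emph{column} $H(w)$ need not span $\bigoplus_k\widetilde{\cX}_k$, so $\widetilde{\cX}_0$ can be a proper subspace of $\widetilde{\cX}$ and $x'$ need not vanish; moreover the first components $Z(w)H(w)u$ of your generators genuinely leave $\widetilde{\cX}_0$, so you cannot simply restrict the ambient space. The paper's fix --- and the structural reason condition (3) is stated with a \emph{positive} rather than a spectral decomposition --- is to compress to $\widetilde{\cX}_0$ from the start, replacing $P_k$ by $\widetilde Y_k=P_{\widetilde{\cX}_0}P_k|_{\widetilde{\cX}_0}$ and using $\widetilde Y(w)H(w)$ as the first component; then the orthogonality argument does force $x'=0$. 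Deferring the problem to ``a careful choice of the extension'' is misplaced: Proposition 2.5 of \cite{BS} extends an isotropic \emph{graph} to a Lagrangian graph, so the burden is on $\widetilde{\cG}$ itself.

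A second omission: you never verify that $\widetilde{\cG}$ is the graph of a \emph{densely defined} operator, which is needed for $\bU$ to be a system node. The paper obtains this from the decomposition at $z=w=t\mathbf{e}$, which gives $\|H(t\mathbf{e})u\|^2=\tfrac{1}{2t}\left(\|u\|^2-\|s(t\mathbf{e})u\|^2\right)\to0$ as $t\to+\infty$, whence the vectors $\sbm{H(w) \\ I_{\cU}}u$ span a dense subspace of $\widetilde{\cX}_0\oplus\cU$. This step has no analogue in the polydisk argument (where one sets $\zeta=0$) and cannot be skipped. The remaining directions of your proposal are sound: (1)$\Leftrightarrow$(2) via the Cayley change of variable and (3)$\Rightarrow$(2) via the Lagrangian pairing agree with the paper, the latter yielding exactly the kernels $K_k(z,w)=H(z)^*Y_kH(w)$.
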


 \begin{proof}
     (1)$\Rightarrow$(2):
     The proof uses the component-wise multivariable Cayley transform
     from $\Pi^{d}$ to ${\mathbb D}^{d}$; for this purpose it is
     convenient to use the condensed notation \eqref{Cayley1}
     for the point $\frac{1 + \zeta}{1- \zeta} = \left( \frac{1 +
     \zeta_{1}}{1 - \zeta_{1}}, \dots, \frac{1 + \zeta_{d}}{1 -
     \zeta_{d}} \right)$ in the polydisk ${\mathbb D}^{d}$ associated
     with the point $\zeta = (\zeta_{1}, \dots, \zeta_{d}) \in
     \Pi^{d}$.

     For $s \in \mathcal{SA}(\Pi^{d}, \cL(\cU,\cY))$ in the
     Schur--Agler class over $\Pi^{d}$ we associate the function $S \in \mathcal{SA}({\mathbb D}^{d},
     \cL(\cU, \cY))$ in the Schur--Agler class over ${\mathbb D}^{d}$
     via
     $$
     S(\zeta) = s\left( \frac{1+\zeta}{1 - \zeta} \right)
     $$
     Then by Theorem \ref{T:Agler} $S$ has a ${\mathbb D}^{d}$-Schur--Agler decomposition
     $$
I_{\cU} - S(\omega)^{*} S(\zeta) = \sum_{k=1}^{d} (1 - \overline{\omega}_{k} \zeta_{k})
\widetilde K_{k}(\omega, \zeta).
     $$
     Using the relation
     $$
     s(w) = S\left( \frac{w+1}{w-1} \right)
     $$
     (where we use the convention \eqref{Cayley2}),
     we next get that
  $$
  I_{\cU} - s(z)^{*} s(w) = \sum_{k=1}^{d} \left( 1 -
  \frac{\overline{z}_{k}-1}{\overline{z}_{k}+1} \cdot
  \frac{w_{k}-1}{w_{k}+1}\right) \widehat K_{k}(z,w)
  $$
  where we set
  $$ \widehat K_{k} (z,w) =
   \widetilde K_{k}\left(
  \frac{z-1}{z+1},
  \frac{w-1}{w +1} \right).
  $$

  This in turn leads to
  $$
  I_{\cU} - s(z)^{*} s(w) = \sum_{k=1}^{d} (\overline{z}_{k} + w_{k})
  K_{k}(z,w)
  $$
  with $K_{k}$ given by
  $$
  K_{k}(z,w) = 2 \frac{1}{\overline{z}_{k}+1} \widehat K_{k}(z,w)
  \frac{1}{w_{k}+1}
  $$
  and (2) follows.

  \smallskip
  (2)$\Rightarrow$(3):  We use the Kolmogorov decompositions
  $K_{k}(z,w) = H_{k}(z)^{*} H_{k}(w)$ (where $H_{k} \colon \Pi^{d}
  \to \cL(\cU, \widetilde \cX_{k})$) of the positive kernels
  $K_{k}(z,w)$ to rewrite the $\Pi^{d}$-Schur--Agler decomposition
  \eqref{Pi-SchurAgdecom} in the form
  \begin{equation}   \label{Pi-SchurAgdecom'}
      I - s(z)^{*} s(w) = H(z)^{*} (P(z)^{*} + P(w)) H(w)
  \end{equation}
  where we set $H(w) = \operatorname{Col}_{1 \le k \le d} H_{d}(w)$
  with $P(w) = \left[ \begin{smallmatrix} w_{1} I_{\widetilde
  \cX_{1}} & & \\ & \ddots & \\ & & w_{d} I_{\widetilde \cX_{d}}
\end{smallmatrix} \right]$. We introduce the subspace
\begin{equation}   \label{defcX0}
\widetilde \cX_{0} : = \overline{\operatorname{span}} \left\{
\operatorname{Ran} H(w) \colon w \in \Pi^{d}\right\} \subset
\bigoplus_{k=1}^{d} \widetilde \cX_{k}=\widetilde{\cX}
\end{equation}
and we introduce operators $\widetilde Y_{1}, \dots, \widetilde
Y_{d}$ on $\widetilde
\cX_{0}$ by
$$
  \widetilde Y_{k} = P_{\widetilde \cX_{0}} P_{k}|_{\widetilde
\cX_{0}}
$$
where $P_{\widetilde \cX_{0}}$ is the orthogonal projection of
$\widetilde \cX$ onto its subspace $\widetilde \cX_{0}$
\eqref{defcX0} and where $P_k=P_{\widetilde \cX_{k}}$ is the
orthogonal projection of $\widetilde \cX$ onto $\widetilde
\cX_{k}$. It is easily verified that $\widetilde Y_{1}, \dots,
\widetilde Y_{d}$ form a positive decomposition of the identity on
$\widetilde \cX_{0}$.

We next view \eqref{Pi-SchurAgdecom'}
as the statement that the subspace
$$
\widetilde \cG: = \overline{\operatorname{span}} \left\{
\begin{bmatrix} \widetilde Y(w) H(w) \\ s(w) \\ H(w) \\ I_{\cU}
\end{bmatrix} u \colon w \in \Pi^{d},
\, u \in \cU \right\}
\subset
\begin{bmatrix} \widetilde \cX_{0} \\ \cY \\ \widetilde \cX_{0} \\
\cU \end{bmatrix}
 =: \widetilde{\boldsymbol{\cK}}
$$
is an isotropic subspace of $\widetilde{\boldsymbol{\cK}}$, where
$\widetilde{\boldsymbol{\cK}}$ is considered as a Kre\u{\i}n space
with inner product induced by the indefinite Gramian matrix
\begin{equation}  \label{Pi-J}
\widetilde \cJ: = \begin{bmatrix}  0 & 0 & I_{{\widetilde \cX}_{0}} &
0 \\
0 & I_{\cY} & 0 & 0 \\ I_{{\widetilde \cX}_{0}} & 0 & 0 & 0 \\ 0 & 0
& 0 &
-I_{\cU} \end{bmatrix}.
\end{equation}

We next check that $\widetilde \cG$ can be expressed as a graph space
$$
  \widetilde \cG = \begin{bmatrix} \widetilde A \& \widetilde B \\
     \widetilde C \& \widetilde D \\
     \begin{matrix} I_{{\widetilde \cX}_{0}} & 0 \end{matrix} \\
     \begin{matrix} 0 & I_{\cU} \end{matrix} \end{bmatrix}
         \widetilde \cD_{0}
$$
associated with a closed operator $\widetilde \bU = \left[
\begin{smallmatrix}  \widetilde A \& \widetilde B \\
     \widetilde C \& \widetilde D  \end{smallmatrix} \right] $ with
dense
domain $\widetilde \cD_{0} \subset \widetilde \cX_{0} \oplus \cU$
mapping into $\widetilde \cX_{0} \oplus \cY$.

To this end we first
check the necessary condition that
$$
\widetilde \cG \cap \begin{bmatrix} \widetilde \cX \\ \cY \\ \{0\} \\
\{0\} \end{bmatrix} = \{0\}
$$
as follows. We suppose that $x' \oplus y' \oplus 0 \oplus 0 \in
\widetilde \cG$. As $\widetilde \cG$ is isotropic, it follows that
\begin{equation*}
 0 = \left\langle \widetilde \cJ \left[ \begin{smallmatrix} x' \\ y'
\\ 0 \\ 0
 \end{smallmatrix} \right], \left[ \begin{smallmatrix} x' \\ y' \\ 0
 \\ 0 \end{smallmatrix} \right] \right\rangle = \| y'\|^{2}
\end{equation*}
from which we see that $y' = 0$ and $x' \oplus 0 \oplus 0 \oplus 0
\in \widetilde \cG$.  As $\widetilde \cG$ is isotropic, we must
then also have
$$
 0 = \left\langle \widetilde \cJ \left[ \begin{smallmatrix}
\widetilde Y(w) H(w)
 \\ s(w) \\ H(w) \\ I_{\cU} \end{smallmatrix} \right] u,
 \left[ \begin{smallmatrix} x' \\ 0 \\ 0 \\ 0 \end{smallmatrix}
 \right] \right\rangle = \langle H(w) u, x' \rangle_{{\widetilde
X}_{0}}
 $$
 for all $u \in \cU$ and $w \in \Pi^{d}$.  We now use the
 condition \eqref{defcX0} to conclude that necessarily $x' = 0$ as
 well.

 We next observe that \eqref{Pi-SchurAgdecom'} for
 $z=w=t\mathbf{e}$, where $\mathbf{e}=(1,\ldots,1)$, becomes
 $$I-s(t\mathbf{e})^*s(t\mathbf{e})=2tH(t\mathbf{e})^*H(t\mathbf{e}),$$
so for any $u\in\mathcal{U}$ we have
$$\|H(t\mathbf{e})u\|^2=\frac{1}{2t}(\|u\|^2-\|s(t\mathbf{e})u\|^2)\to
0\ {\rm as}\ t\to\infty.$$ Therefore
\begin{multline*}
\overline{\rm span}_{u,w}\Big\{\begin{bmatrix}
H(w)\\
I_{\cU}
\end{bmatrix}u\Big\}\supset\overline{\rm span}_{u,w,t}\Big\{\begin{bmatrix}
H(w)-H(t\mathbf{e})\\
0
\end{bmatrix}u\Big\}
\supset\overline{\rm span}_{u,w}\Big\{\begin{bmatrix}
H(w)\\
0
\end{bmatrix}u\Big\}\\
=\begin{bmatrix}
\widetilde{\cX_0}\\
\{0\}
\end{bmatrix},
\end{multline*}
  and we now see that $
 \left\{ \left[\begin{smallmatrix} H(w) \\
I_{\cU} \end{smallmatrix} \right] u \right\}$ has dense span in
$\widetilde \cX_{0} \oplus \cU$.  We conclude that $\widetilde
\cG$ is indeed a graph space as claimed.

By Proposition 2.5 in \cite{BS}, we may embed $\widetilde \cG$ into a
$\cJ$-Lagrangian subspace $\cG$ of $\boldsymbol{\cK}: = \cX \oplus
\cY \oplus \cX \oplus \cU$ where we may arrange that $\cX$ is a
Hilbert space containing $\widetilde \cX$ and where we set
\begin{equation}\label{eq:J}
\cJ = \begin{bmatrix} 0 & 0 & I_{\cX} & 0 \\ 0 & I_{\cY} & 0 & 0 \\
I_{\cX} & 0 & 0 & 0 \\ 0 & 0 & 0 & - I_{\cU} \end{bmatrix}.
\end{equation}
Furthermore, it is argued there that one can arrange that this
(possibly) enlarged Lagrangian subspace $\cG$ is also a
graph space:
$$
\cG \cap \begin{bmatrix} \cX \\ \cY \\ \{0\} \\ \{0\} \end{bmatrix} =
\{0\}.
$$
Hence there is a closed operator
$$
\bU = \begin{bmatrix} A \& B \\ C \& D \end{bmatrix} \colon
\cD(\bU)
\subset \begin{bmatrix} \cX \\ \cU \end{bmatrix} \to \begin{bmatrix}
\cX \\ \cY \end{bmatrix}
$$
so that
\begin{equation}   \label{cG-form}
\cG = \begin{bmatrix} \bU \\ I \end{bmatrix} \cD(\bU) =
\begin{bmatrix} A \& B \\ C \& D \\ \begin{matrix} I_{\cX} & 0 \\ 0 &
    I_{\cU} \end{matrix} \end{bmatrix} \cD(\bU).
\end{equation}
As discussed in Example \ref{E:scatconserv}, $\bU$
is a $\Pi$-scattering-conservative system node.
It is shown in the proof of Proposition 4.9 from \cite{BS} that then the main
operator $A$ of $\bU$ (given by
\eqref{defA} with $\bU$ in place of $\Sigma$)
is maximal dissipative (as defined in Example \ref{E:skewadjoint} above).
We let $Y_{1}, \dots, Y_{d}$ be a positive decomposition of the
identity on $\cX$ which extends $\widetilde Y_{1}, \dots, \widetilde Y_{d}$;
e.g., one way to do this is
\begin{align*}
& Y_{1}|_{\cX \ominus \widetilde \cX_{0}} = I_{\cX \ominus \widetilde
 \cX_{0}}, \quad Y_{k}|_{\cX \ominus \widetilde \cX_{0}} = 0 \text{
 for } k = 2, \dots, d, \\
& Y_{k}|_{{\widetilde \cX}_{0}} = Y_{k} \text{ for } k=1, \dots, d
\end{align*}
and extend by linearity.  We then set $Y(w) = w_{1} Y_{1} + \cdots +
w_{d} Y_{d}$.

As $\cG$ contains $\widetilde \cG$, we conclude that
$$
\begin{bmatrix} Y(w) H(w) u \\ s(w) u \\ H(w) u \\ u \end{bmatrix}
    \in \begin{bmatrix} A \& B \\ C \& D \\ \begin{matrix} I_{\cU} &
    0 \\ 0 & I_{\cU} \end{matrix} \end{bmatrix} \cD(\bU)
$$
for each $w \in \Pi^{d}$ and $u \in \cU$.  Thus, for each such $w$
and $u$ there is a $\left[ \begin{smallmatrix} x_{w,u} \\ u_{w,u}'
\end{smallmatrix} \right] \in \cD(\bU)$ so that
\begin{equation}   \label{id1'}
    \begin{bmatrix} Y(w) H(w) u \\s(w) u \\ H(w) u \\ u \end{bmatrix}
    = \begin{bmatrix} A \& B \\ C \& D  \\
    \begin{matrix} I_{\cX} & 0 \\ 0 & I_{\cU} \end{matrix} \end{bmatrix}
\begin{bmatrix}
    x_{w,u} \\ u_{w,u}' \end{bmatrix}.
\end{equation}
From the bottom two rows of \eqref{id1'} we read off
$$
H(w) u = x_{w,u}, \quad u = u_{w,u}'.
$$
Plugging these back into the first two rows of \eqref{id1'} gives
\begin{equation} \label{colsys'}
 \begin{array}{rcl}
A_{|\cX} H(w) u + B u & = &  Y(w) H(w) u  \\
C \& D \begin{bmatrix} H(w) u \\ u \end{bmatrix} & = & s(w) u.
\end{array}
\end{equation}
Since $\bU$ is a $\Pi$-scattering-conservative system node, a
consequence of Proposition \ref{P:strucresol} is that $Y(w) - A$
is invertible for each $w \in \Pi^{d}$ with an extension $(Y(w) -
A)_{|\cX} \colon \cX \to \cX_{-1}$ having the property that $(
(Y(w) - A)_{|\cX})^{-1} \colon \cX_{-1} \to \cX$.   We may
therefore solve the first of equations \eqref{colsys'} for $H(w)
u$ to get
\begin{equation}  \label{Hsolve}
    H(w) u = \left( (Y(w) - A)_{|\cX}\right)^{-1} B u.
\end{equation}
and furthermore
$$
\begin{bmatrix} \left( (Y(w) - A)_{|\cX} \right)^{-1} B u \\ u
\end{bmatrix} \in \cD(C \& D )  \text{ for each } w \in \Pi^{d}.
$$
We can now substitute \eqref{Hsolve} into the second of equations
\eqref{colsys'} to arrive at the desired realization formula
\eqref{real'} for $s$, and (3) follows.

\smallskip

(3)$\Rightarrow$(2): Assume that $s$ has a realization as in
\eqref{real'}.  For $w \in \Pi^{d}$, set
$$
 H(w) =  \left( (Y(w) - A)_{|\cX} \right)^{-1}B.
$$
Observe that
\begin{align*}
A_{|\cX} H(w) u + Bu  & = A \& B \begin{bmatrix} \left( (Y(w) -
A)_{|\cX}\right)^{-1} B \\ I \end{bmatrix} u \\
& =  A_{|\cX} \left( (Y(w) - A)_{|\cX} \right)^{-1} B u + B u  \\
& = -B u + P(w) \left( (Y(w) - A)_{|\cX}\right)^{-1} B u + B u \\
& = Y(w) \left( (Y(w) - A)_{|\cX}\right)^{-1} B u \\
& = Y(w) H(w)u.
\end{align*}
 Combining this with \eqref{Pi-SchurAgdecom'} gives
 \begin{equation}   \label{use1}
   \bU \begin{bmatrix} H(w) u \\ u \end{bmatrix} =
   \begin{bmatrix} A \& B \\ C \& D \end{bmatrix} \begin{bmatrix}
       H(w) u \\ u \end{bmatrix} = \begin{bmatrix}  Y(w) H(w) u \\
       s(w) u \end{bmatrix}.
 \end{equation}
By Proposition 4.9 from \cite{BS}, the fact that $\bU$ is a
$\Pi$-scattering-conservative system node tells us that the graph
of $\bU$ is a Lagrangian subspace of $\cX \oplus \cY \oplus \cX
\oplus \cU$ in the Kre\u{\i}n-space inner product induced by $\cJ$
as in \eqref{eq:J}. In particular it holds that
\begin{align}
    0 & = \left\langle \cJ \begin{bmatrix} A \& B \\ C \& D \\
    \begin{matrix} I_{\cX} & 0 \\ 0 & I_{\cU} \end{matrix}
    \end{bmatrix} \begin{bmatrix} H(w) u \\ u \end{bmatrix},  \,
    \begin{bmatrix} A \& B \\ C \& D \\
    \begin{matrix} I_{\cX} & 0 \\ 0 & I_{\cU} \end{matrix}
    \end{bmatrix} \begin{bmatrix} H(z) u' \\ u' \end{bmatrix}
    \right\rangle   \notag \\
    & = \left\langle \cJ \begin{bmatrix}  Y(w) H(w) u \\ s(w) u \\
H(w) u
    \\ u \end{bmatrix}, \, \begin{bmatrix}  Y(z) H(z) u' \\ s(z) u'
    \\ H(z) u' \\ u' \end{bmatrix} \right\rangle \notag  \\
    & = \langle \left[ H(z)^{*} (Y(z)^{*} + Y(w)) H(w) +
(s(z)^{*}s(w) -
    I_{\cU})\right] u, u' \rangle.
    \label{use2}
  \end{align}
By the arbitrariness of $u, u' \in \cU$, we conclude that
\begin{equation}  \label{contractiveAgdecom}
H(z)^{*}\left(\sum_{k=1}^{d} (\overline{z}_{k} + w_{k})
Y_{k}\right) H(w) = I_{\cU } - s(z)^{*} s(w).
\end{equation}
Then \eqref{contractiveAgdecom} leads to the Agler decomposition
\eqref{Pi-SchurAgdecom} with
$$
K_{k}(z,w) = H(z)^{*} Y_k H(w)=[Y_k^{1/2}H(z)]^*[Y_k^{1/2}H(w)].
$$

 \smallskip

 (2)$\Rightarrow$(1):  We write the Agler decomposition
 \eqref{Pi-SchurAgdecom} in the form \eqref{Pi-SchurAgdecom'}.  Then,
 if $\mathbf{A} = (A_{1}, \dots, A_{d})$ is a commutative $d$-tuple
 of strictly accretive operators, the functional calculus gives
 $$
 I - s({\mathbf A})^{*} s({\mathbf A}) = H({\mathbf A})^{*}
(P({\mathbf
 A})^{*} +  P({\mathbf A})) H({\mathbf A}).
 $$ From the diagonal form $P(w) = w_{1} P_{1} + \cdots + w_{d}
 P_{d}$ of $P(w)$, we see that $P({\mathbf A})^{*} + P({\mathbf A})
 \ge 0$, and hence $I - s({\mathbf A})^{*} s({\mathbf A}) \ge 0$, or
 $\|s({\mathbf A}) \| \le 1$.  We conclude that $s \in
 \mathcal{SA}(\Pi^{d}, \cL(\cU, \cY))$ and (1) follows.
\end{proof}

  \section{The Herglotz--Agler class over the polyhalfplane}
  \label{S:HA-Pi}

 In this section we present our realization results for a restricted
 class of Herglotz--Agler functions over the right polyhalfplane
 $\Pi^{d}$, where a growth condition \eqref{growth} is imposed at
 infinity.

  \begin{theorem} \label{T:HA-Pi}
      Given a function $f \colon \Pi^{d} \to \cL(\cU)$, the following
      are equivalent:
      \begin{enumerate}
      \item $f \in \mathcal{HA}(\Pi^{d}, \cL(\cU))$ and also $f$
      satisfies the growth condition at $+\infty$:
      \begin{equation}   \label{growth}
          \lim_{t \to +\infty} t^{-1} f(t {\mathbf e}) u = 0
          \text{ for each } u \in \cU.
        \end{equation}
    where  ${\mathbf e} = (1, \dots, 1) \in \Pi^{d}$.

    \item $f$ has a $\Pi^{d}$-Herglotz--Agler decomposition, i.e.,
    there exist $\cL(\cU)$-valued positive kernels $K_{1}, \dots,
    K_{d}$ on $\Pi^{d}$ such that
 \begin{equation}   \label{PiHerAgdecom}
     f(z)^{*} + f(w) = \sum_{k=1}^{d} (\overline{z}_{k} + w_{k}) K_{k}(z,w)
 \end{equation}
 and in addition $f$ satisfies the growth condition \eqref{growth}.

 \item There exists a Hilbert state space $\cX$ and a positive
 decomposition of the identity $(Y_{1}, \dots, Y_{d})$ on $\cX$
 along with an $\Pi$-impedance-conservative system node (see Example
 \ref{E:impconserv})
 $$
  \bY = \begin{bmatrix} A \& B \\ C \& D \end{bmatrix} \colon
  \cD(\bY) \subset \begin{bmatrix} \cX \\ \cU \end{bmatrix} \to
  \begin{bmatrix} \cX \\ \cU \end{bmatrix}
 $$
 such that
 \begin{equation}   \label{real''}
     f(w) = C \& D \begin{bmatrix} \left( (Y(w) -
     A)_{|\cX}\right)^{-1} B \\ I_{\cU} \end{bmatrix}.
  \end{equation}
  \end{enumerate}
\end{theorem}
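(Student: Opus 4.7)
The plan is to adapt the lurking-isotropic-subspace construction from the proof of Theorem \ref{T:SA-Pi} to the impedance-conservative setting, replacing the signature $\cJ$ of Example \ref{E:scatconserv} by the signature $\cJ'$ of \eqref{cJ'} and invoking Corollary \ref{C:impconserv} to identify the resulting operator as a $\Pi$-impedance-conservative system node. The main ingredient requiring care is the growth hypothesis \eqref{growth}, which in the Herglotz--Agler setting plays the role automatically filled by the Schur bound $\|s(\cdot)\|\le 1$ in the Schur--Agler setting.

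For (1)$\Leftrightarrow$(2), I would push the function through the component-wise Cayley transform \eqref{Cayley1}, \eqref{Cayley2}. Setting $F(\zeta)=f\bigl(\tfrac{1+\zeta}{1-\zeta}\bigr)$, the functional calculus commutes with the substitution, so $f\in\mathcal{HA}(\Pi^d,\cL(\cU))$ if and only if $F\in\mathcal{HA}(\mathbb{D}^d,\cL(\cU))$. Applying Theorem \ref{T:HA-D} to $F$ and substituting $\omega_k = \tfrac{\bar z_k - 1}{\bar z_k + 1}$, $\zeta_k = \tfrac{w_k - 1}{w_k + 1}$ in the $\mathbb{D}^d$-Herglotz--Agler decomposition produces \eqref{PiHerAgdecom} with $K_k(z,w)=\tfrac{2}{\bar z_k + 1}\widetilde K_k\bigl(\tfrac{z-1}{z+1},\tfrac{w-1}{w+1}\bigr)\tfrac{1}{w_k + 1}$. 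The growth condition \eqref{growth} is carried unchanged across the equivalence.

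For (2)$\Rightarrow$(3), take Kolmogorov decompositions $K_k(z,w)=H_k(z)^* H_k(w)$, set $H(w)=\operatorname{Col}_k H_k(w)$, $\widetilde{\cX}=\bigoplus_k\widetilde{\cX}_k$, and $P(w)=\bigoplus_k w_k I_{\widetilde{\cX}_k}$; then \eqref{PiHerAgdecom} rewrites as $f(z)^* + f(w) = H(z)^*\bigl(P(z)^* + P(w)\bigr)H(w)$. Let $\widetilde{\cX}_0=\overline{\operatorname{span}}\{\operatorname{Ran} H(w):w\in\Pi^d\}$, $\widetilde Y_k = P_{\widetilde{\cX}_0}P_k|_{\widetilde{\cX}_0}$, $\widetilde Y(w) = \sum_k w_k\widetilde Y_k$, and consider
\[
\widetilde{\cG} = \overline{\operatorname{span}}\left\{\begin{bmatrix}\widetilde Y(w)H(w)\\ f(w)\\ H(w)\\ I_{\cU}\end{bmatrix}u:u\in\cU,\,w\in\Pi^d\right\}
\]
in $\widetilde{\boldsymbol{\cK}}=\widetilde{\cX}_0\oplus\cU\oplus\widetilde{\cX}_0\oplus\cU$, equipped with the Kre\u{\i}n-space structure $\widetilde{\cJ}'$ having the same block pattern as \eqref{cJ'}. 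Using $H(z)^*\widetilde Y_k H(w) = K_k(z,w)$ together with \eqref{PiHerAgdecom}, one checks that $\widetilde{\cG}$ is $\widetilde{\cJ}'$-isotropic. I would then invoke \cite[Proposition 2.5]{BS} to embed $\widetilde{\cG}$ as a $\cJ'$-Lagrangian subspace $\cG$ in an enlarged $\cX\oplus\cU\oplus\cX\oplus\cU$ (with $\cX\supset\widetilde{\cX}_0$), arranged to be the graph of a closed operator $\bY = \sbm{A\&B\\ C\&D}$, extend $\widetilde Y_k$ to a positive decomposition of $I_{\cX}$, and verify condition (2) of Example \ref{E:impconserv} by taking $x_u = H(w_0)u$ for any fixed $w_0\in\Pi^d$. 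Corollary \ref{C:impconserv} then identifies $\bY$ as a $\Pi$-impedance-conservative system node, and reading off \eqref{real''} from the inclusion $\widetilde{\cG}\subset\cG$ proceeds exactly as in the final paragraphs of the proof of Theorem \ref{T:SA-Pi}, using Proposition \ref{P:strucresol} to invert $Y(w) - A$.

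The main obstacle, and the unique location where the growth condition enters, is showing that $\widetilde{\cG}$ is a graph space over the last two coordinates---equivalently, that $\{\sbm{H(w)u\\u}:u\in\cU,\,w\in\Pi^d\}$ has dense span in $\widetilde{\cX}_0\oplus\cU$. Evaluating \eqref{PiHerAgdecom} at $z=w=t\mathbf{e}$ gives
\[
2t\|H(t\mathbf{e})u\|^2 = 2\operatorname{Re}\langle f(t\mathbf{e})u,u\rangle,
\]
so the growth hypothesis $t^{-1}f(t\mathbf{e})u\to 0$ yields $H(t\mathbf{e})u\to 0$ as $t\to+\infty$, which is precisely the input needed to run the density argument from the proof of Theorem \ref{T:SA-Pi} in the present signature. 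For (3)$\Rightarrow$(2) the $\cJ'$-Lagrangian property of the graph of $\bY$ expands, as in the Schur--Agler case, into $f(z)^* + f(w) = H(z)^*(Y(z)^* + Y(w))H(w)$ with $H(w) = ((Y(w) - A)_{|\cX})^{-1}B$, so \eqref{PiHerAgdecom} holds with $K_k(z,w) = [Y_k^{1/2}H(z)]^*[Y_k^{1/2}H(w)]$; the growth condition itself then follows from the realization \eqref{real''} evaluated at $w = t\mathbf{e}$ by using the model of Proposition \ref{P:impconserv} together with the spectral theorem for the skew-adjoint $T$ and dominated convergence.
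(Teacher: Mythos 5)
Your proposal is correct, and for the core implications (2)$\Rightarrow$(3) and (3)$\Rightarrow$(2) it follows the paper's own lurking-isotropic-subspace argument essentially step for step: the same subspace $\widetilde\cG$, the same impedance signature, the same embedding into a $\cJ$-Lagrangian graph space identified via Corollary \ref{C:impconserv}, and the same key identity $2t\|H(t\be)u\|^2=2\operatorname{Re}\langle f(t\be)u,u\rangle$ extracting $H(t\be)\to 0$ from \eqref{growth}. You diverge in three places, all legitimately. First, for (1)$\Leftrightarrow$(2) you Cayley-transform the \emph{variables} and invoke Theorem \ref{T:HA-D}, whereas the paper Cayley-transforms the \emph{values} (passing to $s=(f-I)(f+I)^{-1}$) and invokes Theorem \ref{T:SA-Pi} for (1)$\Rightarrow$(2), handling (2)$\Rightarrow$(1) separately by functional calculus; your route gives both directions at once, at the cost of relying on the standard (and in the paper tacit) fact that the operator Cayley transform is a bijection between commuting strict-contraction tuples and commuting strictly accretive tuples intertwining the two functional calculi. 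Second, you route the graph-space verification through density of $\left\{ \sbm{ H(w)u \\ u } \right\}$ in $\widetilde\cX_0\oplus\cU$. This is not literally \emph{equivalent} to the trivial-intersection condition \eqref{graph''}, but given isotropy it implies it: an element $x\oplus u\oplus 0\oplus 0$ of the intersection is $\widetilde\cJ$-orthogonal to every generator, which says precisely that $x\oplus(-u)$ is orthogonal to the spanning set. Your version thus delivers the trivial intersection and the dense domain of the node in one stroke, while the paper proves trivial intersection directly by using $H(t\be)\to 0$ to kill $u$ via a weak limit; both hinge on the same estimate. Third, for the growth condition in (3)$\Rightarrow$(2) the paper cites the single-variable result (Theorem 7.4 of \cite{Staffans-ND}) applied to $t\mapsto f(t\be)$ (note $Y(t\be)=tI$, so this is the ordinary transfer function of the node at the scalar point $t$), whereas you re-derive it from the model of Proposition \ref{P:impconserv}: the relevant spectral multiplier of the skew-adjoint $T$ is $(1-t)(1+i\lambda)/(t-i\lambda)$, which is bounded by $t$ uniformly in $\lambda$ for $t\ge 1$ and is $o(t)$ pointwise, so dominated convergence in the spectral integral gives $t^{-1}f(t\be)u\to 0$. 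This is a self-contained substitute for the citation. I see no gaps.
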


\begin{proof}
    We show

(1)$\Rightarrow$(2)$\Rightarrow$(3)$\Rightarrow$(2)$\Rightarrow$(1).

    \smallskip

    \textbf{(1)$\Rightarrow$(2):}  We note that $f$ is in the
Herglotz--Agler
    class $\mathcal{HA}(\Pi^{d}, \cL(\cU))$ if and only if $s = (f-I) (f+I)^{-1}$ is in the Schur--Agler class
    $\mathcal{SA}(\Pi^{d}, \cL(\cU))$.  Thus by Theorem \ref{T:SA-Pi}
    this $s$ has a $\Pi^{d}$-Schur--Agler decomposition
    $$
    I - s(z)^{*} s(w) = \sum_{k=1}^{d} (\overline{z}_{k} + w_{k})
    \widetilde K_{k}(z,w)
    $$
    for some $\cL(\cU)$-valued positive kernels $\widetilde K_{k}$ on $\Pi^{d}$.  A standard computation gives
    $$
     I - s(z)^{*} s(w) = 2 (f(z)^{*}+I)^{-1} [f(z)^{*} + f(w)] (f(w)+I)^{-1}
     $$
     from which we see that \eqref{PiHerAgdecom} holds with
     $$
     K_{k}(z,w) = \frac{1}{2} (f(z)^{*}+I) \widetilde K_{k}(z,w) (f(w)+I).
     $$

     \smallskip

     \textbf{(2)$\Rightarrow$(3):}  We use the Kolmogorov
decompositions
     $K_{k}(z,w) = H_{k}(z)^{*} H_{k}(w)$ of the positive kernels
     $K_{k}$ (with $H_{k} \colon \Pi^{d} \to \cL(\cU, \widetilde
     \cX_{k})$ say) to rewrite the Agler decomposition
     \eqref{PiHerAgdecom} in the form
     \begin{equation}   \label{PiHerAgdecom'}
     f(z)^{*} + f(w) = H(z)^{*} ( P(z)^{*} + P(w)) H(w)
   \end{equation}
   where we have set
   $$
    H(w) = \begin{bmatrix} H_{1}(w) \\ \vdots \\ H_{d}(w)
\end{bmatrix}, \quad P(w) = \begin{bmatrix} w_{1} I_{\widetilde
\cX_{1}} & & \\ & \ddots & \\ & & w_{d} I_{\widetilde \cX_{d}}
\end{bmatrix}.
$$
Just as in the proof of Theorem \ref{T:SA-Pi},
we introduce the subspace
\begin{equation}  \label{defcX0'}
    \widetilde \cX_{0}:= \overline{\operatorname{span}} \{
    \operatorname{Ran} H(w) \colon w \in \Pi^{d} \} \subset
    \bigoplus_{k=1}^{d} \widetilde \cX_{k}=:\widetilde{X}
\end{equation}
and introduce operators $\widetilde Y_{1}, \dots, \widetilde Y_{d}$
on $\widetilde \cX_{0}$ by
$$
\widetilde Y_{k} = P_{\widetilde \cX_{0}} P_{k}|_{\widetilde \cX_{0}}.
$$
We then view \eqref{PiHerAgdecom} as the statement that the subspace
$$
\widetilde \cG = \overline{\operatorname{span}} \left\{
\begin{bmatrix} \widetilde{Y}(w) H(w) \\ f(w) \\ H(w) \\I_{\cU} \end{bmatrix} u
    \colon u \in \cU, \, w \in \Pi^{d} \right\} \subset
    \begin{bmatrix} \widetilde \cX_{0} \\ \cU \\ \widetilde \cX_{0}
\\ \cU
    \end{bmatrix} =:\widetilde{\boldsymbol \cK}
$$
is an isotropic subspace of $\widetilde{\boldsymbol{\cK}}$ when
$\widetilde{\boldsymbol{\cK}}$ is given the Kre\u{\i}n-space inner
product induced by the Gramian matrix
\begin{equation}   \label{PiHer-tildeJ}
\widetilde \cJ = \begin{bmatrix} 0 & 0 & I_{\widetilde \cX_0}& 0 \\
0 & 0 & 0 & -I_{\cU} \\ I_{\widetilde \cX_0} & 0 & 0 & 0 \\ 0 &
-I_{\cU} & 0 & 0 \end{bmatrix}.
\end{equation}

We show next that $\widetilde \cG$ is a graph space, i.e.,
\begin{equation}  \label{graph''}
    \widetilde \cG \cap \begin{bmatrix} \widetilde \cX_0 \\ \cU \\
\{0\}
    \\ \{0\} \end{bmatrix} = \{0\}.
 \end{equation}
 Toward this end, suppose that $x \oplus u \oplus 0 \oplus 0\in\widetilde \cG$.
 Our goal is to show that then necessarily $x=0$
 and $u = 0$.  As $\widetilde \cG$ is $\widetilde \cJ$-isotropic, we
necessarily have, for all $u' \in \cU$,
 \begin{align}
     0 & = \left\langle \widetilde \cJ \begin{bmatrix} \widetilde
Y(w) H(w) u' \\ f(w) u' \\ H(w) u' \\ u' \end{bmatrix},
\begin{bmatrix} x \\  u \\ 0 \\ 0 \end{bmatrix} \right\rangle  \notag \\
     & = \left\langle \begin{bmatrix} H(w)u' \\  -u' \end{bmatrix},
\, \begin{bmatrix} x \\ u \end{bmatrix} \right\rangle   \notag \\
     & = \langle u', H(w)^{*} x - u \rangle
     \label{ortho}
\end{align}
from which we conclude that
\begin{equation}  \label{have}
    u = H(w)^{*} x \text{ for all } w \in \Pi^{d}.
 \end{equation}
 We note that, as a consequence of \eqref{PiHerAgdecom'},

     $$
  H(t \be)^{*} H(t \be) = \frac{ f(t \be)^{*} + f(t \be)}{ 2 t}.
  $$
  The growth assumption \eqref{growth} then implies that $H(t \be) \to
  0$ strongly as $t \to +\infty$.

To conclude the proof of \eqref{graph''}, we now need only
specialize \eqref{have} to the case $w = t \be$ and take a weak
limit as $t \to +\infty$ to show that $u=0$.  It then follows from
\eqref{ortho} that $x$ is orthogonal to
$\overline{\operatorname{span}} \{ H(w) u \colon w \in \Pi^{d}, \,
u \in \cU\}$. As a consequence of \eqref{defcX0'}, this in turn
forces $x=0$ and \eqref{graph''} follows.

We next embed $\widetilde \cG$ into a $\cJ$-Lagrangian subspace $\cG$
of $\cX \oplus \cU \oplus \cX \oplus \cU$, where $\cX$ is a Hilbert
space containing $\widetilde \cX$ as a subspace and the indefinite
Gramian matrix has the same form as $\widetilde \cJ$ in
\eqref{PiHer-tildeJ} above:
\begin{equation}   \label{PiHer-J}
    \cJ = \begin{bmatrix}  0 & 0 & I_{\cX} & 0 \\
    0 & 0 & 0 & - I_{\cU} \\ I_{\cX} & 0 & 0 & 0 \\ 0 & -I_{\cU} & 0
    & 0 \end{bmatrix}
\end{equation}
in such a way that $\cG$ is still a graph subspace.
That this is possible follows via a minor adjustment of Proposition
2.5 in \cite{BS} as indicated in the proof of Theorem 4.12 there.
We also note that condition (2) in Example \ref{E:impconserv} is
automatic since the subspace $\widetilde \cG$ satisfies this
condition by construction.
It then follows that $\cG$ has the form \eqref{cG-form} for a closed
operator
$$
\bU = \begin{bmatrix} A \& B \\ C \& D \end{bmatrix} \colon
\cD(\bU)
\subset \begin{bmatrix} \cX \\ \cU \end{bmatrix} \to \begin{bmatrix}
\cX \\ \cY \end{bmatrix}.
$$
That $\bU$ is a $\Pi$-impedance-conservative system node follows
from the fact that $\cG$ is $\cJ$-Lagrangian (with $\cJ$ given by
\eqref{PiHer-J}).  We also extend the positive decomposition of
the identity $\widetilde Y_{1}, \dots, \widetilde Y_{d}$ on
$\widetilde \cX_{0}$ to a positive decomposition of the identity
$Y_{1}, \dots, Y_{d}$ on $\cX$ just as in the proof of
(2)$\Rightarrow$(3) in Theorem \ref{T:SA-Pi} above. That we
recover $f(w)$ as the transfer-function for the system node $\bU$,
i.e., the formula \eqref{real''} holds, now follows exactly as in
the proof of  Theorem \ref{T:SA-Pi}.

\smallskip

\textbf{(3)$\Rightarrow$(2):}  We follow the proof of
(3)$\Rightarrow$(2) in
Theorem \ref{T:SA-Pi}. If we define $H(w) = A \& B \left[
\begin{smallmatrix} \left( Y(w) - A_{|\cX}\right)^{-1} B \\ I
\end{smallmatrix} \right]$ (well-defined by Proposition
\ref{P:strucresol}), we arrive at \eqref{use1} and
\eqref{use2}, but with  $\cJ$ given by \eqref{PiHer-J}
 rather than by \eqref{Pi-J}, leading to the adjusted final conclusion
 $$
     0 = \langle \left[ H(z)^{*} (Y(z)^{*} + Y(w)) H(w) - (f(z)^{*} +
     f(w)) \right] u, u' \rangle.
 $$
This leads to the Agler
 decomposition \eqref{PiHerAgdecom} with
 $$
   K_{k}(z,w) = H(z)^{*}Y_k H(w)=[Y_k^{1/2}H(z)]^*[Y_k^{1/2}H(w)]
 $$
 as in the proof of (3)$\Rightarrow$(2) in Theorem \ref{T:SA-Pi}.

 It remains to show that the growth condition \eqref{growth}
 necessarily holds if $f$ has a realization \eqref{real''} from a
 $\Pi$-impedance-conservative system node.  To see this, we note that then
 the single-variable Herglotz function $f(s \be)$ ($s \in \Pi)$ has
 an impedance-conservative system-node realization.  That the growth
 condition \eqref{growth} holds now follows from the result for the
 single-variable case (see Theorem 7.4 in \cite{Staffans-ND}).

 \smallskip

 \textbf{(2)$\Rightarrow$(1):}  The proof is parallel to the proofs
of
 (2)$\Rightarrow$(1) in Theorems \ref{T:HA-D} and \ref{T:SA-Pi}.
 Write the Agler decomposition \eqref{PiHerAgdecom} in the form
 \eqref{PiHerAgdecom'} and observe that the functional calculus gives
 $$
 f({\mathbf A})^{*} + f({\mathbf A}) =
  H({\mathbf A})^{*} (P({\mathbf A)}^{*} + P({\mathbf A}) )
  H({\mathbf A}).
  $$
  If ${\mathbf A}$ is a strictly accretive commutative $d$-tuple,
  from the diagonal form of $P(w)$ we see that
  $P({\mathbf A})^{*} + P({\mathbf A}) \ge 0$.  We conclude that $f
  \in \mathcal{HA}(\Pi^{d}, \cL(\cU))$, and (1) follows.
  \end{proof}

  In \cite{AMcCY} a criterion was given for when a
  $\Pi^{d}$-Herglotz--Agler function has a realization involving the
  structured resolvent $(P(w) - A)^{-1}$ coming from a spectral
  decomposition $(P_{1}, \dots, P_{d})$ (so $P(w) = w_{1} P_{1} +
\cdots
  + w_{d} P_{d}$) rather than just a positive decomposition $(Y_{1},
  \dots, Y_{d})$ of the identity).  We give our version of a result of
  this type, with realization in terms of a
  $\Pi$-impedance-conservative system node rather than in the form
  presented in \cite{AMcCY}.

  \begin{theorem} \label{T:HA-Pi'}
      Suppose that $f \colon \Pi^{d} \to \cL(\cU)$ is a
      $\Pi^{d}$-Herglotz--Agler function satisfying the growth
      condition \eqref{growth}.  Then the following are equivalent:
      \begin{enumerate}
       \item   There exists a Hilbert space $\cX$ and a
      $d$-fold spectral decomposition $(P_{1}, \dots,$ $ P_{d})$ of
      $I_{\cX}$
 along with a $\Pi$-impedance-conservative system node
 $$
  \bY = \begin{bmatrix} A \& B \\ C \& D \end{bmatrix} \colon
  \cD(\bU) \subset \begin{bmatrix} \cX \\ \cU \end{bmatrix} \to
  \begin{bmatrix} \cX \\ \cU \end{bmatrix}
 $$
 such that
 \begin{equation*}
     f(w) = C \& D \begin{bmatrix} \left( (P(w) -
     A)_{|\cX}\right)^{-1} B \\ I_{\cU} \end{bmatrix}.
  \end{equation*}

  \item If $S(\zeta) = \cC(f)(\zeta):= \left[
  f\left(\frac{1+\zeta}{1- \zeta}\right) - I \right]
  \left[f\left(\frac{1+\zeta}{1-\zeta}\right) + I\right]^{-1}$
(where by convention \eqref{Cayley1}
  $\frac{1+ \zeta}{1 - \zeta} =
  \left(\frac{1 + \zeta _{1}}{1 - \zeta_{1}}, \dots,
  \frac{1+ \zeta_{d}}{1 - \zeta_{d}}\right)$ if $\zeta = (\zeta_{1},
  \dots, \zeta_{d}) \in {\mathbb D}^{d}$), then $S$ is in the
  Schur--Agler class
  $\mathcal{SA}({\mathbb D}^{d}, \cL(\cU))$ and $S$ has a realization
  as in \eqref{SAreal} where $\bU = \left[\begin{smallmatrix} \bA &
  \bB \\ \bC & \bD \end{smallmatrix} \right]$ is unitary with the
  additional property that $1$ is not in the point spectrum of
  $\bU$.
\end{enumerate}
\end{theorem}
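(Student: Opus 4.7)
The equivalence rests on the Cayley-transform bijection of Proposition \ref{P:unitary/skewadj} between unitaries on a Hilbert space without $1$ in their point spectrum and densely defined skew-adjoint operators, combined with Corollary \ref{C:impconserv} linking skew-adjoint operators on $\cX \oplus \cU$ to $\Pi$-impedance-conservative system nodes on $(\cU, \cX, \cU)$. The algebraic glue is the componentwise Cayley identity
\[
  P\!\left(\tfrac{w-1}{w+1}\right) = (P(w) - I)(P(w) + I)^{-1},
\]
valid precisely because $(P_{1}, \dots, P_{d})$ is a \emph{spectral} (not merely positive) decomposition of $I_{\cX}$: the $P_{k}$ commute pairwise and sum to the identity, so that $(P(w) + I)^{-1} = \sum_{k} (w_{k}+1)^{-1} P_{k}$. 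This identity converts the structured resolvent $(P(w) - A)^{-1}$ of \eqref{real''} into the disk resolvent $(I - P(\zeta) \bA)^{-1}$ of \eqref{SAreal}, and conversely.

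For (2)$\Rightarrow$(1), I would start with the unitary Schur--Agler colligation $\bU = \sbm{\bA & \bB \\ \bC & \bD}$ realizing $S = \mathcal{C}(f)$ and form its Cayley transform $(I + \bU)(I - \bU)^{-1}$, a densely defined skew-adjoint operator on $\cX \oplus \cU$ by Proposition \ref{P:unitary/skewadj} (the hypothesis that $1$ is not in the point spectrum of $\bU$ is precisely what makes the inverse well defined on a dense domain). Setting $\cJ'_{0} = \sbm{I_{\cX} & 0 \\ 0 & -I_{\cU}}$, Corollary \ref{C:impconserv} then produces a $\Pi$-impedance-conservative system node $\bY$ from this skew-adjoint operator, provided that condition (2) of the corollary is met---namely, that the projection onto $\cU$ of the Cayley-transform domain exhausts $\cU$. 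Unpacking, this amounts to the row operator $\begin{bmatrix} -\bC & I - \bD \end{bmatrix}$ being surjective from $\cX \oplus \cU$ onto $\cU$; the assumption that $1$ is not in the point spectrum of $\bU$ supplies only the density of this range, so the remaining closedness/surjectivity is the main technical obstacle, and I expect it to follow from the growth condition \eqref{growth} via an argument modelled on the single-variable impedance-conservative theory of \cite[Theorem~7.4]{Staffans-ND}.

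Once $\bY$ is in place, I would verify the realization formula \eqref{real''} by substituting $\zeta = (w-1)/(w+1)$ componentwise into $S(\zeta) = \bD + \bC(I - P(\zeta)\bA)^{-1} P(\zeta)\bB$ and using the displayed algebraic identity to re-express $P(\zeta)$ in terms of $P(w)$; after routine simplification, the relation $f(w) = (I + S(\zeta))(I - S(\zeta))^{-1}$ rewrites in the desired form \eqref{real''}. For (1)$\Rightarrow$(2), I would run the construction in reverse: Corollary \ref{C:impconserv} attaches to $\bY$ a skew-adjoint operator on $\cX \oplus \cU$ (given by $-\bY \cJ'_{0}$), and its inverse Cayley transform is a unitary $\bU$ on $\cX \oplus \cU$ by Proposition \ref{P:unitary/skewadj}; the condition that $1$ is not in the point spectrum of $\bU$ is automatic, since $I - \bU$ equals twice the resolvent of the skew-adjoint operator at $-1$ and is therefore injective. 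Decomposing $\bU = \sbm{\bA & \bB \\ \bC & \bD}$ according to $\cX \oplus \cU$ and invoking the same algebraic identity then yields $\bD + \bC(I - P(\zeta)\bA)^{-1} P(\zeta)\bB = \mathcal{C}(f)(\zeta)$ as required.

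I expect the hardest step to be the surjectivity verification in (2)$\Rightarrow$(1) using the growth condition \eqref{growth}, together with careful handling of the possibly unbounded operators and the Gelfand triple $\cX_{1} \subset \cX \subset \cX_{-1}$ needed to interpret expressions such as $\bigl((P(w) - A)_{|\cX}\bigr)^{-1} B$ in the extended state space.
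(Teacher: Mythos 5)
Your overall strategy coincides with the paper's: Cayley-transform the colligation $\bU$ into a skew-adjoint operator via Proposition \ref{P:unitary/skewadj}, feed it into Corollary \ref{C:impconserv} to get a $\Pi$-impedance-conservative system node, and translate the Givone--Roesser resolvent into the structured resolvent $(P(w)-A)^{-1}$ via the componentwise identity $P\bigl(\tfrac{w-1}{w+1}\bigr)=(P(w)-I)(P(w)+I)^{-1}$. You also correctly isolate the one nontrivial point in (2)$\Rightarrow$(1): condition (2) of Corollary \ref{C:impconserv}, i.e.\ that for every $u\in\cU$ some $\sbm{x_u\\u}$ lies in $\cD(\widetilde\bY)={\rm Ran}(I-\bU)$, equivalently surjectivity (not mere density) of $\begin{bmatrix}-\bC & I-\bD\end{bmatrix}$.

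However, you leave that step unproved and propose the wrong mechanism for it. The growth condition \eqref{growth} and the single-variable theory of \cite[Theorem 7.4]{Staffans-ND} are not what closes this gap; the growth condition only enters the theorem as a standing hypothesis needed for consistency with statement (1) (an impedance-conservative realization forces \eqref{growth}). The actual argument is purely algebraic and uses the fact that $S=\cC(f)$ is the Cayley transform of the Herglotz function $f$, so that $I-S(\zeta)=2(f(w)+I)^{-1}$ is \emph{invertible}. Concretely, the colligation identity $\bU\sbm{P(\zeta)\widetilde H(\zeta)\\ I}u=\sbm{\widetilde H(\zeta)\\ S(\zeta)}u$ (where $\widetilde H$ comes from the Agler decomposition underlying the realization of $S$) gives
$$
(I-\bU)\begin{bmatrix}P(\zeta)\widetilde H(\zeta)\\ I\end{bmatrix}u
=\begin{bmatrix}(P(\zeta)-I)\widetilde H(\zeta)u\\ (I-S(\zeta))u\end{bmatrix},
$$
and since $I-S(\zeta)$ is onto $\cU$, every $u\in\cU$ appears as the second component of an element of ${\rm Ran}(I-\bU)$, with explicit first component $x_u=(P(\zeta)-I)\widetilde H(\zeta)(I-S(\zeta))^{-1}u=-H(w)u$. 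This simultaneously verifies the domain condition and produces the intertwining relation $\bY\sbm{H(w)\\ I}u=\sbm{P(w)H(w)\\ f(w)}u$ from which the realization formula \eqref{real''} is read off; by contrast, a direct ``substitute $\zeta=(w-1)/(w+1)$ and simplify'' computation is delicate here because $C\&D$ is only defined on $\cD(\bY)$, so the graph/intertwining formulation is not merely cosmetic. Your treatment of (1)$\Rightarrow$(2), including the observation that injectivity of $I-\bU=2(\widetilde\bY+I)^{-1}$ is automatic, is correct and matches the paper.
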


\begin{proof}

\textbf{(1)$\Rightarrow$(2):}  We suppose that we are given a
$\Pi$-impedance-conservative system node $\bY$ as in condition (1).
We set
$$
 H(w) = \left( (P(w) - A)_{|\cX}\right)^{-1} B \colon \cU \to \cX
$$
(well-defined by Proposition \ref{P:strucresol}) and verify that
\begin{equation}   \label{bYeq}
 \begin{bmatrix} A \& B \\ C \& D \end{bmatrix} \begin{bmatrix} H(w)
u
     \\ u \end{bmatrix} = \begin{bmatrix} P(w) H(w) u \\ f(w) u
 \end{bmatrix}
 \end{equation}
for all $u \in \cU$.  Furthermore, working as in the proof of (3)
$\Rightarrow$ (2) in Theorem \ref{T:HA-Pi}, we see that $H(w)$ so
defined provides an Agler decomposition for $f$:
$$
 f(z)^{*} + f(w) = \sum_{k=1}^{d} (\overline{z}_{k} + w_{k}) H(z)^{*}
 P_{k} H(w).
$$
If we set $\widetilde \bY = \left[ \begin{smallmatrix} - (A \& B)
\\ C \& D \end{smallmatrix} \right]$, then the fact that $\bY =
\left[
\begin{smallmatrix} A \& B \\ C \& D \end{smallmatrix} \right]$ is an
impedance-conservative system
node means that $\widetilde \bY$ is skew-adjoint (see Corollary
\ref{C:impconserv}):
$\widetilde \bY^{*} = - \widetilde \bY$.  Moreover we can rewrite the
identity \eqref{bYeq} in the form
\begin{equation}  \label{Yeq}
 \widetilde \bY \begin{bmatrix} H(w) u \\ u \end{bmatrix}
=\begin{bmatrix}  -P(w)  H(w) u \\ f(w)
 u \end{bmatrix}
\end{equation}

As $\widetilde \bY$ is skew-adjoint,  easily verified properties
of the Cayley transform imply that $\bU: = (\widetilde \bY -
I)(\widetilde \bY +I)^{-1}$ is unitary and the point $1$ is not in
the point spectrum of $\bU$ (this is another version of Proposition
\ref{P:unitary/skewadj}).  It remains only to check that $\bU
=: \left[ \begin{smallmatrix}  \bA & \bB \\ \bC & \bD
\end{smallmatrix} \right]$ provides a ${\mathbb
D}^{d}$-scattering-conservative realization of $S:= \cC(f)$.

We rewrite \eqref{Yeq} in terms of $\bU$ as
$$
(I + \bU) (I - \bU)^{-1} \begin{bmatrix} H(w) u \\ u \end{bmatrix}
= \begin{bmatrix} -P(w) H(w) u \\ f(w)u \end{bmatrix}
$$
or equivalently
$$
(I + \bU) \begin{bmatrix} H(w) u \\ u \end{bmatrix} = (I - \bU)
\begin{bmatrix} -P(w) H(w) u \\ f(w) u \end{bmatrix}
    $$
We reorganize this using linearity to get
\begin{equation}  \label{Ueq}
    \bU \begin{bmatrix}  (P(w)-I) H(w) u \\ -(f(w)+I) u
\end{bmatrix} = \begin{bmatrix} ( P(w) + I) H(w) u \\ -(f(w)-I) u
\end{bmatrix}.
\end{equation}
We introduce $\cL(\cU, \cX_{k})$-valued functions $\widetilde H_{k}$
on the polydisk ${\mathbb D}^{d}$ according to the relation
(where we again use the convention \eqref{Cayley2})
$$
 H_{k}(w): = P_{k} H(w) = \frac{1}{w_{k}+1} \widetilde H_{k}\left( \frac{w-1}{w+1}
 \right) (f(w)+I).
$$
Then we note that
\begin{align*}
    (P(w) - I) H(w) & = \sum_{k=1}^{d} (w_{k}-1) P_{k} H(w)  \\
    & = \sum_{k=1}^{d} \frac{w_{k}-1}{w_{k}+1} \widetilde
    H_{k}(\zeta) (f(w) + I) \\
    & = \sum_{k=1}^{d} \zeta_{k}  \widetilde H_{k}(\zeta) (f(w)
    + I) \\
    & = P(\zeta) \widetilde H(\zeta) (f(w) + I)
\end{align*}
where we set
$$
\zeta = \frac{w-1}{w+1} \text{ (as in \eqref{Cayley2}) for } w \in
\Pi^{d}
$$
and
$$
    \widetilde H(\zeta) = \sum_{k=1}^{d}  \widetilde
    H_{k}(\zeta).
$$
Similarly one can verify that
$$
(P(w) + I) H(w) = \sum_{k=1}^{d}\frac{w_{k}+1}{w_{k}+1} \widetilde
H_{k}(\zeta)(f(w) + I) = \widetilde H(\zeta) (f(w) + I),
$$
and we have arrived at the pair of identities
\begin{align*}
    & (P(w) -I) H(w) = P(\zeta) \widetilde H(\zeta) (I + f(w)) \\
    &  (P(w)+I) H(w) = \widetilde H(\zeta) (I + f(w)).
\end{align*}
Fix a vector $v  \in \cU$.  Define $u_{w} = (I + f(w))^{-1} v$ so
$v = (I + f(w)) u_{w}$. Then \eqref{Ueq} with $u = u_{w}$ can be
rewritten in the form
\begin{equation}   \label{Ueq'}
    \bU \begin{bmatrix}  P(\zeta) \widetilde H(\zeta) v \\ -v
\end{bmatrix} = \begin{bmatrix} \widetilde H(\zeta) v \\ -S(\zeta) v
\end{bmatrix}.
\end{equation}
Writing out $\bU = \left[ \begin{smallmatrix} \bA & \bB \\ \bC & \bD
\end{smallmatrix} \right]$, one can now solve \eqref{Ueq'} in the
standard way to arrive at
$$
 S(\zeta) = \bD + \bC P(\zeta) (I - \bA P(\zeta))^{-1} \bB,
$$
i.e., the unitary colligation matrix $\bU$ with the additional
property that $\bU$ does not have $1$ as an eigenvalue provides a
${\mathbb D}^{d}$-scattering-conservative realization for $S =
\cC(f)$.

\smallskip

\textbf{(2)$\Rightarrow$(1):}  We suppose that $S = \cC(f)$ has a
${\mathbb D}^{d}$-scattering-conservative realization \eqref{SAreal} where the
associated unitary colligation matrix $\bU = \left[
\begin{smallmatrix} \bA & \bB \\ \bC & \bD \end{smallmatrix} \right]$
    does not have $1$ as an eigenvalue.  Then we know that $\bU$
    also has the defining property
\begin{equation}   \label{Ueq''}
    \bU \begin{bmatrix} P(\zeta) \widetilde H(\zeta) \\ I
\end{bmatrix} u = \begin{bmatrix} \widetilde H(\zeta) \\ S(\zeta)
\end{bmatrix} u
\end{equation}
for all $u \in \cU$ and $\zeta \in {\mathbb D}^{d}$
where $\widetilde H(\zeta) = \left[ \begin{smallmatrix}
\widetilde H_{1}(\zeta) \\ \vdots \\ \widetilde H_{d}(\zeta)
\end{smallmatrix}
\right]$ provides a ${\mathbb D}^{d}$-Schur--Agler decomposition
\eqref{DSchurAgdecom}.  Since $1$ is not an eigenvalue of $\bU$ by
assumption we may form the Cayley transform
$$ \widetilde \bY : = (I + \bU) (I - \bU)^{-1}
 \text{ with } \cD(\widetilde \bY) = {\rm Ran} (I- \bU).
$$
By Proposition \ref{P:unitary/skewadj}, $\widetilde \bY$
is skew-adjoint:
\begin{equation}  \label{tildeYskew}
    \widetilde{\bY} = - \widetilde{\bY}^{*}.
\end{equation}

    By construction we have
 \begin{equation}   \label{Yeq'}
 \widetilde \bY \colon (I - \bU) \begin{bmatrix} x \\ u \end{bmatrix}
 \mapsto (I + \bU) \begin{bmatrix} x \\ u \end{bmatrix}.
\end{equation}
From \eqref{Ueq''} we note that
\begin{align*}
  & (I - \bU) \begin{bmatrix} P(\zeta) \widetilde H(\zeta) \\ I
\end{bmatrix} u
= \begin{bmatrix} (P(\zeta) - I) \widetilde H(\zeta) u \\ (I -
S(\zeta)) u \end{bmatrix}, \\
 & (I + \bU) \begin{bmatrix} P(\zeta) \widetilde H(\zeta) \\ I
\end{bmatrix} u
 = \begin{bmatrix} (P(\zeta) + I) \widetilde H(\zeta) \\ (I +
S(\zeta)) u \end{bmatrix}.
\end{align*}
Notice that $(I - S(\zeta)) = 2 (f(w) + I)^{-1}$ is invertible.
Hence \eqref{Yeq'} leads to
\begin{equation}   \label{Yeq''}
   \widetilde \bY \begin{bmatrix} (P(\zeta) - I) \widetilde
   H(\zeta) (I - S(\zeta))^{-1} \\ I \end{bmatrix} u =
   \begin{bmatrix} (P(\zeta) + I) \widetilde H(\zeta) (I - S(\zeta))^{-1} u
       \\ (I + S(\zeta)) (I - S(\zeta))^{-1} u
       \end{bmatrix}.
 \end{equation}
 Let us set
 $$
 H_{k}(w) = \frac{1}{w_{k}+1} \widetilde H_{k} \left( \frac{w-1}{w+1}
 \right) (I + f(w))
 $$
 where we use the convention \eqref{Cayley2} as usual,
 and note that
 $$ I + f(w) =  2 (I - S(\zeta))^{-1}.
 $$
 Then we compute
 \begin{align*}
     (P(\zeta) - I) \widetilde H(\zeta) ( I - S(\zeta))^{-1} & =
     \sum_{k=1}^{d} \left( \frac{w_{k}-1}{w_{k}+1} - 1 \right)
     \widetilde H_{k}(\zeta) (I - S(\zeta))^{-1} \\
     & = - \sum_{k=1}^{d} \frac{2}{w_{k}+1}  \widetilde
     H_{k}(\zeta) (I - S(\zeta))^{-1} \\
     & = - \sum_{k=1}^{d}  H_{k}(w) = -H(w)
 \end{align*}
 and similarly
 \begin{align*}
     (P(\zeta) + I) \widetilde H(\zeta) (I - S(\zeta))^{-1} & =
     \sum_{k=1}^{d} \left( \frac{w_{k}-1}{w_{k}+1} + 1 \right)
     \widetilde H_{k}(\zeta) (I - S(\zeta))^{-1}  \\
     & = \sum_{k=1}^{d} \frac{2 w_{k}}{w_{k}+1}  \widetilde
     H_{k}(\zeta) (I - S(\zeta)^{-1} \\
     & = \sum_{k=1}^{d} w_{k}  H_{k}(w) = P(w) H(w).
\end{align*}
 From these identities we see that the identity \eqref{Yeq''} is
 equivalent to
 \begin{equation}  \label{eqY'''}
 \widetilde \bY \begin{bmatrix} -H(w) \\ I \end{bmatrix} u =
 \begin{bmatrix} P(w) H(w) \\ f(w) \end{bmatrix} u.
 \end{equation}
 In particular, {\em for $u \in \cU$ then there is an $x_{u} = -H(w) u \in \cX$ so
 that $\left[ \begin{smallmatrix} x_{u} \\ u \end{smallmatrix}
 \right] \in \cD({\widetilde \bY})$}.

 As we have already observed that
 $\widetilde Y$ is skew-adjoint (see \eqref{tildeYskew}), we now have all
 the hypotheses needed in order to apply Corollary \ref{C:impconserv}
 to conclude that  $\bY =\widetilde \bY \left[ \begin{smallmatrix} -I &
 0 \\ 0 & I \end{smallmatrix} \right] =:
 \left[\begin{smallmatrix} A \& B \\ C \& D \end{smallmatrix} \right]$ is a
 $\Pi$-impedance-conservative system node.  Moreover the relation \eqref{eqY'''}
    leads to the relation
 \begin{equation*}
     \bY \begin{bmatrix} H(w) \\ I \end{bmatrix} u = \begin{bmatrix}
     P(w) H(w) \\ f(w) \end{bmatrix} u.
 \end{equation*}
 We now recover $f(w)$ as the transfer-function for the
 $\Pi$-impedance-conservative system node $\bY$ exactly as in the
 proof of (2)$\Rightarrow$(3) in Theorem \ref{T:HA-Pi}.
\end{proof}

\begin{remark} {\em The implication (2)$\Rightarrow$(1) in Theorem
    \ref{T:HA-Pi'} is due essentially to Agler--McCarthy--Young
    \cite{AMcCY} (without explicit reference to
    $\Pi$-impedance-conservative system nodes), where the result
    is worked out for the scalar-valued case in the
Nevan\-linna--Agler (rather than Herglotz--Agler) setting.
   } \end{remark}

\section{Bessmertny\u{\i} long resolvent representations for
Herglotz--Agler functions}  \label{S:Bes}

Bessmertny\u{\i} long-resolvent  representations were introduced
by Bessmertny\u{\i} in connection with the study of general
rational matrix functions of several variables, with a special
symmetrized form of such a representation handling functions $f$
in the $\Pi^{d}$-Herglotz--Agler class having an extension to
$\Omega_{d}: = \bigcup_{\lambda \in {\mathbb T}} (\lambda \Pi)^{d}
\subset {\mathbb C}^{d}$ satisfying additional symmetry conditions
(see \cite{Bes, Bes1, Bes2, Bes3, Bes4, K-V} and our companion
paper \cite{BK-V} for more detail).  In \cite{B}, a relaxation of
the symmetrized Bessmertny\u{\i} long-resolvent representation was
proposed which handles more general Herglotz--Agler-class
functions (i.e., the homogeneity property is discarded).

Given a $2 \times 2$-block operator pencil
   \begin{equation}   \label{Bes-pen}
   \begin{bmatrix}  \bV_{11}(w) & \bV_{12}(w) \\ \bV_{21}(w) & \bV_{22}(w)
  \end{bmatrix} =: \bV(w) = \bV_{0} + w_{1} \bV_{1} + \cdots +
  w_{d}\bV_{d} \in \cL(\cU \oplus \cX)
   \end{equation}
we define the {\em transfer function} $f_{\bV}(w)$ associated with the
operator pencil $\bV$ by
 \begin{equation}   \label{Bes-charfunc}
    f_{\bV}(w): = \bV_{11}(w) - \bV_{12}(w) \bV_{22}(w)^{-1} \bV_{21}(w).
  \end{equation}
wherever the formula makes sense.  Let us say that a representation
$f(w) = f_{\bV}(w)$ of the form
\eqref{Bes-charfunc} for a given $f$
is a {\em $\cB$-realization}  if the operator pencil $\bV(w)$ satisfies
(1) the {\em $\cB$-symmetry condition} $\bV(w) =
-\bV(-\overline{w})^{*}$, namely
 \begin{equation}   \label{constraint1}
       \bV_{0} + \bV_{0}^{*} = 0, \quad \bV_{k} = \bV_{k}^{*} \text{
       for } k = 1, \dots, d,
    \end{equation}
and (2) the $\cB$-positivity condition
 \begin{equation}   \label{constraint2}
    \bV_{k} = \bV_{k}^{*} \ge 0 \text{ for } k=1, \dots, d \text{
    with } \sum_{k=1}^{d} \bV_{22,k} \text{ strictly positive
    definite.}
    \end{equation}
It was shown in \cite{B} that any function $f$ of the form
\eqref{Bes-charfunc} with pencil $\bV(w)$ satisfying conditions
\eqref{constraint1} and \eqref{constraint2} is  in the
Herglotz--Agler class $\mathcal{HA}(\Pi^{d}, \cL(\cU))$.

 We note that the realization \eqref{real''} in
  Theorem \ref{T:HA-Pi} can formally be considered as
  a $\cB$-realization with operator pencil $\bV(w) = \bV_{0} +
  w_{1} \bV_{1} + \cdots + w_{d} \bV_{d}$ given by
  $$
  \bV_{0} = \begin{bmatrix}  D \& C \\-( B \& A) \end{bmatrix} =
  -\bV_{0}^{*}, \quad \bV_{k} = \begin{bmatrix} 0 & 0 \\ 0 & Y_{k}
\end{bmatrix} = \bV_{k}^{*} \ge 0 \text{ for } k = 1, \dots, d
$$
meeting all the constraints
\eqref{constraint1}--\eqref{constraint2} for a $\cB$-realization
of $f$ with the exception that $\bV_{0}$ is unbounded.  We now make
precise how a general Herglotz--Agler-class function (i.e., with
the growth condition at infinity \eqref{growth} removed) can be
completely characterized in terms of possession of a
nonhomogeneous Bessmertny\u{\i}-type representation of the form
\eqref{Bes-charfunc} subject to \eqref{constraint1} and
\eqref{constraint2} but with possibly unbounded skew-adjoint
operator $\bV_{0}$.

We shall consider unbounded pencils of the form
\eqref{Bes-pen} but with the constant term
$$
\bV_{0} = \begin{bmatrix} \bV_{0,11} & \bV_{0,12} \\ \bV_{0,21} & \bV_{0,22}
\end{bmatrix}
$$
 a possibly unbounded  operator on  $\left[
\begin{matrix} \cU \\ \cX \end{matrix} \right]$ satisfying
    what we shall call the {\em Herglotz--Agler system node}
    properties:
  \begin{enumerate}
      \item[{}] (HA1) {\em $\bV_{0}$ is skew-adjoint on $\left[
      \begin{smallmatrix} \cU \\ \cX \end{smallmatrix} \right]$}.
    \item[{}] (HA2) {\em For each $u \in \cU$ there is an
    $x_{u} \in \cX$ so that $\left[ \begin{smallmatrix} u \\ x_{u}
    \end{smallmatrix} \right] \in \cD(\bV_{0})$.}
 \end{enumerate}
 The gist of conditions (HA1) and (HA2) is that the reorganized
 colligation matrix
 $$ \begin{bmatrix} A \& B \\ C \& D \end{bmatrix} =
 \begin{bmatrix} 0 & - I_{\cX} \\ I_{\cU} & 0 \end{bmatrix}
     \bV_{0} \begin{bmatrix} 0 & I_{\cU} \\ I_{\cX} & 0 \end{bmatrix}
 $$
 is an impedance-conservative system node as discussed in Example
 \ref{E:impconserv}.  To reduce the number of subscripts and to
 suggest the connection with system nodes in the work of
 Staffans et al. \cite{MSW,Staffans-ND, Staffans, BS}, we shall use
 the notation encoded in the following formal definition.

 \begin{definition} \label{D:HApencil}
     We shall say that the pencil \eqref{Bes-pen} is a {\em
     Herglotz--Agler operator pencil} if the following conditions hold:
  \begin{enumerate}
      \item $\bV_{0}$ has the form
      $$
         \bV_{0} = \begin{bmatrix} D \& C \\ -(B \& A) \end{bmatrix} :
     = \begin{bmatrix} 0 & I \\ - I & 0 \end{bmatrix}
     \begin{bmatrix} A \& B \\ C \& D \end{bmatrix}
         \begin{bmatrix} 0 & I \\ I & 0 \end{bmatrix}
   $$
   where $\left[ \begin{matrix} A \& B \\ C \& D
\end{matrix} \right]$ is an impedance-conservative system node
as in Example~\ref{E:impconserv}.

\item The homogeneous part of the pencil,  $\bV_{\rm
H}(w):=\sum_{k=1}^{d} w_{k} \bV_{k}$ has  each
$$  \bV_{k} = \begin{bmatrix} \bV_{k,11} & \bV_{k,12} \\
\bV_{k,21} & \bV_{k,22} \end{bmatrix}
$$
 a bounded positive semidefinite operator on $\cU \oplus \cX$
with the sum having the form
\begin{equation}   \label{HApen2}
\bV_{\rm H}(\be)=\sum_{k=1}^{d} \begin{bmatrix}  \bV_{k,11} & \bV_{k,12} \\
\bV_{k,21} & \bV_{k,22} \end{bmatrix} = \begin{bmatrix} \bV_{{\rm
H},11}(\be) & 0 \\ 0 & I
\end{bmatrix}
\end{equation}
where we use the notation $\be = (1, \dots, 1)$ as in \eqref{growth}.
\end{enumerate}
 \end{definition}

Let us suppose that
$$
\bV(w) =\bV_0+\bV_{\rm H}(w)= \begin{bmatrix} D \& C \\ - (B \& A)
\end{bmatrix} + \begin{bmatrix} \bV_{{\rm H},11}(w) & \bV_{{\rm
H},12}(w) \\ \bV_{{\rm H},21}(w) & \bV_{{\rm H},22}(w) \end{bmatrix}
$$
is a Herglotz--Agler operator pencil.  Thus in particular $\left[
\begin{matrix} 0 & -I \\ I & 0 \end{matrix} \right] \bV_{0}
    \left[
\begin{matrix} 0 & I \\ I & 0 \end{matrix} \right]$ has all
    the properties delineated in Definition \ref{D:sysnode}, with the
    additional property that $\bV_{0} = -\bV_{0}^{*}$ (see Example
    \ref{E:impconserv}).  In particular we may write
 $$
  B \& A  = \left. \begin{bmatrix} B &
 A_{|\cX}\end{bmatrix}\right|_{\cD(\bV_{0})}.
 $$
 Here $A_{|\cX} \colon \cX \to \cX_{-1}$ is the extension of the
 skew-adjoint operator $A$ defined by
 $$
 \cD(A) = \left\{ x \colon \begin{bmatrix} 0 \\ x \end{bmatrix} \in
 \cD(\bV_{0}) \right\}
 \text{ with } Ax  = -\bV_{0} \begin{bmatrix} 0 \\ x
\end{bmatrix}
 $$
and with $B \colon \cU \to \cX_{-1}$ constructed via
$$
  B u =  B \& A \begin{bmatrix} u \\ x_{u} \end{bmatrix} - A_{|\cX} x
$$
where $x_{u}$ is any choice of vector in $\cX$ such that $\left[
\begin{smallmatrix} u \\ x_{u} \end{smallmatrix} \right] \in
    \cD(\bV_{0})$; it can be shown that $B$ is well defined, i.e., the
    formula for $Bu$ is independent of the choice of $x_{u} \in \cX$.
    Furthermore, the domain of $\bV_{0}$ is the same as the domain of $
    B \& A$ and has the precise characterization
$$
\cD(\bV_{0}) = \cD(  B \& A ) = \left\{ \begin{bmatrix} u \\ x
\end{bmatrix} \in \begin{bmatrix} \cU \\ \cX \end{bmatrix} \colon Bu
+ A_{|\cX} x \in \cX \right \}.
$$
The following proposition gives some additional key properties for
Herglotz--Agler operator pencils.

\begin{proposition}   \label{P:HApen}
    Suppose that
 $$
   \bV(w) = \bV_{0} + \bV_{\rm H}(w) : = \begin{bmatrix} D \& C \\ -(B \& A)
   \end{bmatrix} +
   \begin{bmatrix} \bV_{{\rm H},11}(w) & \bV_{{\rm H},12}(w) \\ \bV_{{\rm H},21}(w) &
\bV_{{\rm H},22}(w) \end{bmatrix}
$$
is a Herglotz--Agler operator pencil.  Then, for a given $w \in
\Pi^{d}$, the formal Bessmertny\u{\i} transfer function
 \eqref{Bes-charfunc},
\begin{align*}
    f_{\bV}(w) & = \bV_{11}(w) - \bV_{12}(w)
    \bV_{22}(w)^{-1}\bV_{21}(w) \\
    & = ( D + \bV_{{\rm H},11}(w)) - (C + \bV_{{\rm H},12}(w))
    (\bV_{{\rm H},22}(w) -
    A)^{-1} (-B + \bV_{{\rm H},21}(w)),
\end{align*}
can be interpreted as a bounded operator on $\cU$ equal to the sum of five
well-defined bounded terms
\begin{equation}  \label{fVj}
  f_{\bV}(w) = f_{\bV,1}(w) + f_{\bV,2}(w) + f_{\bV,3}(w) +
  f_{\bV,4}(w) + f_{\bV, 5}(w)
\end{equation}
where
\begin{align}
    f_{\bV,1}(w) & = D \& C \begin{bmatrix} I \\ (\bV_{{\rm H},22}(w) -
    A)^{-1} B \end{bmatrix}, \notag  \\
 f_{\bV,2}(w) & = \bV_{{\rm H},12}(w) \left( (\bV_{{\rm H},22}(w) -
 A)_{|\cX}\right)^{-1} \bV_{{\rm H},21}(w), \notag \\
 f_{\bV,3}(w) & = - C (\bV_{\rm {}H,22}(w) - A)^{-1} \bV_{{\rm H},21}(w), \notag \\
f_{\bV,4}(w) & = - \bV_{{\rm H}, 12}(w) (\bV_{{\rm H},22}(w) - A)^{-1}
\bV_{{\rm H},21}(w), \notag \\
f_{\bV,5}(w)  & = \bV_{{\rm H},11}(w).
\label{fVj'}
\end{align}

Directly in terms of the Bessmertny\u{\i} pencil $\bV(w)$, we have,
for each $w \in \Pi^{d}$,
\begin{align}
  &  \begin{bmatrix} I \\ -\bV_{22}(w)^{-1} \bV_{21}(w) \end{bmatrix}
    \in \cL(\cU,  \cD(\bV_{0})),  \label{Bes1} \\
 & \bV_{11}(w) \& \bV_{12}(w) \in \cL(\cD(\bV_{0}), \cU)   \label{Bes2},
\end{align}
and we recover $f_{\bV}(w)$ as the composition of bounded operators
\begin{equation}   \label{Bes3}
  f_{\bV}(w) = \bV_{11}(w) \& \bV_{12}(w) \cdot
  \begin{bmatrix} I \\ -\bV_{22}(w)^{-1} \bV_{21}(w) \end{bmatrix}
      \in \cL(\cU).
 \end{equation}
\end{proposition}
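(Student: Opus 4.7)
\textbf{Proof plan for Proposition \ref{P:HApen}.}

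The plan is to peel off the genuinely unbounded behavior of $\bV(w)$ into the structured resolvent $(\bV_{{\rm H},22}(w)-A)^{-1}$ and then read all five terms in \eqref{fVj'} as compositions of bounded operators.  Let $\Sigma=\sbm{A\&B\\ C\&D}$ denote the impedance-conservative system node underlying $\bV_0$ (via the coordinate flip in Definition~\ref{D:HApencil}).  By Corollary~\ref{C:impconserv}, its main operator $A$ is skew-adjoint, hence in particular maximal dissipative, with $B\in\cL(\cU,\cX_{-1})$, $C\in\cL(\cX_1,\cU)$, and $D\&C$ bounded from $\cD(\Sigma)$ (graph norm) into $\cU$.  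The hypothesis \eqref{HApen2} in Definition~\ref{D:HApencil} says that $(\bV_{1,22},\dots,\bV_{d,22})$ is a positive decomposition of $I_\cX$, so $\bV_{{\rm H},22}(w)=\sum_k w_k\bV_{k,22}$ is exactly the operator $Y(w)$ of Proposition~\ref{P:strucresol}.

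First, I apply Proposition~\ref{P:strucresol} to get the three ingredients I will need throughout: for each $w\in\Pi^d$, the operator $\bV_{{\rm H},22}(w)-A$ is a bicontinuous bijection from $\cX_1$ onto $\cX$, its extension $(\bV_{{\rm H},22}(w)-A)_{|\cX}$ is a bicontinuous bijection from $\cX$ onto $\cX_{-1}$, and the vector $\sbm{((\bV_{{\rm H},22}(w)-A)_{|\cX})^{-1}Bu\\ u}$ lies in $\cD(\Sigma)$ for every $u\in\cU$.

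Second, I establish \eqref{Bes1}.  Splitting $\bV_{21}(w)u=-Bu+\bV_{{\rm H},21}(w)u$ into its $\cX_{-1}$-valued and $\cX$-valued parts and using the two forms of the resolvent, I interpret
\[
-\bV_{22}(w)^{-1}\bV_{21}(w)u
=\bigl((\bV_{{\rm H},22}(w)-A)_{|\cX}\bigr)^{-1}Bu
-(\bV_{{\rm H},22}(w)-A)^{-1}\bV_{{\rm H},21}(w)u\in\cX.
\]
To check $\sbm{u\\ \cdot}\in\cD(\bV_0)$, I use the characterization \eqref{defdomain} of $\cD(\Sigma)$: I must verify that $Bu+A_{|\cX}(-\bV_{22}(w)^{-1}\bV_{21}(w)u)$ belongs to $\cX$.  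A direct computation using the identity $A_{|\cX}=\bV_{{\rm H},22}(w)-(\bV_{{\rm H},22}(w)-A)_{|\cX}$ collapses this expression to $\bV_{{\rm H},22}(w)(-\bV_{22}(w)^{-1}\bV_{21}(w)u)+\bV_{{\rm H},21}(w)u$, which is in $\cX$ since $\bV_{{\rm H},22}(w)$ and $\bV_{{\rm H},21}(w)$ are bounded and range in $\cX$; boundedness of $\sbm{I\\ -\bV_{22}(w)^{-1}\bV_{21}(w)}\colon\cU\to\cD(\bV_0)$ follows from the open mapping theorem applied to the graph-norm picture.  Claim \eqref{Bes2} is immediate: $\bV_{11}(w)\&\bV_{12}(w)=D\&C+[\bV_{{\rm H},11}(w)\ \bV_{{\rm H},12}(w)]$, the first summand being bounded on $\cD(\bV_0)$ in graph norm and the second being plainly bounded.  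Composing these two bounded maps proves \eqref{Bes3}.

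Finally, I obtain the five-term decomposition \eqref{fVj}--\eqref{fVj'} by expanding \eqref{Bes3} algebraically.  Writing $\bV_{11}(w)\&\bV_{12}(w)$ as $D\&C+\bV_{{\rm H},11}(w)\oplus\bV_{{\rm H},12}(w)$ and $-\bV_{22}(w)^{-1}\bV_{21}(w)$ as above, and multiplying out, the cross-terms split naturally into the pieces listed: $D\&C$ acting on $\sbm{I\\ (\bV_{{\rm H},22}(w)-A)^{-1}B\cdot}$ is $f_{\bV,1}$; the $\bV_{{\rm H},12}(w)$-times-$(\bV_{{\rm H},22}(w)-A)_{|\cX}^{-1}B$ contribution is $f_{\bV,2}$; the $-C$ and $-\bV_{{\rm H},12}(w)$ times $(\bV_{{\rm H},22}(w)-A)^{-1}\bV_{{\rm H},21}(w)$ contributions are $f_{\bV,3},f_{\bV,4}$; and the residual $\bV_{{\rm H},11}(w)$ is $f_{\bV,5}$.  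The main obstacle is purely notational bookkeeping: distinguishing, at each occurrence of the resolvent, whether it acts on an $\cX$-valued operand (bounded inverse $\cX\to\cX_1$) or on a $\cX_{-1}$-valued one (extended inverse $\cX_{-1}\to\cX$), and making sure the inseparable pair $D\&C$ is kept together in $f_{\bV,1}$ since $D$ has no meaning on its own.
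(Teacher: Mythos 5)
Your proposal is correct and follows essentially the same route as the paper: identify $\bV_{{\rm H},22}(w)$ with the structured operator $Y(w)$ of Proposition \ref{P:strucresol} for the (skew-adjoint, hence maximal dissipative) main operator $A$, split $-\bV_{22}(w)^{-1}\bV_{21}(w)$ into the extended-resolvent piece applied to $B$ plus the ordinary-resolvent piece applied to $\bV_{{\rm H},21}(w)$, verify \eqref{Bes1}--\eqref{Bes2} via the graph-norm/domain characterization, and expand \eqref{Bes3} into the five bounded terms. Your direct check of \eqref{Bes1} via \eqref{defdomain} is just the computation behind part (4) of Proposition \ref{P:strucresol}, which the paper cites instead (and note the skew-adjointness of $A$ comes from the structure of impedance-conservative system nodes as in Proposition \ref{P:impconserv}, rather than literally from Corollary \ref{C:impconserv}), so the two arguments coincide in substance.
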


 \begin{proof}

 {\em Analysis of $f_{\bV,1}$:}  It follows from part (4) of
 Proposition \ref{P:strucresol} that the operator $\sbm{ \left(
 (\bV_{{\rm H},22}(w) -
 A)_{|\cX}\right)^{-1} B  \\ I_{\cU} }$ maps  $\cD(\bV_{0})$ into
 $\cD(\bV_{0})$.    Furthermore, one can check that  $\sbm{ \left(
 (\bV_{{\rm H},22}(w) -
 A)_{|\cX}\right)^{-1} B  \\ I_{\cU} }$ is bounded as an operator
 from $\cU$ to $\cD(\bV_{0})$ (with $\cD(\bV_{0})$ equipped with the
 graph norm of $\bV_{{\rm H},22}(w)$).  As $D \& C$ has domain equal
 to $\cD(\bV_{0})$, we see that $D \& C$ maps $\cD(\bV_{0})$ into
 $\cU$.  Moreover, part (4) of Definition \ref{D:sysnode} assures us
 that  $D \& C$ is bounded as an operator from $\cD(\bV_{0})$
 into $\cU$.  We conclude that $f_{\bV,1}(w) = D \& C \cdot
 \sbm{ \left( (\bV_{{\rm H},22}(w) -
 A)_{|\cX}\right)^{-1} B  \\ I_{\cU} } \in \cL(\cU)$.

 \smallskip

 {\em Analysis of $f_{\bV,2}$:}  By part (2) of Proposition
 \ref{P:strucresol}, $\left( (\bV_{{\rm H},22}(w) - A)_{|\cX}\right)^{-1}
 B$ maps  $\cU$ boundedly into $\cX$.  As
 $\bV_{{\rm H},12}(w)$ is bounded as an operator from $\cX$ into $\cU$, it
 follows that $f_{\bU,2}(w) = \bV_{{\rm H},12}(w) \cdot \left(
 (\bV_{{\rm H},22}(w) - A)_{|\cX}\right)^{-1}
 B$ is bounded as an operator on $\cU$.

 \smallskip

 {\em Analysis of $f_{\bV,3}$:}  It is a consequence of part(2) of
 Proposition \ref{P:strucresol} that the operator $(\bV_{{\rm H},22}(w) - A)^{-1}$ is
 bounded from $\cX$ into $\cX_{1}$.  From the definition
 \eqref{Cdef} of $C$, we see that $C$ maps $\cX_{1}$ boundedly into
 $\cU$.  Since also $\bV_{{\rm H},21}(w)$ is bounded as an operator from
 $\cU$ to $\cX$,  it follows that $f_{\bV,3}(w) = -C \cdot (\bV_{{\rm H},22}(w) - A)^{-1}
 \cdot \bV_{{\rm H},21}(w)$ defines a bounded operator on $\cU$.

 \smallskip

 {\em Analysis of $f_{\bV,4}$:}  Note that $\bV_{{\rm H},21}(w) \in
 \cL(\cU, \cX)$,  by part (3) of Proposition \ref{P:strucresol}
 $(\bV_{{\rm H},22}(w) - A)^{-1}$ is bounded as an operator from $\cX$ into
 $\cX_{1}$ and hence as an operator from $\cX$ into itself, and
 $\bV_{{\rm H},12}(w)$ is bounded as an operator from $\cX$ into $\cU$.  It
 follows that $f_{\bV,4}(w)$, as a composition of bounded operators,
 is bounded as an operator on $\cU$.

 \smallskip

 {\em Analysis of $f_{\bV,5}$:}  This is the easiest term:
 $\bV_{{\rm H},11}(w)$ is a bounded operator on $\cU$ from the definition
 of Herglotz-Agler pencil (Definition \ref{D:HApencil}).

 \smallskip

{\em Verification of formula \eqref{Bes3}:}
We first write out $\bV(w)$ in terms of constant term and homogeneous
part:
$$
  \bV(w) = \bV_{0} + \bV_{{\rm H},k}(w) =
  \begin{bmatrix} D \& C \\ -(B \& A) \end{bmatrix} +
      \begin{bmatrix} \bV_{{\rm H},11}(w) & \bV_{{\rm H},12}(w) \\
      \bV_{{\rm H},21}(w) & \bV_{{\rm H},22}(w) \end{bmatrix}.
$$
Thus
$$
\bV_{11}(w) \& \bV_{12}(w) = D \& C + \begin{bmatrix} \bV_{{\rm
H},11}(w) & \bV_{{\rm H}, 12}(w) \end{bmatrix}.
$$
The first term maps $\cD(\bU_{0})$ boundedly into $\cU$ while the
second term is bounded from the larger space $\sbm{ \cU \\ \cX}$ into
$\cU$.  It follows that the sum indeed is bounded from $\cD(\bU_{0})$
into $\cU$, verifying property \eqref{Bes2}.  Similarly,
\begin{align*}
    \begin{bmatrix} I \\ - \bV_{22}(w)^{-1} \bV_{21}(w) \end{bmatrix} & =
    \begin{bmatrix} I \\ (\bV_{{\rm H}, 22}(w) - A)^{-1}(B -
        \bV_{{\rm H},21}(w) \end{bmatrix} \\
& = \begin{bmatrix} I \\ (\bV_{{\rm H},22}(w) - A)^{-1}B \end{bmatrix}
     - \begin{bmatrix} 0 \\ (\bV_{{\rm H},22}(w) - A)^{-1} \bV_{{\rm H},21}(w) \end{bmatrix}.
\end{align*}
The first term maps $\cU$ boundedly into $\cD(\bU_{0})$ as a
consequence of part (4) of Proposition \ref{P:strucresol} while the
second term maps $\cU$ boundedly into $\sbm{ 0 \\ \cX_{1}} \subset
\cD(\bU_{0})$ (and hence also boundedly into $\cD(\bU_{0})$) by part
(2) of Proposition \ref{P:strucresol}; this verifies property
\eqref{Bes1}.  Thus the composition in the formula \eqref{Bes3}
defines a bounded operator on $\cU$.  Working out the various pieces
in detail, we see that the result agrees with the formula for
$f_{\bV}(w)$ in \eqref{fVj}.
\end{proof}

 The following is the main result of this section.

 \begin{theorem}  \label{T:HA-Pi''}  Given a function $f \colon
     \Pi^{d} \to \cL(\cU)$, the following are equivalent:
     \begin{enumerate}
\item $f$ is in the Herglotz--Agler class $\mathcal{HA}(\Pi^{d},
\cL(\cU))$.

\item $f$ has a $\Pi^{d}$-Herglotz--Agler decomposition, i.e.,
there exist $\cL(\cU)$-valued positive kernels $K_{1}, \dots,
K_{d}$ on $\Pi^{d}$ such that
$$
  f(z)^{*} +  f(w) = \sum_{k=1}^{d} (\overline{z}_{k} + w_{k})
  K_{k}(z,w).
$$

\item There exists a Hilbert space $\cX$ and a Herglotz--Agler
pencil
$$
  \bV(w) = \bV_{0} + \bV_{\rm H}(w) =  \begin{bmatrix} D \& C \\ -(B \& A)
\end{bmatrix} + \begin{bmatrix} \bV_{{\rm H},11}(w) & \bV_{{\rm
H},12}(w) \\ \bV_{{\rm H},21}(w) &
\bV_{{\rm H},22}(w) \end{bmatrix}
$$
such that $f(w) = f_{\bV}(w)$ (with $f_{\bV}(w)$ as in \eqref{fVj}--\eqref{fVj'}
or \eqref{Bes3}).
 \end{enumerate}
 \end{theorem}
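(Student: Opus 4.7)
The plan is to establish (1) $\Leftrightarrow$ (2) by Cayley-transform reduction to the Schur--Agler case of Theorem \ref{T:SA-Pi}, prove (3) $\Rightarrow$ (2) by a direct algebraic computation exploiting the $\cB$-symmetry of the pencil, and handle (2) $\Rightarrow$ (3) by LFT reduction to Theorem \ref{T:HA-D} and Remark \ref{R:HAdisk} together with a rearrangement of the resulting resolvent representation as a Schur complement of an affine pencil. The main obstacle will be constructing the unbounded skew-adjoint constant term $\bV_0$ of the pencil so that it satisfies Definition \ref{D:HApencil}, which reduces by Proposition \ref{P:impconserv} to producing a suitable $\Pi$-impedance-conservative system node from the data extracted from the ${\mathbb D}^d$-realization.

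For (1) $\Leftrightarrow$ (2) I mimic the argument of (1) $\Leftrightarrow$ (2) in Theorem \ref{T:HA-Pi}, but the growth condition is not needed since one only uses the Cayley-transform identity
\begin{equation*}
I - s(z)^* s(w) = 2(f(z)^* + I)^{-1}\bigl(f(z)^* + f(w)\bigr)(f(w) + I)^{-1}
\end{equation*}
with $s = (f - I)(f + I)^{-1}$. This sets up a bijection between Herglotz--Agler decompositions of $f$ and Schur--Agler decompositions of $s$ via $K_k = \tfrac{1}{2}(f^* + I)\widetilde K_k (f + I)$, and the equivalence follows from (1) $\Leftrightarrow$ (2) of Theorem \ref{T:SA-Pi}. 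Alternatively, (2) $\Rightarrow$ (1) admits a direct proof via Kolmogorov decomposition and the functional calculus for strictly accretive commutative tuples, exactly as at the end of the proofs of Theorems \ref{T:HA-D}, \ref{T:SA-Pi} and \ref{T:HA-Pi}.

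For (3) $\Rightarrow$ (2) I set $H(w) = \sbm{I \\ -\bV_{22}(w)^{-1} \bV_{21}(w)}$, so that by Proposition \ref{P:HApen} one has $H(w) \in \cL(\cU, \cD(\bV_0))$ and, formally, $\bV(w)H(w) = \sbm{f_{\bV}(w) \\ 0}$. The $\cB$-symmetry $\bV_0 = -\bV_0^*$ together with $\bV_k = \bV_k^*$ for $k \ge 1$ gives the formal identity $\bV(z)^* + \bV(w) = \sum_{k=1}^{d}(\overline{z}_k + w_k)\bV_k$, so that
\begin{equation*}
f_{\bV}(z)^* + f_{\bV}(w) = H(z)^*\bigl(\bV(z)^* + \bV(w)\bigr)H(w) = \sum_{k=1}^{d}(\overline{z}_k + w_k)\,H(z)^*\bV_k H(w).
\end{equation*}
Since each $\bV_k \ge 0$, setting $K_k(z,w) = H(z)^*\bV_k H(w)$ exhibits the required Herglotz--Agler decomposition.

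For (2) $\Rightarrow$ (3) I transfer to the polydisk: $F(\zeta) := f\bigl((1+\zeta)/(1-\zeta)\bigr)$ is in $\mathcal{HA}({\mathbb D}^d, \cL(\cU))$, and by Theorem \ref{T:HA-D} together with the alternate form \eqref{DdHerreal2} of Remark \ref{R:HAdisk} one has
\begin{equation*}
F(\zeta) = R + V^*(U - P(\zeta))^{-1}(U + P(\zeta))V
\end{equation*}
with $U \in \cL(\cX)$ unitary, $V^*V = \operatorname{Re} F(0)$, $R = -R^*$ bounded, and $P(\zeta) = \sum_k \zeta_k P_k$ a spectral pencil. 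Undoing the change of variables $\zeta_k = (w_k - 1)/(w_k + 1)$ and clearing the scalar factors $(w_k + 1)^{-1}$ rewrites $f(w) - R$ as a Schur complement of an affine operator pencil in $w$, whose homogeneous coefficients $\bV_k$ ($k \ge 1$) carry the projections $P_k$ (giving the positivity and the normalization \eqref{HApen2} after an appropriate rescaling), and whose constant term $\bV_0$ is built from the Cayley transform $T = (I + U)(I - U)^{-1}$ of $U$ together with $R$ and $V$. The operator $T$ is densely defined skew-adjoint by Proposition \ref{P:unitary/skewadj}, and is in general unbounded precisely when $1$ is an eigenvalue of $U$. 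The hard part is to recognise $\bV_0$ as arising from the triple $(T, V, R)$ via the prescription of Proposition \ref{P:impconserv}, and hence to conclude that (after the block flip in Definition \ref{D:HApencil}(1)) the corresponding operator is a $\Pi$-impedance-conservative system node; the domain-splitting condition (HA2) then matches hypothesis (2) of Proposition \ref{P:impconserv}, completing the verification that $\bV(w)$ is a Herglotz--Agler pencil with $f_{\bV} = f$.
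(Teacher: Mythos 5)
Your treatment of (1)$\Leftrightarrow$(2) and of (3)$\Rightarrow$(3)$\Rightarrow$(2) (the sesquilinear-form computation using $\bV_0=-\bV_0^*$ and $\bV_k=\bV_k^*\ge 0$, with $H(w)u\in\cD(\bV_0)$ supplied by Proposition \ref{P:HApen}) matches the paper's argument. The gap is in (2)$\Rightarrow$(3), and it occurs exactly in the cases this theorem is designed to add beyond Theorem \ref{T:HA-Pi}. You propose to build $\bV_0$ from $T=(I+U)(I-U)^{-1}$, the Cayley transform of the unitary $U$ in the representation \eqref{DdHerreal2}. But when the growth condition \eqref{growth} fails --- which is precisely the new situation --- the operator $U$ necessarily has $1$ as an eigenvalue (in the paper's notation, the growth condition is equivalent to $V_1=0$, i.e.\ to $V$ having no component in the $1$-eigenspace $\cX^{(1)}$ of $U$), so $I-U$ is not injective and your $T$ does not exist as an operator. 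The paper's essential structural move, which your outline omits, is to split $\cX=\cX^{(1)}\oplus\cX^{(0)}$ with $\cX^{(1)}$ the $1$-eigenspace of $U$, take $T$ to be the Cayley transform of $U_0=U|_{\cX^{(0)}}$ only, and let the $\cX^{(1)}$-component $V_1$ of $V$ produce the block $V_1^*P_{k,11}V_1$ in $\bV_{k}$, i.e.\ the term $V_1^*P_{11}(w)V_1$ in $\bV_{{\rm H},11}(w)$ that carries the linear growth at infinity. Relatedly, your claim that $T$ ``is in general unbounded precisely when $1$ is an eigenvalue of $U$'' is incorrect: eigenvalue $1$ for $U$ is the obstruction to forming the Cayley transform at all, while unboundedness of $T$ corresponds to $1$ lying in the spectrum (but not the point spectrum) of $U_0$.

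A second, smaller misidentification: you flag ``recognising $\bV_0$ as arising from $(T,V_0,R)$ via Proposition \ref{P:impconserv}'' as the hard part, but in the paper that step is nearly immediate (the flip of $\bV_0'$ is literally the model node of Proposition \ref{P:impconserv}). The genuinely delicate point in the unbounded case is making sense of, and then verifying, $f(w)=f_{\bV}(w)$: after the change of variables the natural Schur-complement expansion contains summands such as $V_0^*TV_0$ and $V_0^*(I+T)(P_{00}(w)-T)^{-1}(I-T)V_0$ which are individually meaningless when $T$ is unbounded (an $\infty-\infty$ cancellation), and only the recombined expression $V_0^*\bigl[-T+(I+T)(P_{00}(w)-T)^{-1}(I-T)\bigr]V_0$ is a bounded operator. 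The paper handles this through the five-term interpretation of $f_{\bV}$ in Proposition \ref{P:HApen} and the identity \eqref{id1''} (proved algebraically, or by approximating unbounded $T$ by bounded operators). Your proposal would need to supply this analysis; as written, the passage from the $\mathbb{D}^d$-realization to a well-defined Herglotz--Agler pencil with $f_{\bV}=f$ is not justified.
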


 \begin{proof}

     (1) $\Leftrightarrow$ (2):  This is already done in the proof of
     Theorem \ref{T:HA-Pi}.

     \smallskip

    (1) or (2) $\Rightarrow$ (3):  We start with the representation
    for a Herglotz--Agler function $F$ over the polydisk ${\mathbb
    D}^{d}$:
    \begin{equation}   \label{DdHerreal3}
F(\zeta) = R + V^{*} (U - P(\zeta))^{-1} (U + P(\zeta)) V
    \end{equation}
    where $U$ is unitary, $P(\zeta) = \zeta_{1} P_{1} + \cdots + \zeta_{d} P_{d}$
    is a spectral decomposition of the identity on the state space
    $\cX$, $R = \frac{F(0) - F(0)^{*}}{2}$ and $V^{*}V = \frac{
    F(0) + F(0)^{*}}{2}$ as in formula \eqref{DdHerreal2}.
   We use the tuple version of the mutually inverse Cayley changes of
    variable \eqref{Cayley}:
 $$
 \zeta \in {\mathbb D}^{d} \mapsto  \frac{1+\zeta}{1-\zeta} \in
    \Pi^{d}, \quad
  w \in \Pi^{d} \mapsto \frac{w-1}{w+1} \in {\mathbb D}^{d},
$$
 with the conventions \eqref{Cayley1} and \eqref{Cayley2} in force.
 If we set $F(\zeta) = f\left( \frac{1+\zeta}{1- \zeta} \right)$, then $F$ is in
 the Herglotz--Agler class over the polydisk ${\mathbb D}^{d}$ and
 hence we can represent $F(\zeta)$ as in \eqref{DdHerreal3}.  Moreover,
 we recover $f(w)$ from $F(\zeta)$ via $f(w) = F\left(\frac{w-1}{w+1}
 \right)$.  This leads to the formula
 \begin{equation}   \label{fform}
   f(w) = R + V^{*} M(w) V
 \end{equation}
 where we set
 $$
   M(w) = \left( U - P\left( \frac{w-1}{w+1} \right) \right)^{-1}
   \left( U + P\left(\frac{w-1}{w+1} \right) \right).
 $$
 We compute further
 \begin{align}
     M(w) & = \left( U - (P(w) - I) (P(w) + I)^{-1} \right)^{-1}
     \left(U + (P(w) - I) (P(w) + I)^{-1}  \right) \notag \\
     & = \left( P(w) U + U - P(w) + I \right)^{-1} \left( P(w) U + U
     + P(w) - I \right) \notag  \\
     & = \left( P(w) (U - I) + (U + I) \right)^{-1} \left(P(w) (U + I) +
     (U - I) \right)
     \label{compute1}
 \end{align}
 Let us split out the eigenspace of $U$ for eigenvalue $z = 1$ (if
 any) by writing $U$ in the form
 $$
   U = \begin{bmatrix} I & 0 \\ 0 & U_{0} \end{bmatrix}
  $$
  with respect to the decomposition $\cX = \cX^{(1)} \oplus  \cX^{(0)}$
  ($\cX^{(1)}$ equal to the $1$-eigenspace for $U$ and $\cX^{(0)}$
  equal to the orthogonal complement of $\cX^{(1)}$ in $\cX$).
 Then $U_{0}$ is unitary but does not have $1$ as an eigenvalue.
 We have
 $$
   U - I = \begin{bmatrix} 0 & 0 \\ 0 & U_{0} - I \end{bmatrix}, \quad
   U + I = \begin{bmatrix} 2 I & 0 \\ 0 & U_{0} + I \end{bmatrix}.
 $$
 and hence
\begin{multline*}
 M(w) = \left( P(w) \left[ \begin{matrix} 0 & 0 \\ 0 & U_{0}-I
 \end{matrix} \right] + \left[ \begin{matrix} 2I & 0 \\ 0 &
 U_{0} + I \end{matrix} \right] \right)^{-1}\\
\cdot \left( P(w) \left[ \begin{matrix} 2I & 0 \\ 0 & U_{0} +I
\end{matrix} \right] + \left[ \begin{matrix} 0 & 0 \\ 0 &
U_{0}-I \end{matrix} \right] \right)
\end{multline*}
Let us set
$$
  T = (I + U_{0}) (I - U_{0})^{-1}.
$$
Then $T$ is  a possibly unbounded skew-adjoint operator on $\cX_{0}$
(by Proposition \ref{P:unitary/skewadj})
and we have the following relations between $T$ and $U_{0}$:
\begin{align*}
    U_{0} & = (T-I) (T+I)^{-1} = I - 2 (T+I)^{-1} \\
    & \Rightarrow U_{0} - I = -2(T+I)^{-1}, \quad (U_{0} - I)^{-1} =
    - \frac{1}{2} (T + I).
\end{align*}
We may then continue the computation \eqref{compute1} to get
\begin{align}
    M(w) & = \left[ \begin{matrix} \frac{1}{2} I & 0 \\ 0 &
    (U_{0} - I)^{-1} \end{matrix} \right] \left( P(w) \left[
    \begin{matrix} 0 & 0 \\ 0 & I \end{matrix} \right] +
    \left[ \begin{matrix} I & 0 \\ 0 & (U_{0} + I) (U_{0} -
    I)^{-1} \end{matrix} \right] \right)^{-1} \cdot \notag  \\
& \quad \cdot  \left( P(w) \left[ \begin{matrix} I & 0 \\ 0 &
 (U_{0}+I) (U_{0} - I)^{-1} \end{matrix} \right] + \left[
 \begin{matrix} 0 & 0 \\ 0 & I \end{matrix} \right] \right)
     \left[ \begin{matrix} 2 I & 0 \\ 0 & U_{0}-I
 \end{matrix} \right]  \notag \\
 & = \left[ \begin{matrix} I & 0 \\ 0 & -(T+I) \end{matrix}
 \right] \left( P(w) \left[ \begin{matrix} 0 & 0 \\ 0 & I
\end{matrix} \right] + \left[ \begin{matrix} I & 0 \\ 0 &
-T \end{matrix} \right] \right)^{-1} \cdot \notag \\
& \quad \cdot \left( P(w) \left[ \begin{matrix} I & 0 \\ 0 & -T
\end{matrix} \right] + \left[ \begin{matrix} 0 & 0 \\ 0 & I
\end{matrix} \right] \right) \left[ \begin{matrix} I & 0
\\ 0 & -(T + I)^{-1} \end{matrix} \right] \notag \\
& = \left[ \begin{matrix} I & 0 \\ 0 & -(T+I) \end{matrix} \right]
N(w) \left[ \begin{matrix} I & 0 \\ 0 & -(T+I)^{*}
\end{matrix} \right]
\label{M1}
 \end{align}
 where, due to the identity $-(T+I)^{*} = T - I$ arising from $T =
 -T^{*}$,  $N(w)$ is given by
 \begin{multline}
 N(w) =  \left(P(w) \left[ \begin{matrix} 0 & 0 \\ 0 & I
\end{matrix} \right] +
\left[ \begin{matrix} I & 0 \\ 0 & -T \end{matrix} \right]
\right)^{-1} \\
\cdot \left(P(w) \left[ \begin{matrix} I & 0 \\ 0 & -T
\end{matrix} \right] + \left[ \begin{matrix} 0 & 0 \\ 0 & I
\end{matrix} \right] \right) \cdot \left[ \begin{matrix} I
& 0 \\ 0 & (I - T^{2})^{-1} \end{matrix} \right].\label{N1}
\end{multline}
If we write out the block matrix decomposition
$$
P(w) = \begin{bmatrix} P_{11}(w) & P_{10}(w) \\ P_{01}(w) & P_{00}(w) \end{bmatrix}
= \sum_{k=1}^{d} w_{k} \begin{bmatrix} P_{k,11} & P_{k,10}\\ P_{k,01} & P_{k,00} \end{bmatrix}
$$
of $P(w)$ with respect to the decomposition $\cX = \left[
\begin{matrix} \cX^{(1)} \\ \cX^{(0)} \end{matrix}\right]$
    of $\cX$, we can write out more explicitly
    \begin{multline*}
    \left( P(w) \begin{bmatrix} 0 & 0 \\ 0 & I \end{bmatrix} +
    \begin{bmatrix} I & 0 \\ 0 & -T \end{bmatrix} \right)^{-1}=
        \begin{bmatrix} I & P_{10}(w) \\ 0 & P_{00}(w) - T
        \end{bmatrix}^{-1}\\ =
\begin{bmatrix} I & - P_{10}(w) (P_{00}(w) - T)^{-1} \\ 0 &
    (P_{00}(w) - T)^{-1} \end{bmatrix},
    \end{multline*}
\begin{equation*}  P(w) \begin{bmatrix} I & 0 \\ 0 & -T \end{bmatrix}  +
  \begin{bmatrix}  0 & 0 \\ 0 & I \end{bmatrix}  =
      \begin{bmatrix} P_{11}(w) & -P_{10}(w) T \\ P_{01}(w) & I -
      P_{00}(w) T \end{bmatrix}
\end{equation*}
and from \eqref{N1} we see that $N(w)$ is given by
\begin{equation}   \label{N2}
  N(w) = \begin{bmatrix} I & -P_{10}(w) (P_{00}(w) - T)^{-1} \\ 0 &
  (P_{00}(w) - T)^{-1} \end{bmatrix}
  \begin{bmatrix} P_{11}(w)  & -P_{10}(w) T (I - T^{2})^{-1}  \\
      P_{01}(w) & (I - P_{00}(w) T) (I- T^{2})^{-1} \end{bmatrix}.
\end{equation}
At this stage it is convenient to introduce the Gelfand triple (or
rigging) of $\cX^{(0)}$ associated with the (possibly unbounded)
skew-adjoint operator $T$:
\begin{align*}
    & \cX_{1}^{(0)}: = \operatorname{Dom} T = \operatorname{Ran} (I -
    T)^{-1}, \\
  & \cX^{(0)}_{-1}:= \text{completion of $\cX^{(0)}$ in
  $\cX^{(0)}_{-1}$-norm:} \ \| x\|_{-1} = \| (I - T)^{-1} x
  \|_{\cX^{(0)}} \text{ for } x \in \cX^{(0)}.
 \end{align*}
 Then we see that $(I-T)^{-1}$ is well defined as an element of $\cL(\cX^{(0)},
 \cX^{(0)}_{1})$ and of $\cL(\cX^{(0)}_{-1}, \cX^{(0)})$.
 A careful inspection of the formula \eqref{N2} for $N(w)$ shows that
 \begin{equation}  \label{Ndomran}
   N(w) \colon \begin{bmatrix} \cX^{(1)} \\ \cX^{(0)}_{-1}
\end{bmatrix} \to \begin{bmatrix} \cX^{(1)} \\ \cX^{(0)}_{1}
\end{bmatrix}
\end{equation}
from which it follows that the formula \eqref{M1} gives sense for
$M(w)$ as an element of $\cL(\cX^{(1)} \oplus \cX^{(0)}) $.
However the formula \eqref{N2} (and \eqref{N1}) lacks symmetry. To
fix this we introduce the operator $J$ by
\begin{equation}   \label{defJ}
    J: = -T(I - T^{2})^{-1},
\end{equation}
Thus $J = -J^{*}$ and we can consider $J$ as an element of
$\cL(\cX^{(0)}, \cX^{(0)}_{1})$ as well as $\cL(\cX^{(0)}_{-1},
\cX^{(0)})$.  While the operator $I - P_{00}(w) T$ makes sense as an
element of $\cL(\cX^{(0)}_{1}, \cX^{(0)})$ (as well as
$\cL(\cX^{(0)}, \cX^{(0)}_{-1})$), the individual terms in the additive
decomposition
\begin{equation}   \label{decomposition}
  I - P_{00}(w) T = (I - T^{2}) + (T^{2} - P_{22}(w) T)
\end{equation}
make sense only as elements in $\cL(\cX^{(0)}_{1}, \cX^{(0)}_{-1})$.
Nevertheless, we proceed to get a more symmetric formula for $N(w)$
as follows.  Note first that the decomposition \eqref{decomposition}
leads to
\begin{align*}
    (I - P_{00}(w) T) (I - T^{2})^{-1} & =
    I - (P_{00}(w) - T) T (I - T^{2})^{-1} \\
    & = I + (P_{00}(w) - T) J \colon \cX^{(0)} \to \cX^{(0)}.
\end{align*}
From \eqref{N2} and the definition \eqref{defJ} of $J$, we then have
\begin{align}
    N(w) & = \begin{bmatrix} I & -P_{10}(w) (P_{00}(w) - T)^{-1} \\ 0
    & (P_{00}(w) - T)^{-1} \end{bmatrix} \begin{bmatrix} P_{11}(w) &
    P_{10}(w) J \\ P_{01}(w) & I + (P_{00}(w) - T) J \end{bmatrix}
    \notag \\
    & = \begin{bmatrix} P_{11}(w) - P_{10}(w) (P_{00}(w) - T)^{-1}
    P_{01}(w) & - P_{10}(w) (P_{00}(w) - T)^{-1} \\ (P_{00}(w) -
    T)^{-1} P_{01}(w) & (P_{00}(w) - T)^{-1} + J \end{bmatrix}
    \label{N3}
\end{align}
which a priori makes sense only as an operator from $\left[
\begin{matrix} \cX^{(1)} \\ \cX^{(0)}_{-1} \end{matrix} \right]$
    to $\left[ \begin{matrix} \cX^{(1)} \\ \cX^{(0)}
\end{matrix} \right]$ rather than to  $\left[
\begin{matrix} \cX^{(1)} \\ \cX^{(0)}_{1}
\end{matrix} \right]$ as in \eqref{Ndomran}, due to the
decoupling of an $\infty - \infty$ cancellation occurring in the application of the
decomposition \eqref{decomposition}.  This in turn leads to
difficulties in understanding $M(w)$ as a bounded operator on
$\cX^{(1)} \oplus \cX^{(0)}$ from the formula \eqref{M1}.

\smallskip

\textbf{Continuation of the analysis with an extra assumption:}
{\em Assuming for the moment that $T$ is bounded} (as is the case for the
situation where the state space $\cX$ is finite-dimensional as
in the setting discussed in \cite{BK-V}), this difficulty does
not occur and we may continue as follows.  From \eqref{N3} we
see that
\begin{equation}   \label{N4}
N(w) = \begin{bmatrix} P_{11}(w) & 0 \\ 0 & J \end{bmatrix} -
\begin{bmatrix} P_{10}(w) \\ - I \end{bmatrix} (P_{00}(w) - T)^{-1}
    \begin{bmatrix} P_{01}(w) & I \end{bmatrix}.
\end{equation}
If we block-decompose the operator $V \colon \cU \to \cX = \left[
\begin{matrix} \cX^{(1)} \\ \cX^{(0)} \end{matrix} \right]$ as $V =
    \left[ \begin{matrix} V_{1} \\ V_{0} \end{matrix}
    \right]$ and then combine \eqref{fform} with \eqref{M1} and
    \eqref{N4} while noting the simplification
    \begin{align*}
    (I+T) J (I + T)^{*} &  = (I+T) [-(I+T)^{-1} T (I-T)^{-1}]
    (I+T)^{*} \text{ (by \eqref{defJ})}\\
    & = -T  \text{ (since $T^{*} = -T$)}.
    \end{align*}
 we arrive at
\begin{align}
  &  f(w)  = R + V_{1}^{*} P_{11}(w) V_{1} - V_{0}^{*} T V_{0} \notag \\
    & -[ V_{1}^{*}P_{10}(w) + V_{0}^{*}(I + T) ] (P_{00}(w) - T)^{-1}
    [P_{01}(w) V_{1} - (I + T)^{*} V_{0}].
  \label{fform'}
    \end{align}
    We have arrived at a Bessmertny\u{\i} long-resolvent representation
    for $f$
    $$ f(w) = f_{\bV}(w)
    $$
    where the operator pencil $\bV(w) = \left[ \begin{matrix}
    \bV_{11}(w) & \bV_{12}(w) \\ \bV_{21}(w) & \bV_{22}(w) \end{matrix}
    \right]$ is given by
    \begin{align*}
    \bV_{11}(w) & = R + V_{0}^{*} (I + T) J (I + T)^{*} V_{0} +
    V_{1}^{*} P_{11}(w) V_{1}, \\
    \bV_{12}(w) & = V_{0}^{*} (I + T) + V_{1}^{*} P_{10}(w), \\
    \bV_{21}(w) & = -(I + T)^{*} V_{0} + P_{01}(w) V_{1}, \\
    \bV_{22}(w) & = -T + P_{00}(w).
\end{align*}
Thus the associated linear pencil
\begin{equation}    \label{A(w)}
  \bV(w) = \begin{bmatrix} \bV_{11}(w) & \bV_{12}(w) \\ \bV_{21}(w) &
  \bV_{22}(w) \end{bmatrix} = \bV_{0} + w_{1} \bV_{1} + \cdots +
  w_{d} \bV_{d}
\end{equation}
has coefficients
\begin{align}
    \bV_{0}& = \begin{bmatrix} R - V_{0}^{*}T V_{0} & V_{0}^{*} (I + T)
    \\ -(I + T)^{*} V_{0} & -T \end{bmatrix}  \notag \\
    & =
    \begin{bmatrix} R & 0 \\ 0 & 0 \end{bmatrix} +
    \begin{bmatrix} V_{0}^{*} & 0 \\ 0 & I \end{bmatrix}
        \begin{bmatrix} -T & I+T \\ -(I+T)^{*} & -T \end{bmatrix}
        \begin{bmatrix} V_{0} & 0 \\ 0 & I \end{bmatrix},
    \label{A0} \\
    \bV_{k} & = \begin{bmatrix} V_{1}^{*} P_{k,11} V_{1} & V_{1}^{*}
    P_{k,10} \\ P_{k,01} V_{1} & P_{k,00} \end{bmatrix} \text{ for }
    k=1, \dots, d.  \label{Ak}
\end{align}
Moreover, it is easily checked that $\bV_{0}$ is skew-adjoint and
that $P_{k} \ge 0$ for each $k$ with $\sum_{k=1}^{d} P_{k,00} =
I_{\cX^{(0)}}$ (since $\sum_{k=1}^{d} P_{k} = I_{\cX}$), and hence
$\bV(w)$ is a Herglotz--Agler pencil and Theorem \ref{T:HA-Pi''} is
completely proved in case $T = (I + U_{0}) (I - U_{0})^{-1}$ is
bounded on $\cX^{(0)}$.

\smallskip

\textbf{Back to the general case:}
For the general case (where $T$ is allowed to be unbounded), the
formula \eqref{Ak} for $\bV_{k}$ still makes good sense and the
$\bV_{k}$'s meet property (2) in Definition \ref{D:HApencil}. The next
step is to  make sense of the formula \eqref{A0} for $\bV_{0}$.

The first term in the formula \eqref{A0} for $\bV_{0}$ can always be
added in later so we focus on the second term $\bV_{0}'$:
\begin{equation*}
    \bV_{0}' = \begin{bmatrix} V_{0}^{*}  & 0 \\ 0 & I\end{bmatrix}
    \begin{bmatrix} -T & I+T \\ -(I+T)^{*} & -T \end{bmatrix}
    \begin{bmatrix} V_{0} & 0 \\ 0 & I \end{bmatrix}.
\end{equation*}
We view $\bV_{0}'$ as a possibly unbounded operator with dense
domain in $\left[ \begin{matrix} \cU \\ \cX \end{matrix} \right]$
given by
$$
 \cD(\bV_{0}') = \left\{ \begin{bmatrix} u \\ x \end{bmatrix} \in
 \begin{bmatrix} \cU \\ \cX \end{bmatrix} \colon x - V_{0} u \in
     \cD(T) \right\}.
$$Then the flip of $\bV_{0}'$, namely
$$
 \begin{bmatrix} 0 & -I \\ I & 0 \end{bmatrix} \bV_{0}'
     \begin{bmatrix} 0 & I \\ I & 0 \end{bmatrix}
 = \begin{bmatrix} I  & 0 \\ 0 &  V_{0}^{*}\end{bmatrix}
 \begin{bmatrix} T & I-T \\ I+T & -T \end{bmatrix}
     \begin{bmatrix} I & 0 \\ 0 & V_{0} \end{bmatrix}
$$
has exactly the form of the model $\Pi$-impedance-conservative
system node given in Proposition \ref{P:impconserv}. We can now
conclude that $\bV(w)$ given by \eqref{A(w)}, \eqref{A0}, \eqref{Ak}
is indeed a Herglotz--Agler pencil.  It remains only to verify
that $f(w) = f_{\bV}(w)$.

From the formula \eqref{fform} for $f$ combined with the formula
\eqref{M1} for $M(w)$ and the formula \eqref{N3} for $N(w)$, we know
that $f(w)$ has the representation
\begin{align}
f(w) & = R + V_{1}^{*}P_{11}(w) V_{1} - V_{1}^{*}P_{10}(w)
 (P_{00}(w) - T)^{-1} P_{01}(w) V_{1} \notag  \\
& \quad -V_{0}^{*} (I+T)(P_{00}(w) - T)^{-1} P_{01}(w) V_{1}
 \notag \\
& \quad + V_{1}^{*}P_{10}(w) [ T (I+T)^{-1}
+ (P_{00}(w) -T)^{-1}(I - P_{00}(w)T) (I+T)^{-1}] V_{0}  \notag \\
& \quad +V_{0}^{*}(I+T) (P_{00}(w) - T)^{-1} (I - P_{00}(w) T) (I+T)^{-1} V_{0}.
    \label{fform''}
\end{align}
On the other hand the Bessmertny\u{\i} transfer function
associated with the pencil $\bV$ \eqref{A(w)} can be written as
\begin{align}
    f_{\bV}(w) & = [\bV_{11}(w) \& \bV_{12}(w) ] \begin{bmatrix} I
    \\ - \bV_{22}(w)^{-1} \bV_{21}(w) \end{bmatrix} \notag \\
    & =
     R + V_{0}^{*}\left[ -T + (I+T) (P_{00}(w) -
    T)^{-1} (I-T) \right]V_{0}  \notag \\
    & \quad  + V_{1}^{*} P_{11}(w) V_{1} - V_{1}^{*} P_{10}(w)
    (P_{00}(w) - T)^{-1} P_{01}(w) V_{1} \notag \\
    & \quad - V_{0}^{*}(I+T) (P_{00}(w) - T)^{-1} P_{01}(w) V_{1}
    \notag \\
    &\quad  + V_{1}^{*} P_{10}(w) (P_{00}(w) - T)^{-1} (I-T)V_{0}.
    \label{fA}
\end{align}
Note that care must be taken in writing the first term of the
expression after the $R$ term:  the individual expressions
$-V_{0}^{*} T V_{0}$ and $V_{0}^{*} (I + T) (P_{00}(w) -
T)^{-1}(I-T) V_{0}$ make no sense since the operators $-T$ and $(I
+ T) (P_{00}(w) - T)^{-1}(I-T)$ map   $\cX^{(0)}$ into
$\cX^{(0)}_{-1}$ and $V_{0}^{*} \in \cL(\cX^{(0)}, \cU)$ has no
extension to $\cX^{(0)}_{-1}$; as we shall see in detail below,
the combination $-T + (I + T) (P_{00}(w) - T)^{-1}(I-T)$
fortuitously maps $\cX^{(0)}$ back into itself so that the
combined term $V_{0}^{*}(-T + (I + T) (P_{00}(w) -
T)^{-1}(I-T))V_{0}$ makes good sense as a bounded operator on
$\cU$ for each $w \in \Pi^{d}$; roughly speaking, this is where we
couple back together the $\infty - \infty$ cancellation introduced
earlier to make our formulas once again make sense.  Note also
that the formula \eqref{fA} agrees with the formula \eqref{fform'}
once one takes care to rearrange the terms in \eqref{fform'} so
that the result makes sense as a well-defined bounded operator on
$\cU$ defining the operator $f(w)$.

By the analysis done above with the extra assumption imposed, we
see that the two expressions \eqref{fform''} and \eqref{fA} agree
in the special case where the skew-adjoint operator $T$ is
bounded. Once the operator $-T + (I+T) (P_{00}(w) - T)^{-1}
P_{01}(w) (I-T)$ is exhibited more explicitly as a bounded
operator on $\cX^{(0)}$ (even in the case where $T$ itself is
unbounded), it is possible to verify the equality of the two
expressions \eqref{fform''} and \eqref{fA} by approximating the
unbounded case by the bounded case and then taking limits.  As
this is really about algebra, however, perhaps more satisfying is
to verify the equality between \eqref{fform''} and \eqref{fA}
directly by brute-force algebra.

Toward this goal, we note that each term in \eqref{fform''} can be
paired with an identical term in \eqref{fA} once we establish the
validity of the two identities:
\begin{align}
 & (I+T)(P_{00}(w) - T)^{-1} (I - P_{00}(w) T) (I + T)^{-1}  \notag
 \\  & \quad =
 -T + (I+T) (P_{00}(w) - T)^{-1} (I-T)  \label{id1''}  \\
 & P_{10}(w)\,[T(I+T)^{-1} + (P_{00}(w) - T)^{-1} (I - P_{00}(w) T) (I
 + T)^{-1}] \notag  \\
 & \quad  = P_{10}(w)(P_{00}(w) - T)^{-1} (I-T).
\notag
\end{align}
In particular, \eqref{id1''} demonstrates how the expression $-T +
(I+T) (P_{00}(w) - T)^{-1} (I-T)$ actually defines a bounded operator
on $\cX$.  These two identities can be verified directly by
brute-force algebra; we leave the details to the reader (or as an
exercise for {\tt MATHEMATICA}).  This concludes the proof of (1) or
(2) $\Rightarrow$ (3) in Theorem \ref{T:HA-Pi''}.

     \smallskip

     (3) $\Rightarrow$ (2):  We assume that $f(w)  = f_{\bV}(w)$ for a
     Herglotz--Agler pencil
     $$
     \bV(w) = \begin{bmatrix} \bV_{11}(w) \& \bV_{12}(w) \\ \bV_{21}(w) \&
     \bV_{22}(w) \end{bmatrix} =
     \begin{bmatrix} D \& C \\ - (B \& A) \end{bmatrix} +
     \begin{bmatrix} \bV_{{\rm H},11}(w) & \bV_{{\rm H},12}(w) \\ \bV_{{\rm H},21}(w) &
         \bV_{{\rm H},22}(w) \end{bmatrix}.
$$
Thus, as explained in Proposition \ref{P:HApen}, if we set $x_{w} = - \bV_{22}(w)^{-1} \bV_{21}(w)$
for $w \in \Pi^{d}$, then for each $u \in \cU$ we have
$$
 \begin{bmatrix} u \\ x_{w}u \end{bmatrix} \in \cD\left(
     \begin{bmatrix} \bV_{11}(w) \& \bV_{12}(w) \\ \bV_{21}(w) \& \bV_{22}(w)
     \end{bmatrix} \right)
$$
and
\begin{equation}   \label{note1}
    f(w) u = \left( \bV_{11}(w) \& \bV_{12}(w) \right) \begin{bmatrix} u
    \\ x_{w} u \end{bmatrix}
 \end{equation}
 We may also compute
 \begin{align*}
& \left( \bV_{21}(w) \& \bV_{22}(w) \right) \begin{bmatrix} u \\
 -\bV_{22}(w)^{-1} \bV_{21}(w) u \end{bmatrix}  \\
 &  =
  \begin{bmatrix}  \bV_{21}(w) & \bV_{22}(w)
\end{bmatrix}  \begin{bmatrix} u \\
 -\bV_{22}(w)^{-1} \bV_{21}(w) u \end{bmatrix}
 \text{ (as a vector in $\cX^{(0)}_{-1}$)}  \\
 & = \bV_{21}(w) u - \bV_{21}(w) u = 0.
 \end{align*}
 Thus \eqref{note1} can be expanded to the identity
 \begin{equation}  \label{note2}
     \begin{bmatrix} \bV_{11}(w) \& \bV_{12}(w) \\ \bV_{21}(w) \& \bV_{22}(w)
     \end{bmatrix} \begin{bmatrix} u \\ x_{w} u \end{bmatrix} =
     \begin{bmatrix} f(w) \\ 0 \end{bmatrix} u.
\end{equation}
In addition to $u \in \cU$ and $w \in \Pi^{d}$, choose another pair
$u' \in \cU$ and $z \in \Pi^{d}$ and consider the sesquilinear form
\begin{align}
& {\mathbb Q}(z,w)[u,u'] : =   \notag   \\
& \quad \left\langle \begin{bmatrix} \bV_{11}(w) \& \bV_{12}(w) \\ \bV_{21}(w)
    \& \bV_{22}(w) \end{bmatrix} \begin{bmatrix} u \\ x_{w} u
\end{bmatrix}, \, \begin{bmatrix} u' \\ x_{z} u' \end{bmatrix}
\right\rangle + \left\langle \begin{bmatrix} u \\
x_{w} u \end{bmatrix}, \, \begin{bmatrix} \bV_{11}(z) \& \bV_{12}(z) \\
\bV_{21}(z) \& \bV_{22}(z) \end{bmatrix} \begin{bmatrix} u' \\ x_{z} u'
\end{bmatrix} \right \rangle.
 \notag
\end{align}
As a consequence of \eqref{note2} we see that
\begin{align}
    {\mathbb Q}(z,w)[u,u'] &  = \left\langle \begin{bmatrix} f(w) u \\ 0
    \end{bmatrix}, \begin{bmatrix} u' \\ x_{z} u' \end{bmatrix}
    \right \rangle + \left\langle \begin{bmatrix} u \\ x_{w} u
\end{bmatrix}, \, \begin{bmatrix} f(z) u' \\ 0 \end{bmatrix} \right
\rangle  \notag \\
& = \langle \left( f(z)^{*} + f(w) \right) u, u' \rangle.
\label{note3}
\end{align}
On the other hand, from the decomposition of $\bV(w)$ as $\bV(w) =
\bV_{0} + \bV_{\rm H}(w)$ with $\bV_{0} = - \bV_{0}^{*}$, we have
\begin{multline}
    {\mathbb Q}(z,w)[u,u']  =
\Big \langle  \bV(w)
\begin{bmatrix} u \\ x_{w} u \end{bmatrix}, \,
\begin{bmatrix} u' \\ x_{z} u' \end{bmatrix} \Big \rangle +
    \Big \langle \begin{bmatrix} u \\ x_{w} u \end{bmatrix}, \,
    \bV(z) \begin{bmatrix} u' \\ x_{z} u'
\end{bmatrix} \Big \rangle   \\
 = \Big \langle \left( \bV_{\rm H}(z)^{*} + \bV_{\rm H}(w) \right) \begin{bmatrix} I \\
- \bV_{22}(w)^{-1} \bV_{21}(w) \end{bmatrix} u, \,
\begin{bmatrix} I \\ - \bV_{22}(z)^{-1} \bV_{21}(z) \end{bmatrix} u'
    \Big \rangle  \\
  = \sum_{j=1}^{d} (\overline{z}_{j} + w_{j}) \langle H(z)^{*}
    \bV_{j} H(w) u, u' \rangle
    \label{note4}
 \end{multline}
 where we set
 $H(w) = \begin{bmatrix} I \\ -\bV_{22}(w)^{-1} \bV_{21}(w) \end{bmatrix}
 \in \cL(\cU, \cU \oplus \cX)$.  Combining \eqref{note3} and
 \eqref{note4} gives us
 $$
   f(z)^{*} + f(w) = \sum_{j=1}^{d} (\overline{z}_{j} + w_{j})
   H(z)^{*} \bV_{j} H(w)
 $$
 where $\bV_{j} \ge 0$ on $\cU \oplus \cX$ by assumption, and (2)
 follows.
 \end{proof}

 We next illustrate Theorem \ref{T:HA-Pi''} by looking at some
 special cases.

 \smallskip

 \noindent
 \textbf{Special case 1:  $V_{1} = 0$.}  We note that the case
 $V_{1}=0$ in the proof of Theorem \ref{T:HA-Pi''} is exactly the
 case where the representation \eqref{fA} for $f$ collapses to
 \begin{equation}  \label{fAproper}
 f(w) = R + V_{0}^{*}[-T + (I+T) (P_{00}(w) - T)^{-1} (I-T)]
 V_{0}.
 \end{equation}
 where $P_{00}(w) = w_{1} P_{1,00} + \cdots + w_{d} P_{d,00}$ is a
 positive decomposition of $I_{\cX^{(0)}}$, i.e., $f$ has
 a representation exactly as in part (3) of Theorem \ref{T:HA-Pi}.
 In general, from the property \eqref{HApen2} for a Herglotz--Agler
 pencil, we have
 $$
   \bV_{\rm H}(t \be) = t \bV_{\rm H}(\be) = t \begin{bmatrix} \bV_{{\rm H},11}(\be) & 0 \\ 0 & I
\end{bmatrix}.
$$
Thus, in the general representation \eqref{fA} for $f$, $f(t \be)$
takes on the simplified form
$$
f(t \be) = R + V_{0}^{*}[ -T + (I+T) (tI - T)^{-1} (I-T) ] V_{0}
 + t V_{1}^{*} V_{1}.
$$
We have already seen that
$$
\lim_{t \to +\infty} \frac{1}{t} [ R + V_{0}^{*}(-T + (I+T)
(tI -T)^{-1} (I-T))V_0 ] = 0.
$$
We conclude that in general
$$
\lim_{t \to +\infty} \frac{1}{t} f(t \be) =  V_{1}^{*} V_{1}.
$$
Thus the growth condition at $\infty$ \eqref{growth} is equivalent
to the condition that $V_{1} = 0$.  In this way we arrive at Theorem
\ref{T:HA-Pi} as a corollary of Theorem \ref{T:HA-Pi''}.

\smallskip

\noindent \textbf{Special case 2:  $\cX^{(1)} = \{0\}$.}   This
corresponds to the case   where $P_{k} = P_{k,00}$ and $P_{00}(w)
= w_{1} P_{1,00} + \cdots + w_{d} P_{d,00}$ is a spectral
decomposition (not just a positive decomposition) of
$I_{\cX^{(0)}}$.  Then the representation \eqref{fA} collapses
again to \eqref{fAproper}, but this time with the stronger
property that $w_{1} P_{1,00} + \cdots + P_{d,00} w_{d}$ is a
spectral decomposition of $I_{\cX^{(0)}}$, i.e., exactly the
conclusion of part (1) of Theorem \ref{T:HA-Pi'}.  On the one
hand, the condition $\cX^{(1)} = \{0\}$ means that the unitary
operator $U$ in the Herglotz representation \eqref{DdHerreal2} for
the function $F \in \mathcal{HA}({\mathbb D}^{d}, \cU)$ given by
$$
  F(\zeta) = f\left(\frac{1+\zeta}{1- \zeta} \right)
$$
does not have $1$ as an eigenvalue.  On the other hand, condition
(1) in Theorem \ref{T:HA-Pi''} is that the colligation matrix $\bU
= \left[ \begin{matrix} \bA & \bB \\ \bC & \bD
\end{matrix} \right]$ in the unitary Givone--Roesser
representation
\begin{equation}   \label{GR-S}
  S(\zeta) = \bD + \bC (I - P(\zeta) \bA)^{-1} P(\zeta) \bB
\end{equation}
for the $\mathcal{SA}({\mathbb D}^{d}, \cU)$-class function
\begin{equation}  \label{SfromForf}
  S(\zeta) = \left( f\left( \frac{1+\zeta}{1- \zeta} \right) - I
  \right)
  \left( f\left( \frac{1+\zeta}{1-\zeta} \right) + I \right)^{-1}
 = (F(\zeta) - I) (F(\zeta) + I)^{-1}
\end{equation}
does not have $1$ as an eigenvalue.  To see that these conditions
match up, we recall from the discussion in Remark \ref{R:sysnodes}
(note formulas \eqref{DdHerreal2'}, \eqref{Agler-rep} and
\eqref{DdHerreal2}) that we have the following connection between
the unitary operator $U$ in the Herglotz representation
\eqref{DdHerreal2} for $F(\zeta)$  and the unitary colligation matrix
$\bU = \left[ \begin{matrix} \bA & \bB \\ \bC & \bD
\end{matrix} \right]$ generating the  Givone--Roesser
representation \eqref{GR-S} for $S(\zeta)$ given by \eqref{SfromForf}:
$$
   U^{*} = \bU_{0}: = \bA - \bB(I - \bD)^{-1} \bC.
$$
As was observed in Remark \ref{R:HAdisk},  $I - \bD$ is injective
since $I-\bD = I - S(0) = 2 (F(0) + I)^{-1}$.  By the theory of Schur
complements, given that $I- \bD$ is injective, then  $I- \bU$ is
injective if and only if its Schur complement $\bU_{0} = I - \bB (I -
\bD)^{-1} \bC$ is injective.  As $\bU_{0}$ is unitary, this in turn
is equivalent to $U = (\bU_{0})^{*}$ not having $1$ as an
eigenvalue.  In this way we recover Theorem \ref{T:HA-Pi'} as a
corollary of Theorem \ref{T:HA-Pi''}.

\smallskip
\noindent \textbf{Special case 3: $X^{(0)}=\{0\}$.} In this case,
$M(w)=P(w)$, and the representation \eqref{fA} collapses to
$$f(w)=\bV(w)=R+V^*P(w)V.$$

 \smallskip

 \noindent
 \textbf{Special case 4: $R=0$, $U=U^*$, $V_0=0$.} In this
 case, $T=T^*=-T^*$, hence $T=0$, and the linear pencil $A(w)$ is
 homogeneous: $$\bV(w)=\bV_{\rm H}(w)=\begin{bmatrix}
 V_1^* & 0\\
 0 & I
 \end{bmatrix}P(w)\begin{bmatrix}
 V_1 & 0\\
 0 & I
 \end{bmatrix}.$$
 Moreover, $\bV(\mathbf{e})=\begin{bmatrix}
 V_1^*V_1 & 0\\
 0 & I
 \end{bmatrix}$, as in \eqref{HApen2}.
 The representation
\begin{align}
f(w) &=V_1^*(P_{11}(w)-P_{10}(w)P_{00}(w)^{-1}P_{01}(w))V_1 \label{E:Bessm1}\\
&=\bV_{{\rm H},11}(w)-\bV_{{\rm H},12}(w) \bV_{{\rm H},22}(w)^{-1} \bV_{{\rm
H},21}\label{E:Bessm2}
\end{align}
is then Bessmertny\u{\i}'s long-resolvent representation in the
infinite-dimensional setting, as in \cite{K-V}, i.e., $f$ belongs
to the \emph{Bessmertny\u{\i} class} $\mathcal{B}_d(\mathcal{U})$.

\begin{remark}\label{rem:pencil-normal}
\emph{Notice that for a function $f\in\mathcal{B}_d(\mathcal{U})$
one can always find a homogeneous Bessmertny\u{\i} pencil $\bV_{\rm
H}(w)$ satisfying the condition \eqref{HApen2}. First of all,
$\bV_{{\rm H},22}(\be)$ must be invertible. Indeed, if $\ker \bV_{{\rm
H},22}(\be)\neq\{0\}$, then the maximum principle and positivity
of the coefficients $\bV_k$, $k=1,\ldots,d$, force $\bV_{\rm H}(w)$ to
have the form
$$\bV_{\rm H}(w)=\begin{bmatrix}
\bV_{{\rm H},11}(w) & \widetilde{\bV}_{{\rm H},12}(w) & 0\\
\widetilde{\bV}_{{\rm H},21}(w) & \widetilde{\bV}_{{\rm H},22}(w) &
0\\
0 & 0 & 0
\end{bmatrix}\colon\begin{bmatrix}
\mathcal{U}\\
 \overline{\rm Ran}\, {\bV}_{{\rm H},22}(\be)\\
{\rm Ker}\, \bV_{{\rm H},22}(\be)
\end{bmatrix}\to\begin{bmatrix}
\mathcal{U}\\
 \overline{\rm Ran} \, {\bV}_{{\rm H},22}(\be)\\
{\rm Ker}\, \bV_{{\rm H},22}(\be)
\end{bmatrix},$$
in contradiction with condition \eqref{HApen2} for a Herglotz-Agler
pencil. Therefore, one can replace the pencil $\bV_{\rm
H}(w)$
by the pencil $$\widehat{\bV}_{\rm H}(w)=\begin{bmatrix} I & 0\\
0 & \bV_{{\rm H},22}(\be)^{-1}
\end{bmatrix} \bV_{\rm H}(w)\begin{bmatrix} I & 0\\
0 & \bV_{{\rm H},22}(\be)^{-1}
\end{bmatrix},$$
which satisfies the condition \eqref{HApen2} and provides another
Bessmertny\u{\i} transfer-function realization of $f$:
$$f(w)=\widehat{\bV}_{{\rm H},11}(w)-\widehat{\bV}_{{\rm
H},12}(w)\widehat{\bV}_{{\rm H},22}(w)^{-1}\widehat{\bV}_{{\rm
H},21}.$$ }
\end{remark}

In fact, the special case described in Remark \ref{rem:pencil-normal} covers the whole
class $\mathcal{B}_d(\mathcal{U})$.
In the following theorem, we collect the characterizations of this
class from \cite{K-V}, together with the two additional
characterizations: via its image in the class
$\mathcal{HA}(\mathbb{D}^d,\mathcal{L(U)})$ under the Cayley
transform over the variables, and as this special case of Theorem
\ref{T:HA-Pi''}.
\begin{theorem}[]\label{T:Bessm}
  Given a function $f \colon
     \Pi^{d} \to \cL(\cU)$, the following are equivalent:
     \begin{enumerate}
\item $f$ is in the Herglotz--Agler class $\mathcal{HA}(\Pi^{d},
\cL(\cU))$ and can be extended to a
    holomorphic $\mathcal{L(U)}$-valued
    function on
    $\Omega_d:=\bigcup\limits_{\lambda\in\mathbb{T}}(\lambda\Pi)^d$
    which satisfies the following conditions: \begin{itemize}
    \item[(a)] \emph{Homogeneity:}  $f(\lambda
    w_1,\ldots,\lambda w_2)=\lambda f(w_1,\ldots,w_d)$ for every
    $\lambda\in\mathbb{C}\setminus\{0\}$ and
    $w=(w_1,\ldots,w_d)\in\Omega_d$.
    \item[(b)] \emph{Real Symmetry:}
    $f(\overline{w}_1,\ldots,\overline{w}_d)=f(w_1,\ldots,w_d)^*$  for every
    $w\in\Omega_d$.
\end{itemize}

\item $f$ has a $\Pi^{d}$-Bessmertny\u{\i} decomposition, i.e., there
exist $\cL(\cU)$-valued positive kernels $K_{1}, \dots, K_{d}$ on
$\Pi^{d}$ such that the identity
$$
 f(w) = \sum_{k=1}^{d} w_{k} K_{k}(z,w),\quad z,w\in\Pi^d,
$$
holds, or equivalently, the following two identities hold:
$$
 f(z)^*\pm f(w) = \sum_{k=1}^{d} (\overline{z}_k\pm w_{k}) K_{k}(z,w),\quad
 z,w\in\Pi^d.
$$

\item $f$ belongs to the Bessmertny\u{\i} class
$\mathcal{B}_d(\mathcal{U})$, i.e., there exist a Hilbert space
$\cX$ and a Bessmertny\u{\i} pencil
$$
  \bV(w) = \bV_{\rm H}(w) = \begin{bmatrix}
\bV_{{\rm H},11}(w) & \bV_{{\rm H},12}(w)\\
\bV_{{\rm H},21}(w) & \bV_{{\rm H},22}(w)
\end{bmatrix}=
  \sum_{k=1}^dw_k \bV_k,
$$
where $\bV_k\in\mathcal{L}(\mathcal{U}\oplus\mathcal{X})$ are
positive semidefinite operators,  so that \eqref{HApen2} holds and
$f(w) = f_{\bV}(w)$ (with $f_{\bV}(w)$ as in \eqref{E:Bessm2}).

\item The double Cayley transform of $f$,
$$S(\zeta)=\mathcal{C}(f):=\Big[f\Big(\frac{1+\zeta}{1-\zeta}\Big)-I\Big]
\Big[f\Big(\frac{1+\zeta}{1-\zeta}\Big)+I\Big]^{-1},$$ belongs to
the Schur--Agler class $\mathcal{SA}(\mathbb{D}^d,\mathcal{L(U)})$
and has a unitary Givone--Roesser realization \eqref{SAreal} with
a unitary and self-adjoint colligation matrix
$\bU=\bU^*=\bU^{-1}$.

\item The Cayley transform of $f$ over the variables,
$$
F(\zeta)=f\Big(\frac{1+\zeta}{1-\zeta}\Big),
$$
belongs to the
Herglotz--Agler class $\mathcal{HA}(\mathbb{D}^d,\mathcal{L(U)})$
and has a representation \eqref{DdHerreal3} with $R=0$,
$U=U^{-1}=U^*$, and ${\rm Ran}\,V$ contained in the 1-eigenspace of
$U$.

\item There exist a Hilbert space $\mathcal{X}$,  its subspaces
$\cX^{(0)}$, $\cX^{(1)}$ with $\cX=\cX^{(1)}\oplus\cX^{(0)}$, a
decomposition of $I_{\cX}$,
$P(w)=w_1P_1+\cdots+w_dP_d=\begin{bmatrix} P_{11}(w) &
P_{10}(w)\\
P_{01}(w) & P_{00}(w) \end{bmatrix}$ (with respect to the two-fold
decomposition of $\cX$), and $V_1\in\mathcal{L}(\cU,\cX^{(1)})$,
such that $f$ has the form \eqref{E:Bessm1}.
 \end{enumerate}
\end{theorem}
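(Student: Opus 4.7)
The plan is to treat the equivalences (1) $\Leftrightarrow$ (2) $\Leftrightarrow$ (3) $\Leftrightarrow$ (4) as established in \cite{K-V}, and incorporate (5) and (6) into the chain by proving (3) $\Leftrightarrow$ (6) and (5) $\Leftrightarrow$ (6). The key observation is that (6) is precisely the pencil data corresponding to Special Case 4 appearing in the proof of Theorem \ref{T:HA-Pi''}, and (5) is the companion Cayley-transform data.

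For (6) $\Rightarrow$ (3), given the data $(\cX = \cX^{(1)} \oplus \cX^{(0)}, P_1,\dots,P_d, V_1)$ as in (6), I would set
\begin{equation*}
\bV(w) := \begin{bmatrix} V_1^* & 0 \\ 0 & I \end{bmatrix} P(w) \begin{bmatrix} V_1 & 0 \\ 0 & I \end{bmatrix},
\end{equation*}
observe that each coefficient $\bV_k = \bigl[\begin{smallmatrix} V_1^* & 0 \\ 0 & I \end{smallmatrix}\bigr] P_k \bigl[\begin{smallmatrix} V_1 & 0 \\ 0 & I \end{smallmatrix}\bigr] \ge 0$ and that $\bV(\be) = \bigl[\begin{smallmatrix} V_1^*V_1 & 0 \\ 0 & I \end{smallmatrix}\bigr]$ satisfies the normalization \eqref{HApen2}, and then read off the Schur complement $f_{\bV}(w) = V_1^*(P_{11}(w) - P_{10}(w)P_{00}(w)^{-1}P_{01}(w))V_1$ to recover \eqref{E:Bessm1}. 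For (3) $\Rightarrow$ (6), starting from an arbitrary Bessmertny\u{\i} pencil I would apply Remark \ref{rem:pencil-normal} to pass to a similar pencil $\widehat{\bV}_{\rm H}$ satisfying $\widehat{\bV}_{{\rm H},22}(\be)=I$, factor the $(1,1)$-block $\widehat{\bV}_{{\rm H},11}(\be) = V_1^* V_1$, and read the positive semidefinite block coefficients of $\widehat{\bV}_{\rm H}(w)$ as a single positive decomposition of $I_\cX$ on $\cX = \cX^{(1)} \oplus \cX^{(0)}$ in the form required by (6).

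For (5) $\Rightarrow$ (6), the argument is already contained in Special Case 4 of the proof of Theorem \ref{T:HA-Pi''}: the hypotheses $R=0$, $U=U^{-1}=U^*$ force $U_0=-I$ on the orthogonal complement of the 1-eigenspace $\cX^{(1)}$ (since $U_0$ is self-adjoint unitary without $1$ in its spectrum), whence $T = (I+U_0)(I-U_0)^{-1} = 0$; similarly, $\operatorname{Ran} V \subset \cX^{(1)}$ gives $V_0 = 0$. With these choices the general pencil representation \eqref{fA} collapses to \eqref{E:Bessm1}, as noted in Special Case 4. For (6) $\Rightarrow$ (5), I would reverse-engineer the same special case: with $(\cX^{(1)}, \cX^{(0)}, P_k, V_1)$ as in (6), define $U = I_{\cX^{(1)}} \oplus (-I_{\cX^{(0)}})$ on $\cX$ (self-adjoint unitary with 1-eigenspace $\cX^{(1)}$), $V = V_1 \oplus 0$, $R=0$, and verify the identity
\begin{equation*}
F(\zeta) := f\bigl(\tfrac{1+\zeta}{1-\zeta}\bigr) = V^*(U - P(\zeta))^{-1}(U + P(\zeta))V
\end{equation*}
by direct computation; this reduces to the algebraic manipulations in the proof of Theorem \ref{T:HA-Pi''} with $T=0$, $V_0=0$, $R=0$ read in the opposite direction.

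The main obstacle is bookkeeping: one must keep the two sides of the Cayley transform aligned (pencil data on the polyhalfplane side versus \eqref{DdHerreal3}-data on the polydisk side) and, in (3) $\Rightarrow$ (6), carry out the normalization of Remark \ref{rem:pencil-normal} rigorously, in particular checking that $\bV_{{\rm H},22}(\be)$ is indeed invertible (which the Remark handles via the maximum principle). Beyond this bookkeeping the proof is essentially a repackaging of Special Case 4 of Theorem \ref{T:HA-Pi''} together with the rescaling argument from Remark \ref{rem:pencil-normal}; no new analytic input is needed.
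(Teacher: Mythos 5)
Your architecture differs from the paper's: the paper closes the cycle via (4)$\Rightarrow$(5)$\Rightarrow$(6)$\Rightarrow$(3) (with (1)--(4) imported from \cite{K-V}), whereas you propose the direct implications (3)$\Rightarrow$(6) and (6)$\Rightarrow$(5); your (5)$\Rightarrow$(6) and (6)$\Rightarrow$(3) coincide with the paper's. The difficulty is that your two new links do not splice together, because they require incompatible readings of the decomposition $(P_1,\dots,P_d)$ in (6). In (3)$\Rightarrow$(6) the coefficients $\bV_k$ sum to $\sbm{ V_1^*V_1 & 0 \\ 0 & I }$, not to the identity, so they cannot simply be ``read off'' as a decomposition of $I_{\cX^{(1)}\oplus\cX^{(0)}}$: you must lift them through $W=\sbm{ V_1 & 0 \\ 0 & I }$ to operators $P_k$ with $W^*P_kW=\bV_k$ and $\sum_k P_k=I$, and the naive lifting (define $\langle P_kW\xi,W\eta\rangle:=\langle \bV_k\xi,\eta\rangle$ on the dense range of $W$, using $0\le \bV_k\le W^*W$) produces only positive contractions, not orthogonal projections. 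On the other hand, your (6)$\Rightarrow$(5) ``by reversing the algebra of Theorem \ref{T:HA-Pi''} with $T=0$, $V_0=0$, $R=0$'' genuinely needs the $P_k$ to be orthogonal projections: the step \eqref{compute1} rests on the identity $P\bigl(\tfrac{w-1}{w+1}\bigr)=(P(w)-I)(P(w)+I)^{-1}$, which holds for a spectral decomposition but fails for a general positive one (the $P_k$ need not commute or be idempotent), and \eqref{DdHerreal3} is defined in the paper only for spectral decompositions. So whichever way you interpret (6), one of your two implications breaks. (A minor additional point: invoking Remark \ref{rem:pencil-normal} in (3)$\Rightarrow$(6) is redundant, since statement (3) already postulates \eqref{HApen2}.)

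The gap is fillable, but it needs an idea you have not named: a Naimark-type dilation that produces a \emph{spectral} decomposition while preserving the Schur complement. Concretely, factor $\bV_k=Q_k^*Q_k$ with $Q_k\colon \cU\oplus\cX\to\cZ_k$, set $\iota=\operatorname{Col}_{k}(Q_k)=\begin{bmatrix}\iota_1 & \iota_2\end{bmatrix}$, observe that $\iota^*\iota=\sbm{ V_1^*V_1 & 0 \\ 0 & I }$ forces $\iota_2$ to be an isometry with $\operatorname{Ran}\iota_1\perp\operatorname{Ran}\iota_2$, and take $\cX^{(0)}=\operatorname{Ran}\iota_2$, $\cX^{(1)}$ its orthogonal complement in $\bigoplus_k\cZ_k$, $V_1=\iota_1$, and $P_k$ the coordinate projections; since the $\cX$-corner is embedded isometrically, $P_{k,00}$ identifies with $\bV_{k,22}$, the off-diagonal blocks compress correctly through $V_1$, and the formula \eqref{E:Bessm1} survives. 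With that in place your chain works. The paper avoids the issue altogether by proving (4)$\Rightarrow$(5) instead: the self-adjoint unitary colligation of (4) already carries a spectral decomposition, Remark \ref{R:HAdisk} transports it into \eqref{DdHerreal3} with $U=\bA-\bB(I-\bD)^{-1}\bB^*$ and $V=\tfrac{1}{\sqrt 2}\bB$, and the only remaining work is the identity $-I=\bA_{00}+\bB_0(I-\bD)^{-1}\bB_0^*$ forcing $\bB_0=0$, i.e., $\operatorname{Ran}V\subset\cX^{(1)}$.
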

\begin{proof}
The equivalence of statements (1), (2), (3), and (4) has been
proved in \cite{K-V}.

(5)$\Rightarrow$(6) has been shown above, in the first paragraph
of Special case 4. This is an application of the construction in
the proof of Theorem \ref{T:HA-Pi''} to this special case.

 (6)$\Rightarrow$(3) is obvious.

 (4)$\Rightarrow$(5). The function $F$ is related to
 $S=\mathcal{C}(f)$ as in \eqref{E:F-S}. Let $\bU=\begin{bmatrix}
 \bA & \bB\\
 \bC & \bD
 \end{bmatrix}$ be a unitary colligation matrix
 providing the transfer-function realization for $S$ as in (4),
 i.e., $\bU^*=\bU$, together with the spectral decomposition of $I_{\cX}$, $P(\zeta)=\zeta_1P_1+\cdots+\zeta_dP_d$.
  As it was shown in Remark \ref{R:HAdisk}, $F$
 has a representation \eqref{DdHerreal2} with skew-adjoint $R$, unitary
 $U=\bU_0^*=\bA^*-\bC^*(I-\bD^*)^{-1}\bB^*$
  and
$V=\frac{1}{\sqrt{2}}\bB$. Now we also have that
$R=R^*=-R^*$, which means that $R=0$, and that
$U=U^*=\bA-\bB(I-\bD)^{-1}\bB^*$. It follows that the space $\cX$
has an orthogonal decomposition $\cX=\cX^{(1)}\oplus\cX^{(0)}$,
where $\cX^{(1)}$ is the 1-eigenspace of $U$ and $\cX^{(0)}$ is
the (-1)-eigenspace of $U$. With respect to this decomposition,
let us write
$$U=\bU_0=\begin{bmatrix}
I & 0\\
 0 & -I
\end{bmatrix},\quad
  \bA=\begin{bmatrix}
\bA_{11} & \bA_{10}\\
\bA_{01} & \bA_{00}
\end{bmatrix},\quad \bB=\begin{bmatrix}
\bB_1\\
\bB_0
\end{bmatrix}.$$
Since $\bA_{00}$ is a self-adjoint and contractive, the identity
$$-I=\bA_{00}+\bB_0(I-\bD)^{-1}\bB_0^*$$
is possible only if $\bB_0=0$. Thus ${\rm im}\,V\subseteq
\cX^{(1)}$, which completes the proof.
\end{proof}

 \smallskip

 \noindent
 \textbf{Special case 5: the single-variable case.}
 When we specialize
      Theorem \ref{T:HA-Pi''} to the single-variable case, some
      simplifications occur.  In particular the Herglotz pencil
      \eqref{A(w)} involves only two operators, namely the flip of a
      $\Pi$-impedance-conservative system node which can be assumed
      to be the canonical form \eqref{A0}
      $$  \bU_{0} = \begin{bmatrix} R - V_{0}^{*}T V & V_{0}^{*}
      (I+T) \\ -(I+T)^{*} V_{0} & -T \end{bmatrix},
      $$
      along with a single positive operator necessarily of the
      diagonal form
      $$
      \bU_{1} = \begin{bmatrix} V_{1}^{*} V_{1} & 0 \\ 0 & I
  \end{bmatrix}.
  $$
  Then the pencil $\bU(w)$ has the form
  $$
  \bU(w) = \begin{bmatrix}  R - V_{0}^{*}T V_{0} & V_{0}^{*} (I+T) \\
  -(I+T)^{*} V_{0} & -T \end{bmatrix} + w \begin{bmatrix}  V_{1}^{*}
  V_{1} & 0 \\ 0 & I \end{bmatrix}
  $$
  and the Bessmertny\u{\i} transfer-function realization for $f \in
  \cH(\Pi, \cL(\cU))$ becomes
  \begin{equation}   \label{Bess-real}
  f(w) = R + w V_{1}^{*} V_{1} +  V_{0}^{*}(-T + (I+T) (wI - T)^{-1} (I+T)^{*} ) V_{0}.
  \end{equation}
  For simplicity, let us now assume that $\cU = {\mathbb C}$ (so $f$
  is scalar-valued). Then $R \in {\mathbb C}$ is just a purely
  imaginary number and $V_{0}^{*} V_{0}$ is an operator on ${\mathbb
  C}$ and so can be identified with a nonnegative real number
  $\alpha$ (the image of the operator $V_{0}^{*} V_{0}$ acting on $1 \in {\mathbb
  C}$).  From the representation \eqref{Bess-real}, we see that there
  is no harm in cutting the state space $\cX^{(0)}$ (on which $T$ is
  acting) down to the smallest subspace reducing for $T$ which
  contains the range of the rank-1 operator $V_{0}$, i.e., we may
  assume that $V_{0} \cdot 1$ is a cyclic vector for $T$.
    Then the spectral theorem tells us that there is a
  measure $\nu$ on the imaginary line $i{\mathbb R}$ so that $T$ is
  unitarily equivalent to
  $$
    T= M_{-\zeta} \colon f(\zeta) \mapsto -\zeta f(\zeta)
  $$
  acting on $L^{2}(\nu)$.  Without loss of generality we take the
  cyclic vector $V_{0} \cdot 1$ to be the function $\frac{1}{1 -
  \zeta}$.  As this function must be in $L^{2}(\nu)$, we conclude that
   $\frac{1}{1-\zeta} \in L^{2}(\nu)$,
  i.e., that $\frac{1}{1 + |\zeta|^{2}} {\tt d}\nu(\zeta)$ is
  a finite measure.  When this is done then we see that the adjoint
  operator $V_{0}^{*} \colon L^{2}(\nu) \mapsto {\mathbb C}$ is given
  by
  $$
  V_{0}^{*} \colon f(\zeta) \mapsto \int_{i{\mathbb R}}
  \frac{1}{1-\zeta} f(\zeta)  \, {\tt d}\nu(\zeta).
  $$
  Then the Bessmertny\u{\i} realization
  \eqref{Bess-real} for $f(w)$  collapses to the integral
  representation formula
  \begin{equation*}
  f(w)   = \alpha w + R + \int_{i {\mathbb R}} \left[ \frac{\zeta}{1 +
  |\zeta|^{2}} + \frac{1}{\zeta + w} \right] \, {\tt d} \nu(\zeta).
  \end{equation*}
  This agrees with the classical Nevanlinna integral representation
  for holomorphic functions taking the right halfplane into itself.
  Actually the formula is usually stated for holomorphic functions
  taking the upper halfplane into itself (see \cite[Theorem 1 page
  20]{Donoghue}); however the correspondence $\widetilde f(\omega)
  \mapsto f(w):= -i \widetilde f(iw)$ between $\widetilde f$ in the Nevanlinna class and $f$ in the
 Herglotz class enables one to easily convert one integral
 representation to the other.  We also point out that our proof
 (starting with the Herglotz representation \eqref{DdHerreal2} for
 the Herglotz function $F(\zeta)$ on the disk and then separating out
 the $1$-eigenspace of the unitary operator $U$ in that
 representation) is just an operator-theoretic analogue of the proof
 of the integral representation formula in \cite{Donoghue}, where one
 starts with the integral Herglotz representation
 $$
 F(\zeta) = R + \int_{\mathbb T} \frac{ t + \zeta}{t - \zeta}\, {\tt d} \mu(t)
 $$
 and then separates out any point mass of $\mu$ at the point $1$ on
 the circle.

 \begin{remark}  \label{R:TullyDoyle} {\em  In recent work
     \cite{ATDY1}, Agler--Tully-Doyle--Young  obtain a realization
     formula for the most general scalar-valued Nevanlinna--Agler
     function on the upper polyhalfplane.   It is a straightforward matter to adjust the
     formulas to the right polyhalfplane setting which we have here
     and to extend the results to the operator-valued case.  The
     result amounts to combining our formulas \eqref{fform},
     \eqref{M1} and \eqref{N1}, i.e.,
     \begin{align}
 f(w) =  & R + V^{*} \left[\begin{matrix} I & 0 \\ 0 & -(I+T)
    \end{matrix}\right]  \left( P(w) \left[ \begin{matrix}
    0 & 0 \\ 0 & I \end{matrix} \right] + \left[
    \begin{matrix} I & 0 \\ 0 & -T \end{matrix} \right]
    \right)^{-1}  \cdot  \notag \\
& \quad \cdot   \left( -P(w) \left[ \begin{matrix} I & 0 \\ 0 & -T
    \end{matrix} \right] + \left[ \begin{matrix} 0 & 0 \\ 0
    & I \end{matrix} \right] \right) \left[ \begin{matrix}
    I & 0 \\ 0 & -(I-T)^{-1} \end{matrix} \right] V.
    \label{ATDY}
  \end{align}
  However the associated Bessmertny\u{\i} pencil
  $$\bV(w) = \begin{bmatrix} R &
  V^{*} \left[ \begin{smallmatrix} I
  & 0 \\ 0 & -(I+T) \end{smallmatrix} \right] \\
  ( -P(w) \left[ \begin{smallmatrix} I & 0 \\ 0 & -T \end{smallmatrix}
  \right] + \left[ \begin{smallmatrix} 0 & 0 \\ 0 & I
\end{smallmatrix} \right] ) \left[ \begin{smallmatrix} I & 0 \\ 0 &
-(I-T)^{-1} \end{smallmatrix} \right] V  & P(w) \left[ \begin{smallmatrix} 0 & 0 \\
0 & I \end{smallmatrix} \right] + \left[ \begin{smallmatrix} I & 0 \\
0 & -T \end{smallmatrix} \right] \end{bmatrix}
$$
lacks the symmetry properties of what we are calling a
Herglotz--Agler operator pencil (see Definition \ref{D:HApencil}).
Hence there is no easy analogue of the proof of (3) $\Rightarrow$
(2) in Theorem \ref{T:HA-Pi''} and it is not all transparent from
the presentation \eqref{ATDY} why the resulting function
\eqref{ATDY} has a $\Pi^d$-Herglotz--Agler decomposition
\eqref{PiHerAgdecom};  indeed, it takes several pages of
calculations in \cite{ATDY1} (see Propositions 3.4 and 3.5 there)
to arrive at this result.

There are other results in \cite{ATDY1} and \cite{ATDY2} using
realization theory to characterize various types of boundary
behavior of the function $f$ at  infinity; we do not go into this
topic here. }\end{remark}

\end{document}